\documentclass[11pt,letterpaper]{amsart}
\usepackage[usenames,dvipsnames]{color}
\usepackage{amsthm,amsfonts,amssymb,amsmath,amsxtra}
\usepackage[all]{xy}
\SelectTips{cm}{}
\usepackage{xr-hyper}
\usepackage[pdfpagelabels,pdftex,hidelinks]{hyperref}
\hypersetup{
  colorlinks   = true, %Colours links
  urlcolor     = blue, %Colour for external hyperlinks
  linkcolor    = blue, %Colour of internal links
  citecolor   = blue, %Colour of citations
pdftitle={},
  pdfauthor={},
  pdfsubject={},
  pdfkeywords={},
  breaklinks=true,
  bookmarksopen=true,
  bookmarksnumbered=true,
  pdfpagemode=UseOutlines,
  plainpages=false}
\usepackage{cleveref}

\usepackage{verbatim}
\usepackage{caption}

\usepackage{lineno}
\usepackage{mathrsfs}

\RequirePackage{xspace}
\RequirePackage{etoolbox}
\RequirePackage{varwidth}
\RequirePackage{enumitem}
\RequirePackage{mathtools}
\RequirePackage{longtable}
\RequirePackage{multirow}

\def\ge{\geqslant}
\def\le{\leqslant}
\def\a{\alpha}
\def\b{\beta}
\def\g{\gamma}
\def\G{\Gamma}
\def\d{\delta}
\def\D{\Delta}
\def\L{\Lambda}
\def\e{\epsilon}

\def\t{\tau}
\def\th{\theta}
\def\k{\kappa}
\def\l{\lambda}

\def\i{^{-1}}

\def\<{\langle}
\def\>{\rangle}

\newcommand{\BA}{\ensuremath{\mathbb {A}}\xspace}

\newcommand{\BF}{\ensuremath{\mathbb {F}}\xspace}
\newcommand{{\BG}}{\ensuremath{\mathbb {G}}\xspace}
\newcommand{\BH}{\ensuremath{\mathbb {H}}\xspace}
\newcommand{\BI}{\ensuremath{\mathbb {I}}\xspace}

\newcommand{{\BK}}{\ensuremath{\mathbb {K}}\xspace}
\newcommand{\BL}{\ensuremath{\mathbb {L}}\xspace}

\newcommand{\BQ}{\ensuremath{\mathbb {Q}}\xspace}
\newcommand{\BR}{\ensuremath{\mathbb {R}}\xspace}

\newcommand{\BT}{\ensuremath{\mathbb {T}}\xspace}

\newcommand{\BZ}{\ensuremath{\mathbb {Z}}\xspace}

\newcommand{\CC}{\ensuremath{\mathcal {C}}\xspace}

\newcommand{\CE}{\ensuremath{\mathcal {E}}\xspace}

\newcommand{\CG}{\ensuremath{\mathcal {G}}\xspace}
\newcommand{\CH}{\ensuremath{\mathcal {H}}\xspace}
\newcommand{\CI}{\ensuremath{\mathcal {I}}\xspace}

\newcommand{\CK}{\ensuremath{\mathcal {K}}\xspace}
\newcommand{\CL}{\ensuremath{\mathcal {L}}\xspace}

\newcommand{\CO}{\ensuremath{\mathcal {O}}\xspace}

\newcommand{\CR}{\ensuremath{\mathcal {R}}\xspace}

\newcommand{\CT}{\ensuremath{\mathcal {T}}\xspace}

\newcommand{\Ad}{{\mathrm{Ad}}}

\newcommand{\GL}{\mathrm{GL}}

\DeclareMathOperator{\Hom}{Hom}

\let\Im\relax
\DeclareMathOperator{\Im}{Im}

\newcommand{\red}{\ensuremath{\mathrm{red}}\xspace}

\newcommand{\Sp}{{\mathrm{Sp}}}

\DeclareMathOperator{\tr}{tr}

\newcommand{\ov}{\overline}

\def\tw{\tilde w}

\def\pr{{\rm pr}}
\def\tPhi{\widetilde \Phi}

\def\der{{\rm der}}
\def\ov{\overline}

\def\sc{{\rm sc}}

\def\unip{{\rm unip}}

\def\tG{{\tilde G}}
\def\tT{{\tilde T}}

\def\tG{{\tilde G}}
\def\tB{{\tilde B}}
\def\tphi{{\tilde \phi}}
\def\tth{{\tilde \theta}}
\def\tiota{{\tilde \iota}}
\def\tCI{{\tilde\CI}}
\def\tCO{{\tilde\CO}}
\def\tCC{{\tilde\CC}}
\def\te{{\tilde \epsilon}}

\def\tX{{\tilde X}}

\def\tSigma{{\tilde \Sigma}}
\def\hCO{{\hat \CO}}
\def\hSigma{{\hat \Sigma}}
\def\hpsi{{\hat \psi}}
\def\hXi{{\hat \Xi}}
\def\tBK{{\tilde{\mathbb K}}}
\def\tBL{{\tilde{\mathbb L}}}
\def\tBT{{\tilde{\mathbb T}}}
\def\tL{{\tilde L}}

\newtheorem{theorem}{Theorem}
\newtheorem{proposition}[theorem]{Proposition}
\newtheorem{lemma}[theorem]{Lemma}

\newtheorem{corollary}[theorem]{Corollary}

\theoremstyle{definition}

\numberwithin{equation}{section}
\numberwithin{theorem}{section}

\setitemize[0]{leftmargin=*,itemsep=\the\smallskipamount}
\setenumerate[0]{leftmargin=*,itemsep=\the\smallskipamount}

\renewcommand{\to}{%
   \ifbool{@display}{\longrightarrow}{\rightarrow}%
   }
\let\shortmapsto\mapsto
\renewcommand{\mapsto}{%
   \ifbool{@display}{\longmapsto}{\shortmapsto}%
   }
\newlength{\olen}
\newlength{\ulen}
\newlength{\xlen}
\newcommand{\xra}[2][]{%
   \ifbool{@display}%
      {\settowidth{\olen}{$\overset{#2}{\longrightarrow}$}%
       \settowidth{\ulen}{$\underset{#1}{\longrightarrow}$}%
       \settowidth{\xlen}{$\xrightarrow[#1]{#2}$}%
       \ifdimgreater{\olen}{\xlen}%
          {\underset{#1}{\overset{#2}{\longrightarrow}}}%
          {\ifdimgreater{\ulen}{\xlen}%
             {\underset{#1}{\overset{#2}{\longrightarrow}}}
             {\xrightarrow[#1]{#2}}}}%
      {\xrightarrow[#1]{#2}}
   }
\makeatother
\newcommand{\xyra}[2][]{%
   \settowidth{\xlen}{$\xrightarrow[#1]{#2}$}%
   \ifbool{@display}%
      {\settowidth{\olen}{$\overset{#2}{\longrightarrow}$}%
       \settowidth{\ulen}{$\underset{#1}{\longrightarrow}$}%
       \ifdimgreater{\olen}{\xlen}%
          {\mathrel{\xymatrix@M=.12ex@C=3.2ex{\ar[r]^-{#2}_-{#1} &}}}%
          {\ifdimgreater{\ulen}{\xlen}%
             {\mathrel{\xymatrix@M=.12ex@C=3.2ex{\ar[r]^-{#2}_-{#1} &}}}
             {\mathrel{\xymatrix@M=.12ex@C=\the\xlen{\ar[r]^-{#2}_-{#1} &}}}}}%
      {\mathrel{\xymatrix@M=.12ex@C=\the\xlen{\ar[r]^-{#2}_-{#1} &}}}%
   }
\makeatletter
\newcommand{\xla}[2][]{%
   \ifbool{@display}%
      {\settowidth{\olen}{$\overset{#2}{\longleftarrow}$}%
       \settowidth{\ulen}{$\underset{#1}{\longleftarrow}$}%
       \settowidth{\xlen}{$\xleftarrow[#1]{#2}$}%
       \ifdimgreater{\olen}{\xlen}%
          {\underset{#1}{\overset{#2}{\longleftarrow}}}%
          {\ifdimgreater{\ulen}{\xlen}%
             {\underset{#1}{\overset{#2}{\longleftarrow}}}
             {\xleftarrow[#1]{#2}}}}%
      {\xleftarrow[#1]{#2}}
   }
\newcommand{\isoarrow}{%
   \ifbool{@display}{\overset{\sim}{\longrightarrow}}{\xrightarrow\sim}%
   }

\newcommand{\sm}{{\,\smallsetminus\,}}

\usepackage{upgreek}
\makeatletter
\newcommand{\colim@}[2]{%
  \vtop{\m@th\ialign{##\cr
    \hfil$#1\operator@font lim$\hfil\cr
    \noalign{\nointerlineskip\kern1.5\ex@}#2\cr
    \noalign{\nointerlineskip\kern-\ex@}\cr}}%
}
\newcommand{\colim}{%
  \mathop{\mathpalette\colim@{\rightarrowfill@\textstyle}}\nmlimits@
}
\newcommand{\prolim@}[2]{%
  \vtop{\m@th\ialign{##\cr
    \hfil$#1\operator@font lim$\hfil\cr
    \noalign{\nointerlineskip\kern1.5\ex@}#2\cr
    \noalign{\nointerlineskip\kern-\ex@}\cr}}%
}
\newcommand{\prolim}{%
  \mathop{\mathpalette\colim@{\leftarrowfill@\textstyle}}\nmlimits@
}
\makeatother

\begin{document}
\title[]{Symmetric spaces over a finite ring} 

\author[Ben Liu]{Ben Liu}
\address{Academy of Mathematics and Systems Science, Chinese Academy of Sciences, Beijing 100190, China}
\email{liubenmath@gmail.com}

\author[Sian Nie]{Sian Nie}
\address{Academy of Mathematics and Systems Science, Chinese Academy of Sciences, Beijing 100190, China}

\address{School of Mathematical Sciences, University of Chinese Academy of Sciences, Chinese Academy of Sciences, Beijing 100049, China}
\email{niesian@amss.ac.cn}

\begin{abstract}
Deep level Deligne-Lusztig representations, as natural extensions of the classical Deligne-Lusztig representations, have recently seen various applications in the Langlands program. In this note, we address their distinction problem for symmetric pairs over finite fields, yielding a generalization of a classical result by Lusztig.
\end{abstract}

\maketitle

\section{Introduction}
In \cite{Lu79} and \cite{Lu04}, Lusztig introduced the notion of deep level Deligne-Lusztig representations for Lie groups over finite rings. Such objects extends the classical construction of Deligne and Lusztig \cite{DL76} in the finite field setting, and they are expected to provide a geometric interpretation of supercuspidal representations of $p$-adic fields. In the past two decades, the study of deep level Deligne-Lusztig representations has been undergoing rapid development. We refer to \cite{BW16}, \cite{Chan20}, \cite{CI23}, \cite{CS23}, \cite{Iva23} \cite{CO25a}, \cite{BC24}, \cite{Chan24}, \cite{Nie24}, \cite{IN25}, \cite{CO25b}, \cite{INY25}, \cite{Tak26} and references therein for their remarkable properties and applications in Langlands program. 

\subsection{The main result}   
In this paper we study the distinction problem on deep level Deligne-Lusztig representations for symmetric pairs. More precisely, let $G$ be a connected reductive group with an involution defined over $\BF_q$. Consider an involution $\th$ of $G$ defined over $\BF_q$ and denote by $G^\th$ the $\th$-fixed point subgroup of $G$. 

For $r \in \BZ_{\ge 0}$ let $\CO_r = \BF_q[t]/(t^{r+1})$ be a finite ring over $\BF_q$. Following \cite{Nie24}, to each $\BF_q$-rational maximal torus $T \subseteq G$ and a character $\phi: T(\CO_r) \to \ov\BQ_\ell^\times$ (with $\ell$ a fixed prime not dividing $q$), one can associate a virtual representation $\CR_{T_r}^{G_r}(\phi)$ of $G(\CO_r)$, which is referred to a deep level Deligne-Lusztig representation. The main purpose of this paper is to compute the multiplicity of the trivial representation of $G^\th(\CO_r)$ in $\CR_{T_r}^{G_r}(\phi)$: \[\<\CR_{T_r}^{G_r}(\phi), 1\>_{G^\th(\CO_r)} = \frac{1}{|G^\th(\CO_r)|} \sum_{g \in G^\th(\CO_r)} \tr(g; \CR_{T_r}^{G_r}(\phi)).\] To this end, we consider the set $\Theta_T$ of elements $w \in G(\BF_q)$ such that the conjugate involution $\th_w := \Ad(w) \th \Ad(w)\i$ fixes $T$, and associated to each $w \in \Theta_T$ a quadratic character $\e_w: T^{\th_w}(\CO_r) \to \{\pm 1\}$ as in \S\ref{sec:proof-main}. Let $\Theta_\phi$ be the set of elements $w \in \Theta_T$ such that $\phi|_{T^{\th_w}(\CO_r)} = \e_{\phi, w}$.

The main result is as follows.
\begin{theorem} [cf. Theorem \ref{formula}] \label{into-main}
    We have \[\<\CR_{T_r}^{G_r}(\phi), 1\>_{G^\th(\CO_r)} = \sum_{w \in T(\BF_q) \backslash \Theta_\phi / G^\th(\BF_q)} (-1)^{n_{\phi, w}},\] where the integers $n_{\phi, w}$ are defined using the $\th_w$-action on the Howe filtration of $\phi$, see \S \ref{sec:proof-main}.
\end{theorem}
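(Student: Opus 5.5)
We follow Lusztig's argument in \cite{Lu90} for the classical case, adapted to the deep-level setting via the Howe factorization of $\phi$.

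\textbf{Step 1: reduce to a count of orbits.} The multiplicity is computed geometrically from the deep level Deligne--Lusztig variety $X$ of \cite{Nie24}, on which $G(\CO_r)$ acts on the left and $T(\CO_r)$ on the right. We have
\[\<\CR_{T_r}^{G_r}(\phi),1\>_{G^\th(\CO_r)} = \sum_i (-1)^i \dim H^i_c\bigl(G^\th(\CO_r)\backslash X,\ov\BQ_\ell\bigr)_{\phi}.\]
So we must understand the quotient of $X$ by $G^\th(\CO_r)$, equivalently the double cosets $G^\th\backslash G/B$ at level $r$. Over $\bar\BF_q$, the orbits of $G^\th$ on pairs (Borel, torus) are classified by the elements $w$ making $\th_w$ stabilize $T$. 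This gives a decomposition of $X$ into locally closed pieces $X_w$, indexed by $T(\BF_q)\backslash\Theta_T/G^\th(\BF_q)$ once Lang's theorem is applied to $G^\th$. The $\BF_q$-structure of the reduction mod $t$ carries this decomposition, and the deeper levels only add affine fibrations (the kernel of reduction is unipotent).

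\textbf{Step 2: compute each piece.} Fix $w$ and put $H = T^{\th_w}$. The $G^\th(\CO_r)$-quotient of $X_w$ should be, up to affine-space bundles, a variety $Y_w$ with a right action of $T(\CO_r)$ through which $H(\CO_r)$ acts via a character. Its $\phi$-isotypic Euler characteristic is nonzero only if $\phi|_{H(\CO_r)}$ equals the character $\e_w$ by which $H(\CO_r)$ acts on the top cohomology of the fibers. In the classical case this is the sign character of Lusztig, coming from the orientation of root spaces permuted by $\th_w$. When $\phi$ does restrict correctly, the contribution is $\pm1$, and the sign is $(-1)$ raised to the dimension of the affine fibration mod 2.

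\textbf{Step 3: the deep-level sign.} To pin down the character $\e_{\phi,w}$ and the integer $n_{\phi,w}$, write $\phi=\prod_i \phi_i$ via the Howe filtration, with a twisted Levi sequence $T\subseteq G^0\subseteq\cdots\subseteq G^d=G$. At depth $j$, the root subgroups of $G^{i}\setminus G^{i-1}$ at the corresponding filtration levels contribute affine coordinates. A $\th_w$-orbit of roots of size two contributes an even dimension. A $\th_w$-fixed root contributes either to a symmetric-space piece (classical Lusztig sign) or, when $\th_w$ acts by $-1$ on the root space, a factor of $-1$. Collecting these contributions across the Howe levels gives $n_{\phi,w}$. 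Using the Cayley/Moy--Prasad isomorphism to identify the higher-depth pieces with Lie algebra pieces, one then applies a Fourier transform on the quadratic form defined by the generic part of $\phi_i$. This is a standard Gauss-sum computation, which produces the quadratic character on the Lie-algebra part of $H$.

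\textbf{Main obstacle.} The hard part is Step 3: showing that the $\phi$-isotypic cohomology of the fibers at positive depth is one-dimensional, concentrated in a single degree, with $H(\CO_r)$ acting by exactly $\e_{\phi,w}$. For this I would first try induction on $r$, using the exact sequence $1\to \mathfrak g(\BF_q)\to G(\CO_r)\to G(\CO_{r-1})\to 1$. At each step, a character-sum (Gauss-sum) evaluation over the $\th_w$-antifixed Lie algebra should reduce the computation to level $r-1$, with either a vanishing result or a sign. Summing over $w$ then yields the theorem.
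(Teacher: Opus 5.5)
Your outline has a genuine gap in Step 1, and the positive-depth computation you flag as the main obstacle is not actually supplied.

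In Step 1 you pass from $G^\th(\CO_r)\backslash X$ to an orbit decomposition ``once Lang's theorem is applied to $G^\th$''. But $G^\th$ is in general disconnected (e.g.\ $O_n \subset \GL_n$), and so is $G_r^\th$, whose component group is that of $G^\th$ because $G^\th_{0+:r}$ is connected unipotent. The map $h \mapsto (F(h)\i h, h\i G_r^\th)$ then does not identify $(G_r^\th)^F \backslash X$ with $\{(x,\g) \in F\CI_{\L,r}^\dag \times G_r/G_r^\th;\ F(\g) = x\g\}$. Moreover $(G_r/G_r^\th)^F \neq G_r^F/(G_r^\th)^F$, so counting over geometric orbits does not produce the sum over $T(\BF_q)\backslash\Theta_\phi/G^\th(\BF_q)$.

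The paper's proof of this statement (Theorem~\ref{formula}) is exactly the device that circumvents this, in three steps.
First, the formula is proved for $\tG_r = G_{0+:r} \rtimes (G_\sc)_0$ with a lifted involution $\tth$, where $\tG_r^\tth$ is connected (Proposition~\ref{intertwine}, Theorem~\ref{tilde-formula}).
Second, summing over characters with fixed restriction to $T_{0+:r}^F$ gives the unipotent sums of Green functions (Corollary~\ref{sum-unip}). These transfer to $G_r$ because $\tX/\tT_0^F \cong X/T_0^F$ (Corollary~\ref{mod-T}).
Third, the multiplicity for $G_r$ is reassembled from the character formula (Theorem~\ref{Green}). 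This applies the unipotent result to $(Z^\circ(t), \th_{x\i})$ via Lemma~\ref{stable} and uses orthogonality against $\e_w$ on $(T_0^{\th_w})^F$.
Nothing in your plan plays this role.

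In Step 2 and your proposed induction on $r$, the mechanism by which depth enters is missing. The pieces are not affine bundles over a variety on which $T^{\th_w}(\CO_r)$ acts by a character.
\begin{enumerate}
\item Some pieces vanish for a different reason. These are the pieces with $\th_w(\Phi_{G^i}) \neq \Phi_{G^i}$, and the strata $\tSigma_w^{i,f}$ with a nonzero coordinate at an affine root $f \in E_{w,i}$ of $G^i \setminus G^{i-1}$ of depth at most $s_{i-1} = r_{i-1}/2$. Their $\tphi$-part vanishes only because the $\tT_r^F$-action extends to a connected group $H_{\a_f, r_{i-1}}$ (Lemmas~\ref{conjugation}, \ref{extension}), while $\tphi$ is nontrivial on $M_{\a_f, r_{i-1}} \subseteq H_{\a_f, r_{i-1}}^\circ$ by genericity.
\item What survives lives on the Heisenberg-type quotient $\tilde\CK_{\L,r}/\CE_{\L,r}$ at the integral half-depths $s_{i-1}$. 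A Cartesian square with the Lang map of $\BT^+$ turns its cohomology into $R\G_c(h^*\CL_+)$ for the commutator pairing. Proposition~\ref{criterion} then removes everything except, up to an even shift, the roots with $\th_w(\a) = -\a$, on which the form is $\sum_\a d_\a z_\a^2$. Proposition~\ref{square} makes each such root contribute a one-dimensional complex in degree one.
\item The sign and the condition $\phi|_{(T_r^{\th_w})^F} = \e_w$ come from running this on $t$-fixed points for $t \in \tT_0^F$. One invokes Lusztig's level-zero Euler characteristic (Theorem~\ref{Euler-zero}) and sums over $(\tT_0^{\tth_w})^F$.
\end{enumerate}

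A depth-by-depth induction along $\mathfrak g(\BF_q) \to G(\CO_r) \to G(\CO_{r-1})$ does not see that a depth-$r_{i-1}$ generic character acts through root subgroups at depth $r_{i-1}/2$. Relatedly, your attribution of the sign to $\th_w$-fixed roots is off. The integer $n_{\phi,w}$ counts roots in $\Phi_{G^i}^+ \setminus \Phi_{G^{i-1}}$ at depth $s_{i-1}$ with $\th_w(\a) = -\a$ and $F\i(\a) < 0$.
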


When $T$ is elliptic, it is proved in \cite{Nie24} that original deep level Deligne-Lusztig representations $R_{T_r}^{G_r}(\phi)$ constructed by Lusztig coincide with the representations $\CR_{T_r}^{G_r}(\phi)$ used in this paper. Hence Theorem \ref{into-main}  also provides a multiplicity formula for $R_{T_r}^{G_r}(\phi)$ in the elliptic case. 

\smallskip

When $r= 0$, that is, $\CR_{T_r}^{G_r}(\phi)$ is a classical Deligne-Lusztig representation, Theorem \ref{into-main} is due to Lusztig \cite{Lu90}. Our proof follows the general strategy of Lusztig. The major innovation in our computation is the reduction procedure from positive depths to the classical zero depth. A remarkable new phenomenon  is that the contributions of positive depths to the multiplicity are encoded in the complexes  \[R\G_c(\BG_a, f^*\CL),\] where $\CL$ is an Artin-Schreier sheaf and $f: \BG_a \to \BG_a, \ z \mapsto z^2$ is the square map. By coincidence, this complex happens to be concentrated at degree one with dimension one. This key property plays an essential role in establishing the multiplicity formula. 

\smallskip

It is an interesting problem to consider the more general case that $G(\CO_r) = \CG(\CO_r)$, where $\CG$ is a parahoric group model over the integer ring $\CO$ of a $p$-adic field. We remark that our approach still works for the symmetric pairs $(\CG, \CG^\th)$ such as $(\GL_n, \GL_m \times \GL_{n-m})$, $(\GL_{2n}, \Sp_{2n})$ and $(\CG \times \CG, \CG)$. 

\smallskip

In \cite{HM}, Hakim and Murnaghan studied the distinction problem on supercuspidal representations of $p$-adic groups for symmetric pairs in the framework of Yu \cite{Yu}. On the other hand,  supercuspidal representations are closely related to deep level Deligne-Lusztig representations (see \cite{Nie24} and \cite{CO25b}). We hope the method of this paper would provide a geometric interpretation of the work \cite{HM}.

\subsection{Structure of the paper}
The paper is organized as follows. In \S\ref{sec:pre}, we collect basic definitions and properties on the truncated Moy-Prasad algebraic group $G_r$ and its symmetric quotient space $G_r/G_r^\th$. In \S\ref{sec:pDL}, we recall the construction of deep Deligne-Lusztig representations $\CR_{T_r}^{G_r}(\phi)$ and a trace formula  using thr Green functions. In \S\ref{sec:proof-main} we present the multiplicity formula and outline its proof. We show that the multiplicity formula on $\CR_{T_r}^{G_r}(\phi)$ follows from that on the representation $\CR_{\tT_r}^{\tG_r}(\tphi)$. Here $\tG_r \to G_r$ is a group isogeny 
equipped with a lifting involution $\tth$ of $\th$ such that $\tG_r^\tth$ is connected, and $\CR_{\tT_r}^{\tG_r}(\tphi)$ is the deep level Deligne-Lusztig representation associated to the natural lift $(\tT, \tphi)$ of the pair $(T, \phi)$. To handle the representation $\CR_{\tT_r}^{\tG_r}(\tphi)$, we use connectedness of $\tG_r^\tth$ to express the multiplicity in terms of cohomology of certain varieties $\tSigma_w$. In \S\ref{sec:vanish} and \S\ref{sec:first-reduction} we reduce the cohomology computation of $\tSigma_w$ to that of a much simpler variety $\Sigma_w^\BK$. In \ref{sec:coho-BK} and \ref{sec:proof-tilde}, we compute the cohomology of $\Sigma_w^\BK$ and finish the proof of the multiplicity formula on $\CR_{\tT_r}^{\tG_r}(\tphi)$.

\section{The symmetric quotient} \label{sec:pre}

Let $\BF_q$ be a finite field with $q$ odd. Denote by $\ov\BF_q[[\varpi]]$ the ring of formal power series over $\ov\BF_q$. Let $F$ be the Frobenius automorphism of $\ov\BF_q[[\varpi]]$ given by $\sum_i a_i \varpi^i \mapsto \sum_i a_i^q  \varpi^i$.

\subsection{Positive loop group}\label{sec:setup}
Let $G$ be a reductive group over $\BF_q$, whose Frobenius automorphism is still denoted by $F$. Put $\widetilde\BR_{\ge 0} = \BR_{\ge 0} \sqcup \{s+; s \in \BR_{\ge 0}\}$. For $r \in \widetilde\BR_{\ge 0}$ let $\CG^r$ (resp. $\CG^{r+}$) be the subgroup \[G(1 + \varpi^k \ov\BF_q[[\varpi]]) \subseteq G(\ov\BF_q[[\varpi]]),\] where $k = \min\{i \in \BZ; i \ge r\}$ (resp. $k = \min\{i \in \BZ; i >r\}$). For any $0 \le s \le r \in \widetilde\BR$ we put \[G_{s:r} = \CG^s / \CG^{r+},\] which is a smooth affine group scheme over $\ov\BF_q$. Note that $F$ acts naturally on $G_{s:r}$, equipping it with an $\BF_q$-rational structure. We put $G_r = G_{0:r} \cong G_{0+:r} \rtimes G_0$ with $G_0 = G(\ov\BF_q)$.

For a closed subgroup $H \subseteq G$ defined over $\ov\BF_q$, let  $H_{s:r} \subseteq H_r$ be the closed subgroups of $G_r$ defined in a similar way.

\subsection{Root System}\label{subsec:root}
Let $T \subseteq G$ be a fixed maximal torus over $\BF_q$. We write $W = N_G(T)/T$ for the Weyl group of $T$, $\Phi = \Phi(G, T)$ for the root system of $T$ in $G$, $G^\a \subseteq G$ for the root subgroup of $\a$, and $\tilde\Phi = \Phi \times \BZ \,\sqcup\, \BZ$ for the set of affine roots. For $\alpha \in \Phi$, we write $\alpha^\vee \colon \BG_m \to T$ for the corresponding coroot and $u_\a \colon \BG_a \to G^\a$ for a fixed $\ov\BF_q$-rational parametrization. We fix a basis $(\l_i)_{i=1}^m$ of the coweight lattice $X_*(T)$. Let $f \in \tPhi$. We write $\a_f \in \Phi \sqcup \{0\}$, $n_f \in \BZ$ such that $f = (\a_f,n_f) = \a_f + n_f$. If $\a_f = 0$ and $n_f \ge 1$, we define  \[u_f: \BA^m \to \CG, \quad (x_i)_i \mapsto \prod_i \l_i(1 + \varpi^{n_f} [x_i]),\] where $[x_i]$ denotes the Teichm\"{u}ller lift of $x_i \in \ov\BF_q$. If $\a_f \in \Phi$, we define \[u_f: \BG_a \to \CG, \quad x \mapsto u_{\a_f}(\varpi^{n_f} [x]).\] 

We fix an $\ov\BF_q$-rational Borel subgroup $B = T U \subseteq G$ with $U$ the unipotent radical. Denote by $\Phi_B \subseteq \Phi$ the set of roots in $B$. 

We define a linear order on $\tPhi_r$ such that $f < f'$ if either $n_f < n_{f'}$ or $n_f = n_{f'}$ and $\a_{f'} - \a_f$ is a sum of roots in $\Phi_B$. Let $\tPhi^{>0} = \{f \in \tPhi; f > 0\}$, which consists of affine roots $f \in \tPhi_r$ such that either $0 < \a_f$ or $\a_f = 0$ and $\a_f \in \Phi_B$.  

Let $r$ be a fixed non-negative integer. Let $\CI_r^+ \subseteq \CI_r$ be the inverse images of $U_0 \subseteq B_0$ respectively under the natural reduction map $G_r \to G_0$. We put $\tPhi_r^{>0} = \{f \in \tPhi^{> 0}; , n_f \le r\}$. Then $\CI_r^+ = \prod_{f \in \tPhi_r^{>0}} G_r^f$. Here $G_r^f$ is the natural image of $\CG^f$ in $G_r$.

\subsection{Involution of $G_r$}
Let $\th$ be an involution of $G_r$. For $x \in G_r$ we put \[x \cdot \th = \th_x = \Ad(x) \th \Ad(x\i)\] which is also an involution of $G_r$.

\begin{lemma} \label{solution}
    Let $A = \th(A) \subseteq G_{0+:r}$ be a subgroup such that either $A$ is finite or $A = G_{0+:r}$ or $A = T_{0+,r}$. Let $x \in A$ such that $x \th(x) = 1$. Then there exists $y \in A$ such that $x = y \th(y)\i$.
\end{lemma}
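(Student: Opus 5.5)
The plan is to use that $q$ is odd, so $2$ is invertible. Moreover, $G_{0+:r}$ (and any $\th$-stable subgroup of it) is a unipotent group; its finite subgroups have order a power of $p$, which is odd. This gives two parallel arguments, one for finite $A$ and one for the two algebraic cases.

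\emph{Case $A$ finite.} Then $|A|$ is a power of $p$, hence odd. Form the semidirect product $A \rtimes \<\th\>$, where $\th$ has order dividing $2$. The condition $x\th(x) = 1$ says exactly that $(x\th)^2 = 1$ in this group. So $x\th$ is an involution (or trivial) lying in the coset $A\th$. Since $|A|$ is odd, $\<\th\>$ is a Sylow $2$-subgroup, and by Sylow's theorem $x\th$ is conjugate to $\th$ by some $y \in A$. Writing $y\th y\i = y\th(y)\i\th$ and comparing with $x\th$ gives $x = y\th(y)\i$.

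\emph{Cases $A = G_{0+:r}$ or $A = T_{0+:r}$.} Here I would argue by induction along the filtration $A_{s} = A \cap G_{s:r}$ for $s = 1, \dots, r$. The successive quotients $A_s/A_{s+1}$ are $\th$-stable commutative unipotent groups; they are vector groups $V_s$ (a subspace of the Lie algebra $\mathfrak g$ of $G$, resp. of $\mathrm{Lie}(T)$) on which $\th$ acts linearly as an involution. On $V = V_s$ the cocycle condition for the image $\bar x$ reads $\bar x + \th(\bar x) = 0$. Since $2$ is invertible, $V = V^{+} \oplus V^{-}$, and $\bar x \in V^-$. Put $\bar y = \bar x/2$; then $\bar y - \th(\bar y) = \bar x$.

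Lift $\bar y$ to $y_1 \in A_s$ and set $x' = y_1\i x \th(y_1)$, i.e.\ replace $x$ by $y_1\i x\,\th(y_1\i)\i$. Then $x'$ again satisfies $x'\th(x') = 1$, since this condition is preserved by the twisted action $x \mapsto z x \th(z)\i$. Moreover $x' \in A_{s+1}$. Iterating up to $s = r$ terminates, because $A_{r+1}$ is trivial, and composing the $y$'s gives $x = y\th(y)\i$. For the base step one needs $x \in A_1 = A$, which is automatic since $A \subseteq G_{0+:r}$.

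The main point to check is that $\th$ preserves the filtration $A_s$, so that it induces a linear involution on each graded piece. For an involution of $G_r$ induced from $G$ this is clear. For a general automorphism of the group scheme $G_r$, I would argue that $G_{s:r}$ is characteristic: it is the kernel of the reduction map, which should be intrinsic, for instance via the structure of $G_{0+:r}$ as a unipotent group whose lower central/descending filtration is compatible with $s$. If needed, I would restrict to the involutions actually considered in the paper, which come from $G$. Alternatively, one can avoid the filtration altogether by viewing $x\th$ as an element of order $2$ in $A \rtimes \<\th\>$ over $\ov\BF_q$ and appealing to conjugacy of such semisimple elements modulo a connected unipotent normal subgroup when $p \neq 2$. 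This reduces to the same graded computation.
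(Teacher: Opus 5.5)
Your proof is essentially correct, but it follows a different and longer route than the paper's. The paper argues uniformly for all three cases. Every element of $A\subseteq G_{0+:r}$ is unipotent, hence of odd $p$-power order, so squaring is a bijection of $A$. Put $y=x^{1/2}\in A$. Both $\theta(y)$ and $y^{-1}$ are square roots of $\theta(x)=x^{-1}$, hence $\theta(y)=y^{-1}$ and $y\theta(y)^{-1}=y^2=x$ (this is Hakim--Murnaghan, Lemma 2.1). The argument uses nothing about $\theta$ beyond $\theta(A)=A$.

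Your Sylow argument for finite $A$ is valid, and it already settles the infinite cases. Since $\theta(x)=x^{-1}$, the cyclic group $\langle x\rangle$ is finite of odd order and $\theta$-stable. Applying your argument to $\langle x\rangle\rtimes\langle\theta\rangle$ gives $y\in\langle x\rangle\subseteq A$. Explicitly, $y=x^{(p^k+1)/2}$ if $x$ has order $p^k$, which is exactly the paper's square root.

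Your filtration argument for $A=G_{0+:r}$ or $T_{0+:r}$ is correct provided $\theta$ preserves each $A\cap G_{s:r}$. This holds for every involution the paper actually uses, namely those of the form $\mathrm{Ad}(z)\,\iota\,\mathrm{Ad}(z)^{-1}$ with $\iota$ coming from $G$ and $z\in G_r$, since $G_{s:r}$ is normal in $G_r$ and $\iota$-stable.

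However, your proposed justification that $G_{s:r}$ is characteristic is false for arbitrary involutions of $G_r$, and the lemma is stated for those. Take $G=\mathbb{G}_m$, $r\ge 2$ and $p>r$. The logarithm identifies $G_{0+:r}$ with $\mathbb{G}_a^r$ via the coefficients of $\varpi,\dots,\varpi^r$. Swapping the first and last coordinates, and extending by the identity on $G_0$, gives an involution of $G_r=G_0\times G_{0+:r}$ that does not preserve $G_{r:r}$. So your second case proves the lemma only under a filtration-stability hypothesis. The square-root trick, or equivalently your own finite-case argument applied to $\langle x\rangle$, is what makes that hypothesis unnecessary.
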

\begin{proof}
    As $q$ is odd, the map $x \mapsto x^2$ induces a bijection from $A$ to itself. Then the statement follows from \cite[Lemma2.1]{HM}.
\end{proof}

For $t \in T_0^F = T(\BF_q)$ we denote by $Z_G^\circ(t)$ the identity component of the centralizer $Z_G(t)$ of $t$ in $G$.
\begin{lemma} \label{stable}
    Let $\iota$ be an involution of $G$ over $\BF_q$, $\th \in G_r^F \cdot \iota$ and $t \in T_0^F$ such that $\th(t) = t$. Then there exists an involution $\iota'$ of $Z_G^\circ(t)$ over $\BF_q$ such that the restriction of $\th$ to $Z_G^\circ(t)_r$ belongs to $Z_G^\circ(t)_{0+:r}^F \cdot \iota'$.
\end{lemma}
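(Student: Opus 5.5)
The approach is to write $\th = \Ad(g)\circ \iota$ with $g \in G_r^F$, split off the depth-zero part of $g$, and take $\iota'$ to be the resulting depth-zero involution restricted to $Z_G^\circ(t)$. The remaining positive-depth factor must then be shown to lie in $Z_G^\circ(t)_{0+:r}$.

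\emph{Step 1 (normal form).} Using $G_r = G_{0+:r} \rtimes G_0$, which is $F$-stable, write $g = g_0 g_+$ with $g_0 \in G_0^F = G(\BF_q)$ and $g_+ \in G_{0+:r}^F$. Put $\th_0 = \Ad(g_0)\circ\iota$. This is an automorphism of $G$ defined over $\BF_q$, since $g_0$ is $F$-fixed and $\iota$ commutes with $F$. Then
\[\th = \Ad(g_0 g_+ g_0\i)\circ \th_0 = \Ad(h)\circ\th_0, \qquad h := g_0 g_+ g_0\i \in G_{0+:r}^F,\]
because $G_{0+:r}$ is normal and $F$-stable. Reducing $\th^2 = 1$ along $G_r \to G_0$ kills $\Ad(h)$, so $\th_0^2 = 1$ and $\th_0$ is an involution of $G$ over $\BF_q$.

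\emph{Step 2 (define $\iota'$).} Since $\th(t) = t$, reducing modulo $G_{0+:r}$ gives $\th_0(t) = t$ in $G_0$. As $\th_0$ is an algebraic automorphism of $G$, this equality holds in $G$ itself. Hence $\th_0$ preserves $Z_G(t)$ and therefore $Z_G^\circ(t)$. Because $t \in T(\BF_q)$ and $\th_0$ is $\BF_q$-rational, $Z_G^\circ(t)$ is defined over $\BF_q$. So $\iota' := \th_0|_{Z_G^\circ(t)}$ is an involution of $Z_G^\circ(t)$ over $\BF_q$.

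\emph{Step 3 (locate $h$).} The element $t \in G_0 \subseteq G_r$ is fixed by $\th$ by hypothesis, and by $\th_0$ by Step 2, where $\th_0$ is extended to $G_r$ through its $\BF_q[[\varpi]]$-points. Therefore $h t h\i = t$, so $h \in Z_{G_{0+:r}}(t)$. The statement then follows from the claim
\[Z_{G_{0+:r}}(t) = Z_G^\circ(t)_{0+:r}.\]
Granting the claim, $h \in Z_G^\circ(t)_{0+:r}^F$, since $h$ is $F$-fixed and $Z_G^\circ(t)_{0+:r}$ is $F$-stable. Moreover $\th$ preserves $Z_G^\circ(t)_r$, and on it $\th = \Ad(h)\circ\iota' \in Z_G^\circ(t)_{0+:r}^F \cdot \iota'$, as required.

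\emph{Step 4 (the claim; main obstacle).} The claim is the main technical point. I would prove it by induction on $r$, using the filtration by the normal subgroups $G_{i:r}$ for $1 \le i \le r$. Each graded piece $G_{i:i} \cong \mathrm{Lie}(G)\otimes\varpi^i$ is a vector group on which $t$ acts by $\Ad(t)$, and the fixed points of this action are $\mathrm{Lie}(Z_G(t)) = \mathrm{Lie}(Z_G^\circ(t))$. The element $t$ is semisimple of finite order $m$ with $p \nmid m$, since $t \in T(\BF_q)$. Consequently $H^1(\langle t\rangle, V) = 0$ for each layer $V$, because $m$ is invertible on the $\BF_p$-vector space $V$. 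By the long exact sequence of nonabelian cohomology, taking $t$-fixed points is then exact along the filtration. This shows that $Z_{G_{0+:r}}(t)$ is an iterated extension of the groups $\mathrm{Lie}(Z_G^\circ(t))\otimes\varpi^i$. Since $Z_G^\circ(t)_{0+:r} \subseteq Z_{G_{0+:r}}(t)$ has the same graded pieces, the two groups coincide.
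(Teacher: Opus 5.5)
Your Steps 1--3 follow the paper's route: your $\th_0$ is the paper's $\iota' = \iota_w$, and your $h$ is the paper's $y$. Your Step 4 correctly justifies $Z_{G_{0+:r}}(t) = Z_G^\circ(t)_{0+:r}$, which the paper only asserts. The gap is in the last sentence of Step 3. In the paper, $x \cdot \th$ means the \emph{conjugate} involution $\th_x = \Ad(x)\,\th\,\Ad(x\i)$, not the composite $\Ad(x)\circ\th$. So the hypothesis reads $\th = \iota_x$ with $x \in G_r^F$. The conclusion you must reach is $\th|_{Z_G^\circ(t)_r} = \Ad(z)\,\iota'\,\Ad(z)\i$ for some $z \in Z_G^\circ(t)_{0+:r}^F$. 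You have shown $\th = \Ad(h)\circ\iota'$ with $h \in Z_G^\circ(t)_{0+:r}^F$, which is weaker: you still need $h = z\,\iota'(z)\i$ for such a $z$. The paper relies on this conjugation form later, e.g. to write $\th = \d\cdot\iota$ with $\d \in G_{0+:r}^F$ before defining $\tth = \d \cdot \tiota$, and when the lemma is invoked in the proof of Theorem~\ref{formula}.

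The fix is short, but it uses the hypothesis in its actual form.
\begin{itemize}
\item Write $x = \d w$ with $\d \in G_{0+:r}^F$ and $w \in G_0^F$. Then your $g$ is $x\,\iota(x)\i$, your $\th_0 = \Ad(w\iota(w)\i)\circ\iota$ equals $\iota_w$, and your $h$ equals $\d\,\th_0(\d)\i$.
\item Hence $h\,\th_0(h) = 1$. From $\th^2 = 1$ alone, your Step 1 only gives that $h\,\th_0(h)$ is central.
\item $Z_G^\circ(t)_{0+:r}^F$ is a finite $\th_0$-stable $p$-group with $p$ odd. So Lemma~\ref{solution} applies, and concretely $z = h^{1/2}$ works, since it is a power of $h$ and so lies in this group.
\item Indeed $\th_0(z)^2 = \th_0(h) = h\i = (z\i)^2$, so uniqueness of square roots gives $\th_0(z) = z\i$. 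Therefore $z\,\th_0(z)\i = z^2 = h$.
\end{itemize}
This yields $\th|_{Z_G^\circ(t)_r} = \Ad(z\,\iota'(z)\i)\,\iota' = \Ad(z)\,\iota'\,\Ad(z)\i$, which lies in $Z_G^\circ(t)_{0+:r}^F \cdot \iota'$. That is exactly the final step of the paper's proof.
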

\begin{proof}
    By definition, $\th = \iota_x$ for some $x \in G_r^F$. Write $x = \d w$ with $w \in G_0 = G(\ov\BF_q)$ and $\d \in G_{0+:r}$. As $x$ is fixed by $F$, $w$ and $\d$ are also fixed by $F$. Let $\iota' = w \cdot \iota = \iota_w$ and $y = \d\iota'(\d\i) \in G_{0+:r}^F$. Then $\th = \Ad(y) \iota'$ fixes $t \in T(\BF_q)$. This implies that $\iota'(t) = t$ and $y \in Z_G(t) \cap G_{0+:r}^F = Z_G^\circ(t)_{0+:r}^F$. Moreover, since $y\iota'(y) = 1$, by Lemma \ref{solution} there exists $z \in Z_G^\circ(t)_{0+:r}^F$ such that $y = z \iota'(z\i)$. Thus $\th = \Ad(y) \iota' = (\iota')_z \in Z_G^\circ(t)_{0+:r}^F \cdot \iota'$ as desired.
\end{proof}

\subsection{The group $\tG_r$}
Let $\iota$ be an involution of $G$ defined over $\BF_q$. We fix an involution $\th$ in $G_r^F \cdot \iota$. 

Let $\pi: G_\sc \to G$ be the simply connected cover of the derived subgroup of $G$ together with a maximal torus $T_\sc = \pi\i(T)$. Let $\iota_\sc$ be a lift of $\iota$. Consider the following algebraic group \[\tG_r = G_{0+:r} \rtimes (G_\sc)_0,\] with an involution of $\tiota$ given by $(h, s) \mapsto (\iota(h), \iota_\sc(s))$. There is a natural projection $\tG_r \to G_r$ given by $(h, s) \mapsto h \pi(s)$, which we still denote by $\pi$.

Applying Lemma \ref{stable} (with $t=1$), we may assume that $\th = \d \cdot \iota$ for some $\d \in G_{0+:r}^F$. Then we put $\tth = \d \cdot \tiota$, which is an involution of $\tG_r$. Note that $\pi \tth = \th \pi$. Let $\tT_r = \pi\i(T_r) = T_{0+:r} \rtimes (T_\sc)_0$. We define \begin{align*}
    \Theta_{T_r} := \Theta_{T_r, \th} &= \{x \in G_r; \th_x(T_r) = T_r\} \\ \Theta_{\tT_r} := \Theta_{\tT_r, \tth} &= \{x \in \tG_r; \tth_x(\tT_r) = \tT_r\}.
\end{align*} Note that $\Theta_{T_r} = T_r \Theta_{T_r} G_r^\th$ and $\Theta_{\tT_r} = \tT_r \Theta_{\tT_r} \tG_r^\tth$.
\begin{lemma} \label{Theta}
    We have $\Theta_{\tT_r} = \pi\i(\Theta_{T_r})$.
\end{lemma}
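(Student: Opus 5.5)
We argue by comparing conjugation in $\tG_r$ and in $G_r$ via the equivariant map $\pi$. The key identity is
\[
\pi \circ \tth_x = \th_{\pi(x)} \circ \pi \quad (x \in \tG_r),
\]
which follows from $\pi\tth = \th\pi$ and $\pi \circ \Ad(x) = \Ad(\pi(x)) \circ \pi$.

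For the inclusion $\Theta_{\tT_r} \subseteq \pi\i(\Theta_{T_r})$, take $x$ with $\tth_x(\tT_r) = \tT_r$. Applying $\pi$ gives $\th_{\pi(x)}(\pi(\tT_r)) = \pi(\tT_r)$, so we need $\pi(\tT_r) = T_r$. This fails as stated, since $\pi(\tT_r) = T_{0+:r}\,\pi(T_\sc)_0$ is in general only a subtorus of $T_r$. We fix it with the following intermediate claim: for any involution $\eta$ of $G_r$ (or of $\tG_r$) induced as above, $\eta(T_r) = T_r$ if and only if $\eta$ preserves $T_{0+:r}$ and the reduction $T_0 \subseteq G_0$.

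\emph{Proof of the claim.} The subgroup $T_r \subseteq G_r$ is the preimage in $G_r$ of $T_0$ intersected with the centralizer-type data. The cleanest route is to note that $T_r = Z_{G_r}(T_{0+:r}) \cap \{\text{reduction in } T_0\}$, or more simply to characterize $T_r$ as the unique maximal torus lifting $T_0$ that is normalized by $T_0$ (Teichmüller section). An alternative is to reduce everything modulo $G_{0+:r}$. The statement $\eta(T_r) = T_r$ is equivalent to two conditions:
\begin{itemize}
\item[(a)] $\eta(T_0)$, computed in $G_0 = G_r/G_{0+:r}$, equals $T_0$;
\item[(b)] $\eta(T_{0+:r}) = T_{0+:r}$.
\end{itemize}
We would justify this by using $T_r = T_{0+:r} \rtimes T_0$ and the fact that a maximal torus of $G_0$ together with its $0+$ part determines $T_r$, namely $T_r = N_{G_r}(T_{0+:r})$-type arguments. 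This is a small structural lemma about $G_r$ proved via the root decomposition $G_{0+:r} = \prod_f G_r^f$: an element of $G_{0+:r}$ normalizing $T_{0+:r}$ and a lift of $T_0$ can be conjugated into place.

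With the claim in hand, both sides reduce to conditions on $G_0$ and on $G_{0+:r}$. Write $x = (h,s)$, so $\pi(x) = h\pi(s)$. The $0+$-parts of $\tG_r$ and $G_r$ agree, since both are $G_{0+:r}$ with $\tT_{0+:r} = T_{0+:r}$, and $\tth_x$ and $\th_{\pi(x)}$ act identically on them by the key identity. Hence condition (b) is the same on both sides. On the reductive quotient, $\tth_x$ induces on $G_\sc$ the involution $\Ad(s)\iota_\sc\Ad(s)\i$ (up to the $0+$ twist, which is trivial mod $G_{0+}$), lifting $\Ad(\pi(s))\iota\Ad(\pi(s))\i$ on $G$. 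A maximal torus $T_\sc = \pi\i(T)$ is preserved by an automorphism of $G_\sc$ if and only if the induced automorphism of $G$ preserves $T$, because $\pi$ is a central isogeny onto $G_\der$ and $T = Z(G)^\circ \cdot \pi(T_\sc)$. Here the central torus is preserved automatically, since $Z(G)^\circ$ is characteristic. So condition (a) also matches, and this gives both inclusions.

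The main obstacle is the structural claim characterizing $T_r$ by (a) and (b), in particular showing that $\eta(T_r)$, which is a torus with the same reductions, equals $T_r$. I would prove it by observing that $\eta(T_r)$ is a maximal torus of $G_r$ (in the Moy–Prasad sense) lifting $T_0$. Any two such lifts are $G_{0+:r}$-conjugate, and the conjugating element must then normalize $T_{0+:r}$ modulo higher filtration. An induction on the filtration $G_{s:r}$, using the root-subgroup decomposition and the fact that $T_0$ acts on $\mathrm{Lie}$ root spaces by nontrivial characters, then shows that the conjugating element lies in $T_{0+:r}$.
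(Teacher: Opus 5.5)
There is a genuine gap. Your whole argument rests on the intermediate claim that $\eta(T_r)=T_r$ if and only if $\eta$ preserves $T_{0+:r}$ and the reduction of $T_0$ in $G_0$, and this claim is false. Elements of $G_{0+:r}$ in the top filtration layers centralize $T_{0+:r}$ and act trivially modulo $G_{0+:r}$, yet they do not normalize $T_r$. For the same reason, your alternative description of $T_r$ via $Z_{G_r}(T_{0+:r})$ fails, since that centralizer contains $G_{r:r}$.

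Here is a concrete counterexample. Take $G=\GL_2$, $\th=\iota=\Ad(\mathrm{diag}(1,-1))$, $T$ diagonal, $r=1$, and $x=u_\a(\varpi)$ with $\a$ the positive root. Then $\th_x=\Ad(c)\,\iota$ with $c=x\iota(x)\i=u_\a(2\varpi)\in G_{1:1}$. Because $G_{1:1}$ is abelian, $\th_x$ preserves $T_{1:1}$, and it reduces to $\iota$ on $G_0$; so (a) and (b) both hold. However, $c\,\mathrm{diag}(a,b)\,c\i$ has upper-right entry $2\varpi(b-a)\neq 0$ whenever $a\neq b$ (recall $q$ is odd), so $\th_x(T_1)\neq T_1$.

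Your sketched proof of the claim breaks at its last step. The element conjugating $T_r$ to $\eta(T_r)$ does normalize $T_{0+:r}$. But nothing forces it to normalize the Teichm\"uller lift of $T_0$, so you cannot use the nontriviality of roots on $T_0$. Indeed $c\notin T_{1:1}$ in the example. The same objection applies to your version of the claim for $\tG_r$.

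The missing idea is to use, at level $r$, the fact you only invoke on the reductive quotient: $T_r=\pi(\tT_r)\,Z^\circ_r$, where $Z^\circ$ is the identity component of the center of $G$. This follows from $T_r=T_{0+:r}T_0$ and $T_0=\pi((T_\sc)_0)\,Z^\circ_0$. Every $\th_y$ has the form $\Ad(z)\iota$ with $z\in G_r$, so it preserves the central, $\iota$-stable subgroup $Z^\circ_r$.

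With this, for $x\in\Theta_{\tT_r}$ your identity $\pi\tth_x=\th_{\pi(x)}\pi$ gives $\th_{\pi(x)}(T_r)=\pi(\tth_x(\tT_r))\,Z^\circ_r=\pi(\tT_r)\,Z^\circ_r=T_r$. The reverse inclusion needs no structure theory. If $\th_{\pi(x)}(T_r)=T_r$, then $\pi(\tth_x(\tT_r))\subseteq T_r$, so $\tth_x(\tT_r)\subseteq\pi\i(T_r)=\tT_r$. Equality holds because $\tth_x$ is an involution.

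This is exactly the paper's short proof. It requires no characterization of $T_r$ through its filtration pieces.
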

\begin{proof}
    Note that $\pi \tth_x = \th_{\pi(x)} \pi$ for any $x \in \tG_r$ and $T_r = \pi(\tT_r) Z^\circ_r$. Here $Z^\circ$ denotes the identity component of the center of $G$.  Let $x \in \Theta_{\tT_r}$. Then we have \[\th_{\pi(x)}(T_r) = \th_{\pi(x)}(\pi(\tT_r)) \th_{\pi(x)}(Z^\circ_r) = \pi(\tth_x(\tT_r)) Z^\circ_r = \pi(\tT_r) Z^\circ_r = T_r,\] which implies that $\pi(x) \in \Theta_{T_r}$ and hence $\pi(\Theta_{\tT_r}) \subseteq \Theta_{T_r}$. On the other hand, as $\tT_r = \pi\i(T_r)$, it follows by definition that $\pi\i(\Theta_{T_r}) \subseteq \Theta_{\tT_r}$. Thus $\pi\i(\Theta_{T_r}) = \Theta_{\tT_r}$ as desired.  
\end{proof}

\begin{lemma} \label{Theta}
    Let $w, w' \in \Theta_{\tT_r}$ such that $w'w\i \in G_{0+:r}$. Then we have $w' \in T_{0+:r} w G_{0+:r}^\tth$.
\end{lemma}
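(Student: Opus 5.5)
The plan is to move the difference between $w$ and $w'$ to the left, and then use that a torus normalizer at positive depth is the torus itself. Write $w' = u w$ with $u = w'w\i \in G_{0+:r}$, and put $\sigma = \tth_w = \Ad(w)\tth\Ad(w)\i$. This is an involution of $\tG_r$ with $\sigma(\tT_r) = \tT_r$. It preserves the normal subgroup $G_{0+:r}$ and also $T_{0+:r} = \tT_r \cap G_{0+:r}$. Since $\tth_{w'} = \Ad(u)\sigma\Ad(u)\i$, the statement reduces to showing
\[u \in T_{0+:r}\, G_{0+:r}^{\sigma}.\]
Indeed, if $u = tg$ with $t \in T_{0+:r}$ and $\sigma(g) = g$, then $w' = t w (w\i g w)$. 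Moreover $w\i g w \in G_{0+:r}^{\tth}$, because $\sigma(g)=g$ is equivalent to $\tth(w\i g w) = w\i g w$.

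\emph{Step 1.} A direct computation gives $\Ad(u)\sigma\Ad(u)\i = \Ad(x)\sigma$ with $x := u\sigma(u)\i \in G_{0+:r}$. By hypothesis $\tth_{w'}$ stabilizes $\tT_r$, and $\sigma$ does too, so $x$ normalizes $\tT_r$, and hence normalizes $T_r = \pi(\tT_r)Z^\circ_r$.

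\emph{Step 2.} We show $N_{G_{0+:r}}(T_r) = T_{0+:r}$, so that $x \in T_{0+:r}$. I expect this to be the main point needing care; the rest is formal. I would argue by induction along the Moy–Prasad filtration.

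Suppose $y \in G_{s:r}$ normalizes $T_r$, with $s \ge 1$. Use the affine-root decomposition of \S\ref{subsec:root} to write $y \equiv \tau \prod_{\a} u_{(\a,s)}(c_\a)$ modulo $G_{s+:r}$, with $\tau \in T_{s:r}$. For $t \in T_0$, the commutator $y\, t\, y\i t\i$ lies in $T_r \cap G_{s:r}$. Modulo $G_{s+:r}$ its root components are $u_{(\a,s)}\big((1-\a(t)\i)c_\a\big)$, up to sign conventions. These components must vanish.

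Now choose $t \in T_0 = T(\ov\BF_q)$ with $\a(t) \ne 1$ for all $\a \in \Phi$; such a $t$ exists because $\ov\BF_q$ is infinite. This forces $c_\a = 0$ for all $\a$, so $y \in T_{s:r}G_{s+:r}$. Dividing off the torus part, we obtain an element of $G_{s+:r}$ that still normalizes $T_r$, and we continue by induction until depth $r$.

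\emph{Step 3.} The element $x$ satisfies $x\sigma(x) = u\sigma(u)\i\sigma(u)u\i = 1$. Apply Lemma \ref{solution} with $A = T_{0+:r}$ and the involution $\sigma$. The lemma is stated for an involution of $G_r$, but its proof uses only that $q$ is odd and \cite[Lemma 2.1]{HM}, so it applies verbatim to $\sigma$ restricted to $T_{0+:r}$. This yields $t \in T_{0+:r}$ with $x = t\sigma(t)\i$.

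Finally set $g = t\i u$. Then
\[g\sigma(g)\i = t\i u\sigma(u)\i\sigma(t) = t\i x \sigma(t) = 1,\]
so $g \in G_{0+:r}^{\sigma}$ and $u = tg$. By the reduction in the first paragraph, $w' \in T_{0+:r}\, w\, G_{0+:r}^{\tth}$.
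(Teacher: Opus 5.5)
Your proof is correct and is essentially the paper's argument. Both write $w' = uw$ and note that $\tth_{w'} = \Ad(u\tth_w(u)^{-1})\tth_w$, so $x = u\tth_w(u)^{-1}$ normalizes $\tT_r$ and hence lies in $T_{0+:r}$. Both then split $x$ via Lemma \ref{solution} on $T_{0+:r}$ and translate back using $G_{0+:r}^{\tth_w} = wG_{0+:r}^{\tth}w^{-1}$.

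The only difference is your Step 2. There you prove $N_{G_{0+:r}}(T_r) = T_{0+:r}$ by letting a regular element of $T_0$ act on the graded pieces $G_{s:r}/G_{s+:r}$, whereas the paper simply asserts this fact.
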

\begin{proof}
    Assume $w' = \d w$ for some $\d \in G_{0+:r}$. Then $\tth_{w'} = \Ad(\d) \tth_w \Ad(\d)\i = \Ad(\d \tth_w(\d)\i) \tth_w$. Hence $\tT_r$ is normalized by $\d \tth_w(\d)\i \in G_{0+:r}$, which implies that $\d \tth_w(\d)\i \in T_{0+:r}$. By Lemma \ref{solution} there exists $t \in T_{0+:r}$ such that $\d \tth_w(\d)\i = t\i \tth_w(t)$,  that is, $t\d \in G_{0+:r}^{\tth_w} = w G_{0+:r}^\tth w\i$. Then we have $w' = t\i (t\d) w \in t\i (w G_{0+:r}^\tth w\i) w \subseteq T_{0+:r} w G_{0+:r}^\tth$ as desired.
\end{proof}

We put $\tG_0 = (G_\sc)_0 = G_\sc(\ov\BF_q)$ and $\tT_0 = (T_\sc)_0 = T_\sc(\ov\BF_q)$. Let $\tilde p_0: \tG_r \to \tG_0 \cong \tG_r/G_{0+:r}$ be the natural projection. We set $\Theta_{\tT_0} = \Theta_{\tT_0, \tiota} = \{x \in \tG_0; \tiota_x(\tT_0) = \tT_0\}$.
\begin{proposition} \label{level-zero}
    We have that \[\tG_0 / \tG_0^\tiota = \sqcup_{w_0 \in \tT_0 \backslash \Theta_{\tT_0} / G_0^\tiota} \tB_0 w_0 \tG_0^\tiota/\tG_0^\tiota.\] Moreover, for any $w_0, w_0' \in \Theta_{\tT_0}$ such that $w_0' \in \tB_0 w_0 \tG_0^\tiota$ we have $w_0' \in \tT_0 w_0 \tG_0^\tiota$.
\end{proposition}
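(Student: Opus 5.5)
This is the classical level-zero statement: the $\tB_0 \times \tG_0^\tiota$ double coset decomposition of $\tG_0$, due to Springer and used by Lusztig in \cite{Lu90}. I would prove it by reducing to the fact that every Borel subgroup contains a maximal torus stable under a given involution. Throughout we work in $\tG_0 = G_\sc(\ov\BF_q)$ with the involution $\tiota$, and $\tB_0 = \tT_0 \tilde U_0$ is the Borel subgroup lifting $B_0$. Since $G_\sc$ is simply connected, $\tG_0^\tiota$ is connected by Steinberg's theorem; this is the reason for passing to $\tG$. The identity $\tB_0 w_0 \tG_0^\tiota = w_0'' \ldots$ type manipulations below only use that $\tG_0^\tiota$ is a closed subgroup.

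\emph{Step 1 (covering).} Fix $g \in \tG_0$ and consider the involution $\tiota_{g\i}$. The Borel subgroup $\tB_0$ contains a $\tiota_{g\i}$-stable maximal torus $S$; this is the standard fact that every Borel subgroup contains a $\sigma$-stable maximal torus for any involution $\sigma$ (Springer, Steinberg \S7). Choose $b \in \tB_0$ with $bSb\i = \tT_0$ and set $w_0 = b g$. Then $\tiota_{w_0} = \Ad(b)\tiota_{g\i}\Ad(b)\i$ stabilizes $bSb\i = \tT_0$, so $w_0 \in \Theta_{\tT_0}$. Hence $g \in \tB_0 w_0 \tG_0^\tiota$, and therefore $\tG_0 = \bigcup_{w_0} \tB_0 w_0 \tG_0^\tiota$. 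Since $\Theta_{\tT_0}$ is stable under left multiplication by $\tT_0$ and right multiplication by $\tG_0^\tiota$, the union may be indexed by $\tT_0\backslash\Theta_{\tT_0}/\tG_0^\tiota$.

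\emph{Step 2 (disjointness, the ``moreover'' part).} Suppose $w_0' = b w_0 h$ with $b \in \tB_0$, $h \in \tG_0^\tiota$, and $w_0, w_0' \in \Theta_{\tT_0}$. Replacing $w_0'$ by $w_0' h\i$, which does not change its class, we may assume $w_0' = b w_0$. Then $\tiota_{w_0'} = \Ad(b)\tiota_{w_0}\Ad(b)\i$. Put $\sigma = \tiota_{w_0}$; both $\tT_0$ and $b\i \tT_0 b$ are $\sigma$-stable maximal tori contained in $\tB_0$. Write $b = u t$ with $u \in \tilde U_0$ and $t \in \tT_0$; it suffices to handle $u$. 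Set $c = u\i \sigma(u)$. One then has to show that $c$ normalizes $\tT_0$ and that $u$ can be modified so as to lie in $\tG_0^{\sigma}$ modulo $\tT_0$. Note that $c \in \tilde U_0\,\sigma(\tilde U_0)$. The cleanest route I would follow is Springer's: the $\sigma$-stable tori in $\tB_0$ that are $\tB_0$-conjugate are conjugate under $(\tB_0 \cap \sigma(\tB_0))$, and $\tB_0\cap\sigma(\tB_0)$ is $\sigma$-stable and connected solvable. Within it, the $\sigma$-stable maximal tori form a single orbit under $(\tB_0\cap\sigma(\tB_0))^{\sigma}$, by the Lang-type argument used in Lemma \ref{solution}: if $x\sigma(x) = 1$ in a unipotent $\sigma$-stable group, then $x = y\sigma(y)\i$, which holds since $p$ is odd. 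This produces $v \in \tG_0^{\sigma}\cap \tB_0$ and $t' \in \tT_0$ with $b = t' v$ up to elements normalizing $\tT_0$ inside $\tB_0$, that is, $\tT_0$ itself. Transporting back via $\tG_0^{\sigma} = w_0 \tG_0^\tiota w_0\i$, we get $w_0' = t' v w_0 \in \tT_0 w_0 \tG_0^\tiota$. Disjointness of the union in Step 1 follows immediately.

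The main obstacle is Step 2: showing that two $\sigma$-stable maximal tori in $\tB_0$, conjugate by an element of $\tB_0$, are conjugate by an element of $(\tB_0\cap\sigma(\tB_0))^\sigma$. I would first try to reduce this to the unipotent group $\tilde U_0 \cap \sigma(\tilde U_0)$ and apply the odd-characteristic square-root argument from Lemma \ref{solution}. If that stalls, I would simply cite Springer's result on $B\backslash G/K$ orbits, together with \cite[\S2]{Lu90}, where exactly this parametrization by $T\backslash\Theta_T/K$ is established.
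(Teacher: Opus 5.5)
The paper does not prove this proposition. It is stated as the classical level-zero input: Springer's parametrization of $\tB_0\backslash\tG_0/\tG_0^\tiota$ by $\tT_0\backslash\Theta_{\tT_0}/\tG_0^\tiota$, as used by Lusztig in \cite{Lu90}. Your proposal is the standard proof of that result. It is correct in substance with no missing idea, but two places need repair.

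\textbf{Step 1.} One has $\tiota_{bg}=\Ad(b)\,\tiota_g\,\Ad(b)^{-1}$. So the torus $S\subseteq\tB_0$ must be chosen $\tiota_g$-stable, not $\tiota_{g^{-1}}$-stable, for $w_0=bg$ to lie in $\Theta_{\tT_0}$. This is a harmless slip.

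\textbf{Step 2.} The aside on $c=u^{-1}\sigma(u)$ is not quite right: for $b=u$ it is $u\sigma(u)^{-1}$, not $c$, that normalizes $\tT_0$. It can be dropped. More importantly, the decisive step is only gestured at. It closes directly, without any square-root argument and without using that $p$ is odd:
\begin{itemize}
\item Put $H=\tB_0\cap\sigma(\tB_0)$. Since $\sigma(\tT_0)=\tT_0$, one has $H=\tT_0\ltimes H_u$, where $H_u=\tilde U_0\cap\sigma(\tilde U_0)$ is a product of root subgroups. Hence $H_u$ is connected, unipotent and $\sigma$-stable, and $H$ is connected solvable.
\item The torus $b^{-1}\tT_0 b$ is $\sigma$-stable and lies in $\tB_0$, hence also in $\sigma(\tB_0)$ and so in $H$. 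Therefore $b^{-1}\tT_0 b=u^{-1}\tT_0 u$ for some $u\in H_u$.
\item Applying $\sigma$ gives $u\sigma(u)^{-1}\in N(\tT_0)\cap H_u\subseteq N_{\tB_0}(\tT_0)\cap\tilde U_0=\{1\}$. So $u\in\tG_0^\sigma$.
\item Then $bu^{-1}\in N_{\tB_0}(\tT_0)=\tT_0$, so $b\in\tT_0\tG_0^\sigma$. Since $\tG_0^\sigma=w_0\tG_0^\tiota w_0^{-1}$, this gives $w_0'=bw_0\in\tT_0 w_0\tG_0^\tiota$.
\end{itemize}

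The odd-$p$ square-root lemma you invoke is what one needs for a general $\sigma$-stable torus $S_1\subseteq H$. There the obstruction $h^{-1}\sigma(h)$ lives in $Z_{H_u}(S_1)$ and need not vanish. Here $S_1=\tT_0$ is self-centralizing in $\tG_0$, so the obstruction is trivial.

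With these fixes you do not need to fall back on citing Springer. Your remark that connectedness of $\tG_0^\tiota$ plays no role here is also correct.
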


For $w \in \Theta_{\tT_r}$ we define $\tCO_w = \tT_0 w \tG_r^\tth/\tG_r^\tth$ and $\tCC_w = \tCI_r \tCO_w$. 
\begin{lemma} \label{decomposition}
    There is a decomposition \[\tG_r / \tG_r^\tth = \sqcup_{w \in \tT_r \backslash \Theta_{\tT_r} / \tG_r^\tth} \tCC_w.\] 
\end{lemma}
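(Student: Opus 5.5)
The plan is to lift the level-zero decomposition of Proposition \ref{level-zero} along the projection $\tilde p_0: \tG_r \to \tG_0$, and then handle the kernel $G_{0+:r}$ using Lemma \ref{solution} and the second Lemma \ref{Theta}. Here $\tCI_r = \tilde p_0\i(\tB_0)$, so $\tCI_r = G_{0+:r} \rtimes \tB_0$. The argument has two parts: covering and disjointness.

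\emph{Covering.} Let $g \in \tG_r$. By Proposition \ref{level-zero} applied to $\tilde p_0(g)$, we can write $\tilde p_0(g) \in \tB_0 w_0 \tG_0^\tiota$ with $w_0 \in \Theta_{\tT_0}$. Viewing $w_0$ in $\tG_0 \subseteq \tG_r$ (the semidirect factor), we get $g = b w_0 h \d$ with $b \in \tB_0$, $h \in \tG_0^\tiota$ and $\d \in G_{0+:r}$. Since $\th = \d_0 \cdot \iota$ with $\d_0 \in G_{0+:r}$, I will first twist $w_0$ by a suitable element of $G_{0+:r}$ to obtain $w \in \Theta_{\tT_r}$ with $\tilde p_0(w) = w_0$. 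Concretely, put $y = w_0\d_0$. Then $\tth_y = \Ad(w_0\d_0)\tiota\Ad(w_0\d_0)\i$ normalizes $\tT_r$ modulo $G_{0+:r}$. The failure of normalization is measured by an element $x \in G_{0+:r}$ satisfying $x\,\tth_y(x) = 1$, up to correction by $\tT_r$. Lemma \ref{solution} (with $A = G_{0+:r}$ or $T_{0+:r}$) then lets us write $x = z\tth_y(z)\i$, and replacing $y$ by $z\i y$ gives the required $w$. I expect this to be the main obstacle: $\tiota_{w_0}$ stabilizes $\tT_0$, but the twisted involution only stabilizes $\tT_r$ up to $G_{0+:r}$, and one must show that the $\tT_r$-normalizer defect can be killed. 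The idea I would try first is to use that $\tT_r$ is a maximal torus of the smooth group $\tG_r$, so that its $G_{0+:r}$-conjugates are conjugate under $G_{0+:r}$. This allows one to modify $y$ by $G_{0+:r}$ so that $\tth_y(\tT_r) = \tT_r$, and Lemma \ref{solution} is used only for the symmetric correction.

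Next, to show $g \in \tCI_r w \tG_r^\tth$, it suffices to prove that $G_{0+:r} w \tG_r^\tth = \tCI_r' \cdots$. More precisely, we need $w G_{0+:r} \subseteq \tCI_r w \tG_r^\tth$, i.e.\ $G_{0+:r} \subseteq \tCI_r \cdot \tG_r^{\tth_w}$ after conjugation. For this I would use an Iwahori-type factorization of $G_{0+:r}$ with respect to the $\tth_w$-stable torus $\tT_r$: write $G_{0+:r} = \prod_f G_r^f$ over affine roots with $n_f \ge 1$. The key claim is that $G_{0+:r} = (G_{0+:r} \cap \tCI_r) \cdot G_{0+:r}^{\tth_w}$, which holds trivially since $G_{0+:r} \subseteq \tCI_r$. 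So this step is in fact immediate. Hence $g \in \tCI_r w \tG_r^\tth$, and multiplying by $\tT_0 \subseteq \tCI_r$ gives $g\tG_r^\tth \subseteq \tCC_w$.

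\emph{Disjointness.} Suppose $\tCC_w \cap \tCC_{w'} \ne \emptyset$ for $w, w' \in \Theta_{\tT_r}$. Then $w' \in \tCI_r w \tG_r^\tth$. Projecting by $\tilde p_0$ (which sends $\tth$-fixed points into $\tiota$-fixed points, since $\d_0 \in G_{0+:r}$) gives $\tilde p_0(w') \in \tB_0 \tilde p_0(w) \tG_0^\tiota$. By the second part of Proposition \ref{level-zero}, $\tilde p_0(w') \in \tT_0 \tilde p_0(w) \tG_0^\tiota$. Lifting, we can replace $w'$ within its double coset $\tT_r w' \tG_r^\tth$ so that $w' w\i \in G_{0+:r}$. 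Here one needs that elements of $\tG_0^\tiota$ lift to $\tG_r^\tth$, which follows from Lemma \ref{solution} applied to $A = G_{0+:r}$ and the $\tth$-action on the fibre. The second Lemma \ref{Theta} then yields $w' \in T_{0+:r} w G_{0+:r}^\tth \subseteq \tT_r w \tG_r^\tth$. Thus distinct double cosets give disjoint pieces $\tCC_w$, which proves the decomposition.
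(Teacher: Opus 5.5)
Your disjointness half is essentially the paper's argument and is fine: project by $\tilde p_0$, apply the second part of Proposition~\ref{level-zero}, lift $\tG_0^\tiota$ into $\tG_r^\tth$ via Lemma~\ref{solution}, and finish with Lemma~\ref{Theta}. The covering half, however, has a genuine gap, at exactly the step you flag as the main obstacle: you never actually produce $w \in \Theta_{\tT_r}$ with $\tilde p_0(w) = w_0$. Two things go wrong there.

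\begin{itemize}
\item Your twist is miscomputed. For $y = w_0\d_0$ one gets $\tth_y = \Ad(y)\Ad(\d_0)\tiota\Ad(\d_0)\i\Ad(y)\i = \Ad(w_0\d_0^2)\tiota\Ad(w_0\d_0^2)\i$, not $\Ad(w_0\d_0)\tiota\Ad(w_0\d_0)\i$.
\item The proposed repair rests on a false premise. For $r \ge 1$, $\tT_r = T_{0+:r}\rtimes\tT_0$ is not a torus: it contains the unipotent group $T_{0+:r}$, and its maximal torus is $\tT_0$. So conjugacy of maximal tori does not apply to it as stated. Moreover, replacing $y$ by $zy$ with $z \in G_{0+:r}$ turns $\tth_y(\tT_r)$ into its conjugate by $z\,\tth_y(z)\i$. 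You would therefore also need the conjugating element to have this symmetric form, and you do not verify that.
\end{itemize}

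The missing observation is that no correction is needed. Take $w = w_0\d_0\i$. Then
$\tth_w = \Ad(w_0\d_0\i)\Ad(\d_0)\tiota\Ad(\d_0)\i\Ad(\d_0 w_0\i) = \tiota_{w_0}$ exactly.

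\begin{itemize}
\item Since $w_0 \in \Theta_{\tT_0}$, $\tiota_{w_0}$ stabilizes $\tT_0$.
\item Its action on $G_{0+:r}$ is $\iota_{\pi(w_0)}$. This stabilizes $\pi(T_\sc)$ and the central torus $Z^\circ$, hence $T = \pi(T_\sc)Z^\circ$ and therefore $T_{0+:r}$.
\item Thus $\tth_w(\tT_r) = \tT_r$, i.e.\ $w \in \Theta_{\tT_r}$.
\item Also $\tCI_r w = \tCI_r w_0$, because $w_0\d_0\i w_0\i \in G_{0+:r} \subseteq \tCI_r$.
\end{itemize}

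There is a second, smaller omission. In $g = b w_0 h \d$ the factor $h \in \tG_0^\tiota$ is in general not $\tth$-fixed. To conclude $g \in \tCI_r w_0 \tG_r^\tth$ you need $\tilde p_0(\tG_r^\tth) = \tG_0^\tiota$, which comes from Lemma~\ref{solution}; you only invoke this later, in the disjointness part.

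With these two points one gets $\tilde p_0\i(\tB_0 w_0 \tG_0^\tiota) = \tB_0 w_0 G_{0+:r}\tG_r^\tth = \tCI_r w \tG_r^\tth$. This is exactly the paper's covering argument.
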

\begin{proof}
     We denote by $\pi_r: (G_\sc)_r \to G_r$ the natural projection. Let $Z^\circ$ be the identity component of the center of $G$. For $x \in \Theta_{\tT_0}$ we have $\tiota_x(T_\sc) = T_\sc$ and hence \[\tiota_x(T_{0+:r}) = \iota_{\pi(x)}(\pi_r((T_\sc)_{0+:r}) Z^\circ_{0+:r}) = \pi_r(\iota_\sc)_x((T_\sc)_{0+:r}) Z^\circ_{0+:r} = \pi_r((T_\sc)_{0+:r}) Z^\circ_{0+:r} = T_{0+:r}.\] In particular, $\tiota_x(\tT_r) = \tiota_x(\tT_0 T_{0+:r}) = \tT_r$. As $\tth = \d \cdot \tiota$ for some $\d \in G_{0+:r}^F$, we have $\Theta_{\tT_0} \d\i \subseteq \Theta_{\tT_r}$. As $G_{0+:r} \subseteq \tCI_r$, it follows from Lemma \ref{solution} and Proposition \ref{level-zero} that \[\tG_r / \tG_r^\tth = \cup_{w_0 \in \Theta_{\tT_0}} \tCI_r w_0 \tG_r^\tth/\tG_r^\tth =  \cup_{w \in \Theta_{\tT_r}} \tCI_r w \tG_r^\tth/\tG_r^\tth = \cup_{w \in \Theta_{\tT_r}} \tCC_w.\] Let $w, w' \in \Theta_{\tT_r}$ such that $\tCC_w = \tCC_{w'}$. Then $\tilde p_0(w') \in \tB_0 \tilde p_0(w) \tG_0^{\tiota}$. By Proposition \ref{level-zero} we have $\tilde p_0(w') \in \tT_0 \tilde p_0(w) \tG_0^{\tiota}$. Note that $\tilde p_0(\tG^\tth) = \tG_0^\tiota$ by Lemma \ref{solution}. It follows that $w' \in G_{0+:r} \tT_r w \tG_r^\tth$. By Lemma \ref{Theta}, we have $w' \in  \tT_r w \tG_r^\tth$ and hence $\tG_r / \tG_r^\tth = \sqcup_{w \in \tT_r \backslash \Theta_{\tT_r} / \tG_r^\tth} \tCC_w$ as desired.   
\end{proof}

\subsection{Parametrizations} \label{subsec:para}
We view $\CI_r^+ \subseteq U_r G_{0+:r}$ as a subset of $\tG_r$ in the natural way. As $q$ odd, the map $x \mapsto x^2$ induces a variety automorphism of $\CI_r^+$. We denote by $x \mapsto x^{\frac{1}{2}}$ the inverse morphism. Let $w \in \Theta_{\tT_r}$. Then $\tth_w$ acts on $\tPhi_r$ in a natural way.

We fix bases $(\mu_i)_{i=1}^{m_1}$ and $(\nu_i)_{i=1}^{m_2}$ of the lattices $X_*(T)^{-\tth_w}$ and $X_*(T)^{\tth_w}$ respectively. Let $f \in \tPhi^{>0}$. If $f \in \BZ_{\ge 1}$ we put $\BA_{f, w} = \BA^{m_1}$, $\BA_f^w = \BA^{m_2}$ and \begin{align*} u_{f, w}: \BA_{f, w} \to \tG_r, &\quad (x_i)_i \to \prod_i \mu_i(1 + \varpi^f x_i) \\ u_f^w: \BA_f^w \to \tG_r^{\tth_w}, &\quad (x_i)_i \mapsto \prod_i \nu_i(1 + \varpi^f x_i).\end{align*} If $f \in \Phi \times \BZ$, we put $\BA_{f, w} = \BA_f^w = \BG_a$, $u_{f, w} = u_f$ as in \S\ref{subsec:root} and \[u_f^w: \BA_f^w \to \tG_r^{\tth_w}, \quad x \mapsto u_f(x) {}^{\tth_w} u_f(x) [{}^{\tth_w} u_f(x)\i, u_f(x)\i]^{\frac{1}{2}}.\] Here ${}^{\tth_w} z = \tth_w(z)$ and $[x, y] = x y x\i y\i$ is the commutator of $x$ and $y$. 

\

Let $E \subseteq \tPhi_r^{>0}$ be a subset. We set \begin{align*} A_{E, w} &= \{f \in E; \tth_w(f) \le f\} \\ B_{E, w} &= \{f \in A_{E, w}; \tth_w(f) \in E\}.\end{align*} Let $C_{E, w}$ (resp. $D_{E, w}$) be the set of affine roots $f = \tth_w(f) \in E \sm \BZ$ such that $\tth_w$ acts on $\CG^f$ trivially (resp. non-trivially). We define \[\BA_{E, w} = \prod_{f \in A_{E, w} \sm C_{E, w}} \BA_{f, w}, \quad \BA_E^w = \prod_{f \in B_{E, w} \sm D_{E, w}} \BA_w^f.\] 

We say $E \subseteq \tPhi_r^{>0}$ is $\tth_w$-admissible if the following conditions
\begin{itemize} 
    \item $\{1, 2, \dots, r\} \subseteq E$;

    \item if $f, f' \in E$ satisfies $f+f' \in \tPhi_r^{>0}$, then $f + f' \in E$;

    \item if $f \in E$ satisfies $f < \tth_w(f) \in \tPhi_r^{>0}$, then $\tth_w(f) \in E$.
\end{itemize} In particular, the product $G_r^E = \prod_{f \in E} G_r^f \subseteq \CI_r^\dag$ is subgroup if $E$ is $\tth_w$-admissible.

For $A \subseteq \tG_r$ we set $A^{\tth_w} = \{x \in A; \tth_w(x) = x\}$.
\begin{lemma} \label{parameter}
   Let $E \subseteq \tPhi_r^{>0}$ be $\tth_w$-admissible. Then the maps $(x_f)_f \mapsto \prod_f u_f^w(x_f)$ and $(x_f)_f \mapsto \prod_f u_{f, w}(x_f)$ induce isomorphisms $u_E^w: \BA_E^w \cong (G_r^E)^{\tth_w}$ and $ u_{E, w}: \BA_{E, w} \cong G_r^E / (G_r^E)^{\tth_w}$ respectively.  
\end{lemma}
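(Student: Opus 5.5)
Both isomorphisms will be proved by induction on $|E|$, peeling off the largest affine roots one at a time. The whole argument hinges on one feature of the filtration: commutators of root subgroups land at strictly higher levels. Fix $w$ and write $\sigma = \tth_w$.

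First I set up the induction. Let $f_0 = \max E$ for the linear order of \S\ref{subsec:root}. In the main case $f_0$ is not paired with a strictly larger root, so $E' = E \sm \{f_0, \sigma(f_0)\}$ (intersected with $E$) is again $\sigma$-admissible. Admissibility is exactly what makes this work: the closure under sums and under $f < \sigma(f)$ guarantees that removing a maximal $\sigma$-orbit leaves a $\sigma$-stable subgroup. The group $N = G_r^{\{f_0,\sigma(f_0)\}\cap E}$ is then normal in $G_r^E$. Moreover it is abelian modulo higher terms, because $[G_r^f, G_r^{f'}] \subseteq G_r^{f+f'}$ and $f + f' > f_0$ lies outside $E$. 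There is one exception: if $f_0 \in \BZ$ is a toral level, I remove it alone, and the subgroup $T_{r}^{f_0}$ is central modulo higher levels.

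The key step is to analyse $\sigma$ on the abelian quotient $N$, case by case.
\begin{itemize}
\item \emph{$f_0 \in \BZ$.} Here $N \cong X_*(T)\otimes \ov\BF_q$. It splits as the $(+1)$-eigenspace spanned by the $\nu_i$ and the $(-1)$-eigenspace spanned by the $\mu_i$, since $q$ is odd. This gives $\BA_{f_0}^w$ and $\BA_{f_0,w}$.
\item \emph{$\sigma(f_0) \neq f_0$, both in $E$.} The fixed points are the pairs $u_f(x)\,\sigma(u_f(x))$, and the $\sigma$-correction $[\cdot,\cdot]^{1/2}$ in the definition of $u_f^w$ makes $u_f^w(x)$ exactly $\sigma$-fixed. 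The coset space is parametrized by $u_{f,w}$ for the smaller root of the pair, matching $A_{E,w}$ and $B_{E,w}$.
\item \emph{$\sigma(f_0) > f_0$ not in $E$.} This cannot happen, by admissibility.
\item \emph{$\sigma(f_0) < f_0$ not in $E$.} Then $f_0 \in A \sm B$, contributing only to the coset space.
\item \emph{$\sigma(f_0) = f_0$.} $\sigma$ acts on the one-dimensional $\CG^{f_0}$ by $\pm 1$. Trivial action ($C$) gives only fixed points; non-trivial action ($D$) gives only cosets.
\end{itemize}
This matches the index sets $A\sm C$ and $B \sm D$ exactly.

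Next comes the dévissage. Given $g \in (G_r^E)^\sigma$, its image in $G_r^{E'}$ (the quotient by $N$) is $\sigma$-fixed. By induction, that image is $u_{E'}^w(x')$ modulo $N$. Then $g = u_{E'}^w(x')\,n$ with $n \in N$ satisfying $n\,\sigma(n)^{-1}$-type equations, which by the abelian analysis and Lemma \ref{solution} (or directly, using $q$ odd) reduce to $n \in N^\sigma$ up to adjusting. I would rather order the product so that the $N$-factor sits at the end and avoid twisting; then $n \in N^\sigma$, and the rank-one analysis above gives the parameter for $n$. For the quotient, I use the fibration $G_r^E/(G_r^E)^\sigma \to G_r^{E'}/(G_r^{E'})^\sigma$-type argument. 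Equivalently, I map $g \mapsto g\sigma(g)^{-1}$, which identifies $G_r^E/(G_r^E)^\sigma$ with the $\sigma$-twisted-inverse-symmetric elements by Lemma \ref{solution}. Then I show that $\prod_f u_{f,w}(x_f)$ hits each fibre exactly once, using that the fibre is an $N/N^\sigma$-torsor and that $u_{f_0,w}$ parametrizes $N/N^\sigma$. Bijectivity on $\ov\BF_q$-points, together with the source being an affine space and the target being smooth, gives isomorphism once I check the differential, or via the explicit triangular inverse.

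The main obstacle I expect is making the product order compatible with the $\sigma$-action. Specifically, I must show that $\prod_f u_f^w(x_f)$ is $\sigma$-fixed as a whole, not just each factor, and that the inverse map is a morphism. I will handle this by taking the product in increasing order and noting each factor is individually $\sigma$-fixed, so the product is too. The coordinates are then recovered triangularly: projecting to the lowest level reads off $x_{f_{\min}}$ polynomially, then I divide it off and recurse. This also gives the morphism inverse, avoiding any differential computation.
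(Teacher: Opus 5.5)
Your d\'evissage step does not work as written. Peeling off the single orbit $\{f_0,\sigma(f_0)\}\cap E$ of the maximal root produces neither a normal subgroup $N$ nor an admissible $E'$. Take $G=\GL_3$, $\th=\iota$ the transpose-inverse, $w=1$ (so $\sigma(\a+n)=-\a+n$), $r=1$, and the admissible set $E=\tPhi_1^{>0}$. With $\a_{ij}=\epsilon_i-\epsilon_j$, the maximum is $f_0=\a_{13}+1$ and $\sigma(f_0)=\a_{31}+1\in E$.

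\begin{itemize}
\item A matrix computation gives $u_{\a_{12}}(x)\,u_{\a_{31}}(\varpi y)\,u_{\a_{12}}(x)^{-1}=u_{\a_{31}}(\varpi y)\,u_{\a_{32}}(-xy\varpi)$. Hence $N=G_1^{\a_{13}+1}G_1^{\a_{31}+1}$ is not normal in $G_1^E$.
\item $E'=E\sm\{f_0,\sigma(f_0)\}$ is not closed under sums, since $(\a_{12}+0)+(\a_{23}+1)=f_0$. So $G_1^{E'}$ is not even a subgroup and cannot play the role of $G_1^E/N$.
\item Removing the toral level $r$ alone likewise breaks the condition $\{1,\dots,r\}\subseteq E$.
\end{itemize}

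The underlying slip is your guiding principle. Commutators of level-$0$ root subgroups stay at level $0$; they only move up in the linear order.

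There is a second, independent problem: $G_r^E$ is in general not $\sigma$-stable. In the example, $\sigma(\a_{12}+0)=\a_{21}+0\notin\tPhi^{>0}$, which is why $A_{E,w}$ and $B_{E,w}$ differ. So ``the image of $g$ in the quotient is $\sigma$-fixed'' only makes sense if the quotient map is the restriction of a $\sigma$-equivariant map between $\sigma$-stable ambient groups. For the same reason, Lemma \ref{solution} cannot be applied to $G_r^E$.

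The repair, which is what the paper does, is to induct on $r$ and peel off the entire top level at once. This works for four reasons.
\begin{itemize}
\item The kernel $G_{r:r}^E=G_r^E\cap\ker(G_r\to G_{r-1})$ is normal in $G_r^E$, because $G_{r:r}\trianglelefteq G_r$.
\item It is a linear subspace of the vector group $G_{r:r}$, on which $\sigma$ acts linearly.
\item The reduction $G_r\to G_{r-1}$ is $\sigma$-equivariant.
\item Its image is $G_{r-1}^{E_{r-1}}$, where $E_{r-1}=\{f\in E;\ n_f\le r-1\}$ is again admissible.
\end{itemize}

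With this set-up, your remark that each factor $u_f^w(x_f)$ is individually $\sigma$-fixed gives surjectivity of $(G_r^E)^{\sigma}\to(G_{r-1}^{E_{r-1}})^{\sigma}$. Your eigenspace case analysis, applied to the whole of $G_{r:r}^E$, identifies the fixed points and the cosets in the kernel. Here admissibility ensures $\sigma(f)<f$ whenever $f\in E$ and $\sigma(f)\notin E$. The two resulting fibre sequences then close the induction.

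The base case $r=0$ has to be supplied separately: there $G_0^E\subseteq U_0$ has no abelian layer with a linear $\sigma$-action. The paper simply quotes the classical statement for it.
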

\begin{proof}
    For $i \in \BZ_{\ge 0}$ we put $E^i = \{f \in E; n_f = i\}$ and $E_i = \{f \in E; n_f \le i\}$. By the assumption on $E$ we have $\Im u_f^w \subseteq \CI_{E, r}^{\tth_w}$ for $f \in B_{E, w}$.
    
    We argue by induction on $r$. If $r = 0$, then $G_r^E \subseteq U_0$ is a subgroup normalized by $T_0$ and the statement is a classical result. Suppose $r \ge 1$ and the statement holds for $r-1$. Consider the exact sequence \[1 \to G^E_{r:r} \to G^E_r\to G^{E_{r-1}}_{r-1} \to 1,\] where $G^E_{r:r}$ is the kernel of the natural projection $G^E_r \to G^{E_{r-1}}_{r-1}$. By induction hypothesis, the map $u_{E_{r-1}}^w: \BA_{E_{r-1}}^w \overset \sim \to (G_{r-1}^{E_{r-1}})^{\tth_w}$ is an isomorphism which factors through the natural maps $(G_r^E)^{\tth_w} \to (G_{r-1}^{E_{r-1}})^{\tth_w}$. Thus the natural map $(G^E_r)^{\tth_w} \to (G^{E_{r-1}}_{r-1})^{\tth_w}$ is surjective, and hence we have two exact sequences \begin{gather*} 1 \to (G^E_{r:r})^{\tth_w} \to (G^E_r)^{\tth_w} \to (G^{E_{r-1}}_{r-1})^{\tth_w} \to 1; \\  1 \to G_{r:r}^E / (G^E_{r:r})^{\tth_w} \to G_r^E / (G^E_r)^{\tth_w} \to G^{E_{r-1}}_{r-1} / (G^{E_{r-1}}_{r-1})^{\tth_w} \to 1. \end{gather*} As the $G_{r:r}^E \subseteq G_{r:r}$ are natural linear spaces over $\ov\BF_q$ and the action of $\tth_w$ on $G_{r:r}$ is a linear automorphism, one checks directly that the restriction maps \[\BA_{E^r}^w \overset {u_E^w}  \to  (G_{r:r}^E)^{\tth_w}, \quad \BA_{E^r, w} \overset {u_{E, w}} \to G^E_{r:r} / (G_{r:r}^E)^{\tth_w}\] are isomorphisms of abelian groups. The statement then follows from the snake lemma. 
\end{proof}

\section{Deep level Deligne-Lusztig representations} \label{sec:pDL}

\subsection{Howe factorization}\label{subec:Howe}
We fix a character $\phi \colon T_r^F \to \overline\BQ_\ell^\times$ and assume that $\phi$ admits a Howe factorization $(G_i,r_i,\phi_i)_{i=-1}^d$ as in the sense of Kaletha \cite{Kal}. Namely, this means that \[\phi = \phi_{-1} \prod_{i=0}^d \phi_i |_{T_r^F},\] where $\phi_{-1}$ is a character of $T_0^F$ of depth zero, and $\L := (G_i,r_i,\phi_i)_{i=0}^d$ is {\it generic datum} defined by the following conditions:
\begin{itemize}
\item $T= G^{-1} \subseteq G^0 \subsetneq G^1 \subsetneq \cdots \subsetneq G^d = G$ are Levi subgroups of $G$ defined over $\BF_q$;
\item $0 =: r_{-1} < r_0 < \cdots < r_{d-1} \le r_d \le r$ if $d \ge 1$ and $0 \le r_0$ if $d = 0$;
\item $\phi_i: G^i(k) \to \ov\BQ_\ell^\times$ is a character of depth $r_i$, and trivial on $G_\der^i(k)$ for $-1 \le i \le d$;
\item $\phi_i$ is of depth $r_i$ and is $(G^i, G^{i+1})$-generic in the sense of \cite[\S 9]{Yu_01} for $0 \le i \le d-1$.
\end{itemize}

We put \[ L = G^0.\] Moreover, for $0 \leq i \leq d$, we set \[ s_i = \frac{r_i}{2}.\] Write $T^i_\der = G^i_\der \cap T$. We associate to the generic datum $\L$ the following subgroups of $G$.
% Let $\ov U$ be the opposite of $U$ and set $T^i_\der = G^i_\der \cap T$. We consider the following subgroups.
\begin{align*}
    \CK_{\L, r} &= G^0_r G^1_{s_0:r} \cdots G^d_{s_{d-1}:r} \\
    \CH_{\L, r} &= G^0_{0+:r} G^1_{s_0:r} \cdots G^d_{s_{d-1}:r} \\
    \CK_{\L^+, r} &= G^0_{0+:r} G^1_{s_0+:r} \cdots G^d_{s_{d-1}+:r} \\
    \CT_\L & = (T^0_\der)_{0+:r} (T^1_\der)_{r_0+:r} \cdots (T^d_\der)_{r_{d-1}+:r}\\
    \CE_{\L, r} &=[T_r\CK^+_{\L, r}, T_r\CK^+_{\L, r}]\CT_{\L, r}.
\end{align*}
Fix a Borel subgroup $B = T U$ with unipotent radical $U$ such that each $G^i$ is a standard Levi subgroup with respect to $B$. We define \[\CI_{\L, r}^\dag = \CI_{\L, B, r}^\dag = (\CK_{\L, r} \cap U_r) \CT_{\L, r} (\CK_{\L, r}^+ \cap \ov U_r),\] where $\ov U$ denotes the opposite of $U$.

\subsection{Deep level Deligne-Lusztig representations}
Let notation be as in \S\ref{subec:Howe}. The parahoric Deligne-Lusztig varieties are defined by \[X = X_{T, B, \L, r} = \{h \in G_r; h\i F(h) \in F\CI_{\L, r}^\dag\},\] which admits an action of $G_r^F \times T_r^F$ given by $(g, t): h \mapsto g h t$.

Let $H_c^i(X, \ov\BQ_\ell)$ denote the $i$th $\ell$-adic cohomology with compact support of $X$. This is a representation of $G_r^F \times T_r^F$ with respect to the above action. Let $H_c^i(X, \ov\BQ_\ell)[\phi] \subseteq H_c^i(X, \ov\BQ_\ell)$ be the subspace on which $T_r^F$ acts via the character $\phi$, which is a sub-representation of $G_r^F$. The associated (virtual) deep level Deligne-Lusztig representation of $G_r^F$ is given by \[\CR_{T_r}^{G_r}(\phi) = H_c^*(X, \ov\BQ_\ell)[\phi] := \sum_{i \in \BZ} (-1)^i H_c^i(X, \ov\BQ_\ell)[\phi].\]

Similarly, we can define \[\tX = \tX_{T, B, \L, r} = \{h \in \tG_r; h\i F(h) \in F\CI_{\L, r}^\dag\},\] where we view $\CI_{\L, r}^\dag \subseteq U_r G_{0+:r}$ as a subset of $\tG_r$ in the natural way. Note that $\tX$ admits a natural action of $\tG_r^F \times \tT_r^F$ by left/right multiplication. Let $\tphi$ be the pull-back of $\phi$ under the natural map $\tT_r^F \to T_r^F$. Then we can define a virtual $\tG_r^F$-module \[\CR_{T_r}^{\tG_r}(\tphi) = H_c^*(\tX, \ov\BQ_\ell)[\tphi]\] in a similar way.

Let $\phi^+ = \phi|_{T_{0+:r}^F}$ and let $\phi_+$ be the character of $T_r^F$ such that $\phi_+|_{T_{0+:r}^F} = \phi^+$ and $\phi_+ |_{T_0^F} = 1$. We define $\tphi^+$ and $\tphi_+$ in a similar way.
\begin{theorem} \label{Green}
    Let $g = s u \in G_r^F$ be the Jordan decomposition with $s$ semisimple and $u$ unipotent. Then \[\tr(g; \CR_{T_r}^{G_r}(\phi)) = \frac{1}{|Z^\circ(s)_r^F|} \sum_{x \in G_r^F, x\i s x \in T_r}  \phi(x\i s x) \tr(x\i u x; \CR_{T_r}^{Z^\circ(x\i s x)_r}(\phi_+)).\] Moreover, a similar equality holds with $(G_r, T_r, \phi)$ replaced by $(\tG_r, \tT_r, \tphi)$.
\end{theorem}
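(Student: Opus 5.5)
We follow the classical Deligne--Lusztig character formula argument (as in \cite[Theorem 4.2]{DL76}), adapted to the deep level setting as in \cite{Nie24}. The approach is the Deligne--Lusztig fixed point formula: for a finite-order automorphism $\sigma$ of a separated finite type variety $Y$ with Jordan decomposition $\sigma = s u$ ($s$ of order prime to $p$, $u$ of $p$-power order), we have $\tr(\sigma; H_c^*(Y)) = \tr(u; H_c^*(Y^s))$. We apply it on $X$ to $\sigma = (g, t\i)$ and average over $t \in T_r^F$ against $\phi(t)$:
\[\tr(g; \CR_{T_r}^{G_r}(\phi)) = \frac{1}{|T_r^F|}\sum_{t \in T_r^F} \phi(t)\i \tr\bigl((g,t); H_c^*(X)\bigr).\]
The pair $(g,t)$ has semisimple part $(s, t_s)$ and unipotent part $(u, t_u)$, where $t = t_s t_u$ with $t_s \in T_0^F$ (a Teichm\"uller-type lift) and $t_u \in T_{0+:r}^F$.

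The main step is to describe the fixed points $X^{(s, t_s)} = \{h \in X; s h t_s = h\}$, i.e.\ $h\i s h = t_s\i \in T_r$. Using Lemma \ref{stable}-type arguments (centralizers of $s$ in $G_r$ are $Z_G(s)_r$, whose $F$-points and connected components are controlled since $s$ has order prime to $p$), we decompose
\[X^{(s,t_s)} = \bigsqcup_{x \in Z^\circ(s)_r^F \backslash \{x \in G_r^F;\, x\i s x = t_s\i\}} Z^\circ(s)_r\, x \cap X.\]
Next, each piece $Z^\circ(s)_r x \cap X$ should be identified, via $h \mapsto h x\i$ conjugated appropriately, with the deep level Deligne--Lusztig variety for $Z^\circ(x\i s x)$ attached to the datum $\CI_{\L, r}^\dag \cap Z^\circ(x\i s x)_r$. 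This is where I expect the main obstacle: one must check that intersecting $\CI_{\L,r}^\dag$ with the centralizer yields again a group of the same shape for the centralizer, and that the resulting variety computes $\CR_{T_r}^{Z^\circ(x\i s x)_r}(\phi_+)$ independently of the Howe data. This is reasonable because $\phi_+$ has trivial depth-zero part and $x\i s x \in T_0$ commutes with $T$, so the root subgroups meeting $Z^\circ(x\i s x)$ are exactly those for roots $\a$ with $\a(x\i s x)=1$, and the product decomposition of $\CI^\dag_{\L,r}$ over affine roots restricts. Here I would invoke the independence results of \cite{Nie24}, which give independence of the choice of Borel and admissible Iwahori-type subgroup. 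If needed, I would use the affine-root parametrization of \S\ref{subsec:para} to verify that the intersection is of the same shape.

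Granting this, summing over $t_u$ with $\phi(t_u)\i$ extracts the $\phi^+$-isotypic part, which on the centralizer corresponds to $\phi_+$, since $T_0^F$-parts are handled by the $t_s$-sum. Summing over $t_s = (x\i s x)\i$ gives the factor $\phi(x\i s x)$. Finally, reindexing the sum over cosets $Z^\circ(s)_r^F x$ as a sum over all $x$ produces the factor $1/|Z^\circ(s)_r^F|$, and we use $|T_r^F|$-normalizations to cancel. For $(\tG_r, \tT_r, \tphi)$ the same argument applies verbatim, since $\tG_r$ is again a semidirect product of $G_{0+:r}$ with a connected reductive group over $\ov\BF_q$, and the root-subgroup description is identical.
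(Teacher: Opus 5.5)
Your proposal is correct and follows the same Deligne--Lusztig fixed-point argument as the paper: the paper first writes $\tr(g;\CR_{T_r}^{G_r}(\phi))$ in terms of $\frac{1}{|T_0^F|}\tr(x\i u x; H_c^*(X^{x\i s x})[\phi^+])$, then rewrites this as $\tr(x\i u x;\CR_{T_r}^{Z^\circ(x\i s x)_r}(\phi_+))$ using that traces of unipotent elements depend only on $\phi^+$, which is the same observation as your collapse of the $t_s$-sum to $t_s=1$ at unipotent elements. One small correction: the decomposition of $X^{(s,t_s)}$ comes from Lang's theorem in the connected group $Z^\circ(t_s)_r$, applied after noting that $h\i F(h)$ is a unipotent element of $Z_{G_r}(t_s)$ and hence lies in $Z^\circ(t_s)_r$, not from Lemma \ref{stable}.
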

\begin{proof}
    We have \[\tr(g; \CR_{T_r}^{G_r}(\phi)) = \frac{1}{|T_0^F| \cdot |Z^\circ(s)_r^F|} \sum_{x \in G_r^F, x\i s x \in T_r}  \phi(x\i s x) \tr(x\i u x; H_c^*(X^{x\i s x}, \ov\BQ_\ell)[\phi^+]).\]
    In particular, if $g = u$ is unipotent, then $\tr(g; \CR_{T_r}^{G_r}(\phi))$ only depends on the restriction $\phi^+$ and hence \[\frac{1}{|T_0^F|} \tr(g; H_c^*(X, \ov\BQ_\ell)[\phi^+]) = \frac{1}{|T_0^F|} \sum_\l \tr(g; \CR_{T_r}^{G_r}(\l)) = \tr(g; \CR_{T_r}^{G_r}(\phi_+)),\] where $\l$ ranges over characters of $T_r^F$ such that $\l^+ = \phi^+$. Then the statement follows by noticing that $\CR_{T_r}^{Z^\circ(x\i s x)_r}(\phi_+) \cong H_c^*(X^{x\i s x}, \ov\BQ_\ell)[\phi_+]$.    
\end{proof}

%Let $M \subseteq G$ be the subgroup over $k$ whose affine roots are $f \in \tPhi$ such that $f(\bx) \in \BZ$. In particular, The reductive quotient of $M$ at $\bx$ is isomorphic to $G_0$. Let $\pi: M' \to M$ be the simply connected covering of the derived subgroup of $M$. Suppose $p$ is sufficiently large. Then the covering induces an isomorphism $M'_{0+:r} \cong M_{0+:r}$. Let $\tG_r = (M_{0:r}'  \ltimes G_{0+:r}) / \Delta M_{0+:r}$, where $\Delta M_{0+:r} = \{(x\i \ltimes x); x \in M_{0+:r}\}$ is a normal subgroup of $M_{0:r}'  \ltimes G_{0+:r}$. There is a group homomorphism $\tG_r \to G_r$ induced by $(m, g) \mapsto \pi(m) g$.

%Let $\tT_r \subseteq \tG_r$ be the inverse image of $T_r$ and let $\tphi$ be the pull-back of $\phi$ under the natural homomorphism $\tT_r \to T_r$. 
%\begin{lemma}The natural homomorphism $\tG_r \to G_r$ induces bijections $\tG_{0+:r} \cong G_{0+:r}$, $\tG_{r, \unip} \cong G_{r, \unip}$ and $\tG_r / \tT_{r, \red} \cong G_r / T_{r, \red}$.\end{lemma}

%By replacing the triple $(G_r, T_r, \phi)$ with $(\tG_r, \tT_r, \tphi)$ in the previous constructions, we may assume further that $G_r^\th$ is connected and hence Proposition \ref{intertwine}.

\subsection{A comparision result}
Consider the following variety \[Y = Y_{T, B, \L, r} = \{h \in G_r; h\i F(h) \in T_0 F\CI_{\L, r}^\dag\} / T_0,\] which admits an action of $G_r^F \times T_{0+:r}^F$ by left/right multiplication.

\begin{lemma} \label{equiv}
    The natural map $G_r \to G_r / T_0$ (resp. $\tG_r \to \tG_r / \tT_0 \cong G_r / T_0$) induces a $G_r^F \times T_{0+:r}^F$-equivariant isomorphism $X / T_0^F \cong Y$ (resp. $\tG_r^F \times T_{0+:r}^F$-equivariant isomorphism $\tX / \tT_0^F \cong Y$).
\end{lemma}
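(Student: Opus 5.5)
The plan is to check that the quotient map $q: G_r \to G_r/T_0$ sends $X$ into $Y$, that it is constant exactly on $T_0^F$-orbits, and that it is surjective onto $Y$. We then upgrade the resulting bijection $X/T_0^F \to Y$ to an isomorphism by a Lang-torsor argument.

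First, if $h \in X$ then $h^{-1}F(h) \in F\CI_{\L,r}^\dag \subseteq T_0 F\CI_{\L,r}^\dag$. For $t \in T_0$ we have $(ht)^{-1}F(ht) = t^{-1} h^{-1}F(h) F(t)$. Since $T_0$ normalizes $\CI_{\L,r}^\dag$ (it normalizes $U_r$, $\ov U_r$, the groups $G^i_{s:r}$ and $\CT_{\L,r}$), the set $T_0 F\CI^\dag_{\L,r}$ is stable under the twisted action $z \mapsto t^{-1} z F(t)$. So the condition defining $Y$ is right $T_0$-invariant, and $q(X) \subseteq Y$. The map $X \to Y$ is clearly $G_r^F \times T_{0+:r}^F$-equivariant, using that $T_{0+:r}$ is normal in $T_r$ and commutes with $T_0$ (the group $T_r$ is abelian).

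Surjectivity. Let $h$ satisfy $h^{-1}F(h) = s\, F(i)$ with $s \in T_0$ and $i \in \CI^\dag_{\L,r}$. By Lang's theorem for the connected group $T_0$, write $s = t F(t)^{-1}$. Then
\[(ht)^{-1}F(ht) = t^{-1} s F(i) F(t) = F(t)^{-1} F(i) F(t) = F(t^{-1} i t) \in F\CI^\dag_{\L,r},\]
so $ht \in X$.

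Fibers. Suppose $h, ht \in X$ with $t \in T_0$. Then $t^{-1} a F(t) \in F\CI^\dag$ for $a = h^{-1}F(h) \in F\CI^\dag$, hence $t^{-1}F(t) \in F\CI^\dag_{\L,r} \cap T_0$. This intersection is trivial: modulo $G_{0+:r}$, the group $\CI^\dag_{\L,r}$ maps into $U_0$, and the image of $\CT_{\L,r}$ lies in $G_{0+:r}$. So $F(t) = t$ and $t \in T_0^F$. Hence $X \to Y$ induces a bijection $X/T_0^F \to Y$.

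Isomorphism. Consider
\[\tilde Y = \{h \in G_r;\ h^{-1}F(h) \in T_0 F\CI^\dag_{\L,r}\},\]
whose quotient by $T_0$ is $Y$. The map $X \times T_0 \to \tilde Y$, $(h,t) \mapsto ht$, is surjective and identifies $\tilde Y$ with $X \times^{T_0^F} T_0$: if $ht = h't'$ then $t't^{-1} \in T_0^F$ by the fiber computation. Quotienting by $T_0$ gives $X/T_0^F \cong (X \times^{T_0^F} T_0)/T_0 \cong Y$. Because $T_0^F$ is finite acting freely, and $\tilde Y \to Y$ is a $T_0$-torsor, this is an isomorphism of varieties and not just a bijection. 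Concretely, $\tilde Y \cong X \times^{T_0^F} T_0$ is étale-locally a product, and descent along the smooth map to $Y$ gives the isomorphism.

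The $\tG_r$ version runs the same way with $\tT_0$ in place of $T_0$. The map $\tG_r/\tT_0 = (G_{0+:r}\rtimes \tG_0)/\tT_0 \to G_r/T_0$ is an isomorphism because $\tG_0/\tT_0 \cong G_0/T_0$ (both are the flag-type quotient; $\pi$ induces $G_\sc/T_\sc \cong G/T$). The defining condition on $h \in \tG_r$ depends only on $\pi(h)^{-1}F(\pi(h))$ modulo $T_0$, by the same normalization argument. The fiber computation uses $\tT_0 \cap \pi^{-1}(F\CI^\dag) \subseteq \ker \pi \cap \tT_0$, together with $F$-stability.

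The main obstacle is this last point: controlling the finite kernel of $\pi$ on $\tT_0$. One must check that $t^{-1}F(t) \in \ker\pi$ together with $ht \in \tX$ forces $t \in \tT_0^F$. This holds because the membership $(ht)^{-1}F(ht) \in F\CI^\dag$ is tested inside $\tG_r$, where $F\CI^\dag \cap \tT_0 = 1$ directly.
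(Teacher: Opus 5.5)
Your proposal is correct and follows the same route as the paper. Lang's theorem for $T_0$ (resp.\ $\tT_0$) gives surjectivity, and the triviality of $T_0\cap F\CI_{\L,r}^\dag$ (resp.\ of $\tT_0\cap F\CI_{\L,r}^\dag$ inside $\tG_r$) shows the fibres are exactly $T_0^F$-orbits; you also supply the $T_0$-stability of the condition defining $Y$ and the upgrade from a bijection to an isomorphism of varieties, neither of which the paper spells out. That upgrade is only sketched in your write-up, but it becomes precise once you note that taking the $T_0$-component of $h^{-1}F(h)$ defines a morphism $\sigma\colon \tilde Y\to T_0$ with $\sigma(ht)=\sigma(h)t^{-1}F(t)$ and $X=\sigma^{-1}(1)$. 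Then $X\times T_0\to\tilde Y$ is the pull-back along $\sigma$ of the Lang torsor $t\mapsto tF(t)^{-1}$, hence a $T_0^F$-torsor.
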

\begin{proof}
    We only show the second isomorphism. Let $g T_0 \in Y$. Using the natural isomorphism $\tG_r / \tT_0 \cong G_r / T_0$, we may assume further that $g = \pi(h)$ for some $h \in \tG_r$. By definition, we deduce that \[h\i F(h) \in \pi\i(g\i F(g)) \subseteq \pi\i(T_0 F \CI_{\L, r}^\dag) = \tT_0 F \CI_{\L, r}^\dag.\] By Lang's theorem there exists $t \in \tT_0$ such that $(ht)\i F(ht) \in F \CI_{\L, r}^\dag$. Hence $h t \in \tX$ and the map $\tX \to Y$ is surjective.

    Assume $h, h' \in \tX$ such that $\pi(h) T_0 = \pi(h') T_0$. Then $h' = h t$ for some $t \in \tT_0$. By definition, we have \[(ht)\i F(ht) \in (t\i F\CI_{\L, r}^\dag t t\i F(t)) \cap F\CI_{\L, r}^\dag = F\CI_{\L, r}^\dag t\i F(t) \cap  F\CI_{\L, r}^\dag,\] which implies that $t\i F(t) = 1$ since  $\tT_0 \cap F\CI_{\L, r}^\dag = \{1\}$ is trivial. Thus $t \in \tT_0^F$ and the map $\tX / \tT_0^F \to Y$ is injective as desired. 
\end{proof}

\begin{corollary} \label{mod-T}
   Suppose that $\phi$ is also trivial over $T_0^F$. Then  $\CR_{T_r}^{G_r}(\phi) \cong \CR_{\tT_r}^{\tG_r}(\tphi)$ as virtual $\tG_r^F$-modules. 
\end{corollary}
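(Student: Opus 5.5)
The plan is to derive the statement directly from Lemma \ref{equiv}, by using the trivial action of $T_0^F$ to pass to the quotient variety $Y$.

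First, since $\phi$ is trivial on $T_0^F$, the $\phi$-isotypic part of $H_c^*(X, \ov\BQ_\ell)$ for $T_r^F$ is contained in the $T_0^F$-invariants. For a finite group $T_0^F$ acting freely on $X$ (it acts freely by right multiplication), we have $H_c^i(X,\ov\BQ_\ell)^{T_0^F} \cong H_c^i(X/T_0^F, \ov\BQ_\ell)$. This isomorphism is compatible with the action of $G_r^F \times T_{0+:r}^F$, because these actions commute with that of $T_0^F$. Note that $T_r^F = T_{0+:r}^F \rtimes T_0^F$, and in fact it is a direct product since $T$ is commutative. A character $\phi$ trivial on $T_0^F$ is therefore the same as the character $\phi^+$ of $T_{0+:r}^F$ extended trivially. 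Hence
\[\CR_{T_r}^{G_r}(\phi) = H_c^*(X)[\phi] \cong H_c^*(X/T_0^F)[\phi^+] \cong H_c^*(Y)[\phi^+]\]
as virtual $G_r^F$-modules, where the last isomorphism is Lemma \ref{equiv}.

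Next I do the same for the cover. The character $\tphi$ is the pullback of $\phi$, so it is trivial on $\tT_0^F$, and its restriction to $T_{0+:r}^F$ is again $\phi^+$; here I use that $\tT_r = T_{0+:r} \rtimes \tT_0$ and that $\pi$ is the identity on $T_{0+:r}$. The group $\tT_0^F$ acts freely on $\tX$, so the same argument together with the second isomorphism of Lemma \ref{equiv} gives $\CR_{\tT_r}^{\tG_r}(\tphi) \cong H_c^*(Y)[\phi^+]$ as virtual $\tG_r^F$-modules. Here $\tG_r^F$ acts on $Y$ through $\pi: \tG_r^F \to G_r^F$, which is exactly how Lemma \ref{equiv} makes the isomorphism $\tX/\tT_0^F \cong Y$ equivariant.

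Combining the two identifications gives $\CR_{T_r}^{G_r}(\phi) \cong \CR_{\tT_r}^{\tG_r}(\tphi)$ as virtual $\tG_r^F$-modules, where the left side is viewed as a $\tG_r^F$-module via $\pi$. The only delicate point is bookkeeping. I must check that the $\tG_r^F$-action on $\tX/\tT_0^F$ corresponds to the $\pi$-pulled-back $G_r^F$-action on $Y$. I must also check that taking $T_0^F$-invariants commutes with taking the $\phi^+$-isotypic part for $T_{0+:r}^F$; this holds because the two actions commute. Both checks follow from the equivariance already asserted in Lemma \ref{equiv} and the freeness of the finite-group quotients.
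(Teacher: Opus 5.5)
Your proposal is correct and is essentially the paper's argument. Both proofs identify $\CR_{T_r}^{G_r}(\phi)$ and $\CR_{\tT_r}^{\tG_r}(\tphi)$ with the $\phi^+ = \tphi^+$-isotypic part of $H_c^*(X/T_0^F) \cong H_c^*(Y) \cong H_c^*(\tX/\tT_0^F)$ via Lemma \ref{equiv}; you simply spell out the finite-quotient/invariants step and the equivariance bookkeeping that the paper leaves implicit.
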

\begin{proof}
   As $\phi$ is trivial over $T_0^F$, $\tphi$ is trivial over $\tT_0^F$. Thus we have \[\CR_{T_r}^{G_r}(\phi) \cong H_c^*(X/T_0^F, \ov\BQ_\ell)[\phi^+] \cong H_c^*(\tX / \tT_0^F, \ov\BQ_\ell)[\tphi^+] \cong \CR_{\tT_r}^{\tG_r}(\tphi),\] where the second isomorphism from Lemma \ref{equiv} and the equality $\phi^+ = \phi|_{T_{0+:r}^F} = \tphi|_{T_{0+:r}^F} = \tphi^+$. 
\end{proof}

\section{The multiplicity formula} \label{sec:proof-main}
Let notation be as in \S\ref{sec:pDL}. Let $w \in \Theta_{T_r}^F$ and $t \in T_0^{\th_w} \subseteq T_r$. We define \[\tPhi_{\phi, w}(t) = \bigsqcup_{i=0}^d \{f \in \tPhi; \a_f \in \Phi_{G^i}^+ \sm \Phi_{G^{i-1}}, n_f = s_{i-1}, \a_f(t) =1, \th_w(\a_f) = -\a_f, F\i(\a_f) < 0\}.\] Put $n_w(t) = n_{G, \phi, \th, w}(t) = |\tPhi_{\phi, w}(t)|$ and $n_w = n_w(1)$. We define \[\e_w = \e_{G, \phi, \th, w}: (T_0^{\th_w})^F \to \{\pm 1\}, \quad t \mapsto (-1)^{n_w + n_w(t)}.\] Using the natural projection $T_r \to T_0$, we also view $\e_w$ as a function on $(T_r^{\th_w})^F$.
\begin{proposition}
    The function $\e_w$ is a character of $(T_r^{\th_w})^F$.
\end{proposition}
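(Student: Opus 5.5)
The plan is to show that $\e_w(t)$ equals the value at $t$ of a single algebraic character of $T$, namely a sum of roots. A homomorphism from $(T_0^{\th_w})^F$ to $\{\pm 1\}$ then comes for free.

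Let $S_w$ denote the set $\tPhi_{\phi,w}(1)$. This is the finite set of $f \in \tPhi$ with $\a_f \in \Phi_{G^i}^+ \sm \Phi_{G^{i-1}}$ and $n_f = s_{i-1}$ for some $0 \le i \le d$, $\th_w(\a_f) = -\a_f$ and $F\i(\a_f) < 0$. Then $n_w = |S_w|$, and for each $t$ we have $\tPhi_{\phi,w}(t) = \{f \in S_w; \a_f(t) = 1\}$. Hence
\[ n_w + n_w(t) \equiv n_w - n_w(t) = |\{f \in S_w; \a_f(t) \ne 1\}| \pmod 2. \]
Note that $S_w$ depends only on $w$, not on $t$.

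\textbf{Step 1: $\a_f(t) = \pm 1$.} Fix $t \in T_0^{\th_w}$ and $f \in S_w$. Since $w \in \Theta_{T_r}$, the involution $\th_w$ preserves $T_r$, hence $T_0$, and acts on $X^*(T)$ by $(\th_w \chi)(s) = \chi(\th_w\i(s))$. Using $\th_w(t) = t$ and $\th_w(\a_f) = -\a_f$, we get
\[ \a_f(t) = \a_f(\th_w\i(t)) = (\th_w \a_f)(t) = \a_f(t)\i. \]
Therefore $\a_f(t) \in \{\pm 1\}$, and $\a_f(t) \ne 1$ exactly when $\a_f(t) = -1$. This is the one point where the definition of $\tPhi_{\phi,w}(t)$ really matters. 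I expect it to be the only delicate step: one has to make sure that the action of $\th_w$ on roots used in the definition is the one induced from its action on $T$. Whichever convention is used, $\th_w$ is an involution, so $\th_w = \th_w\i$ on $X^*(T)$ and the same computation goes through.

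\textbf{Step 2: the product formula.} By Step 1,
\[ \e_w(t) = (-1)^{|\{f \in S_w;\, \a_f(t) = -1\}|} = \prod_{f \in S_w} \a_f(t) = \chi_w(t), \qquad \chi_w := \sum_{f \in S_w} \a_f \in X^*(T). \]
Since $\chi_w$ is a homomorphism $T_0 \to \ov\BF_q^\times$, its restriction to $(T_0^{\th_w})^F$ is a homomorphism. By the displayed equality this restriction takes values in $\{\pm 1\}$, so $\e_w$ is a character of $(T_0^{\th_w})^F$. The multiplicity of an $\a$ in $S_w$ is irrelevant here.

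\textbf{Step 3: pulling back to $(T_r^{\th_w})^F$.} The reduction map $T_r \to T_0$ is a group homomorphism. It commutes with $\th_w$ and $F$, so it sends $(T_r^{\th_w})^F$ into $(T_0^{\th_w})^F$. Composing with the character from Step 2 shows that $\e_w$, viewed as a function on $(T_r^{\th_w})^F$, is a character.
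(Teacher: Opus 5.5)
Your proof is correct and follows essentially the same route as the paper. The paper's argument is exactly your Step 1 (that $\alpha_f(T_0^{\theta_w}) \subseteq \{\pm 1\}$ whenever $\theta_w(\alpha_f) = -\alpha_f$), after which it cites the proof of Lusztig's Proposition 2.3(c); your Steps 2--3 carry out that remaining step explicitly, by identifying $\epsilon_w$ with the restriction of the algebraic character $\sum_{f \in S_w} \alpha_f$ and pulling back along the $\theta_w$- and $F$-equivariant projection $T_r \to T_0$.
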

\begin{proof}
    Let $f \in \tPhi$ such that $\th_w(\a_f) = -\a_f$. Then $\a_f(T_0^{\th_w}) \in \{\pm 1\}$ and hence $\a_f(T_0^{\th_w, \circ}) = \{1\}$. The statement then follows from the proof of \cite[Proposition 2.3 (c)]{Lu90}.    
\end{proof}

For $w \in \Theta_{T_r}^F$ we define \begin{align*}
    \Theta_{T_r, \phi}^F &= \{w \in \Theta_{T_r}^F; \phi|_{(T_r^{\th_w})^F} = \e_w\}; \\ \Theta_{T_r, \phi, +}^F &= \{w \in \Theta_{T_r}^F; \phi|_{(T_{0+:r}^{\th_w})^F} = 1\}.
\end{align*}

For $w \in \Theta_{\tT_r}^F$ we put $n_w = n_{\pi(w)}$ and denote by $\te_w$ the pull-back of $\e_w$ under the natural projection $(\tT_r^{\tth_w})^F \to T_r^F$. Let $\Theta_{\tT_r, \tphi}^F = \{w \in \Theta_{\tT_r}^F; \tphi|_{(\tT_r^{\tth_w})^F} = \te_w\}$ and $\Theta_{\tT_r, \tphi, +}^F = \{w \in \Theta_{\tT_r}^F; \tphi|_{(T_{0+:r}^{\tth_w})^F} = 1\}$.

The following result is proved in \S\ref{sec:proof-tilde}.
\begin{theorem} \label{tilde-formula}
    We have \[\tag{a} \frac{1}{|(\tG_r^\tth)^F|}\sum_{g \in (\tG_r^\tth)^F} \tr(g; \CR_{\tT_r}^{\tG_r}(\tphi)) = \sum_{w \in \tT_r^F \backslash \Theta_{\tT_r, \tphi}^F / (\tG_r^\tth)^F} (-1)^{n_w}.\]
\end{theorem}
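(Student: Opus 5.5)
We follow Lusztig's strategy from \cite{Lu90}, as outlined in the introduction. First, since $\tG_r^\tth$ is connected, the quotient $\tG_r/\tG_r^\tth$ is a variety with an $F$-action, and $(\tG_r/\tG_r^\tth)^F = \tG_r^F/(\tG_r^\tth)^F$ by Lang's theorem. Write the character of $\CR_{\tT_r}^{\tG_r}(\tphi)$ as the alternating trace on $H_c^*(\tX)[\tphi]$. The left-hand side of (a) is then the $\tphi$-isotypic part of the Euler characteristic of $(\tG_r^\tth)^F\backslash \tX$. Equivalently, it is
\[\frac{1}{|\tT_r^F|}\sum_{t\in \tT_r^F}\tphi(t)\i\, \tr\bigl(t; H_c^*((\tG_r^\tth)^F\backslash\tX)\bigr).\]
Using the map $h\mapsto h\i F(h)$ and a Lang-type argument, as in \cite{Lu90}, we identify $(\tG_r^\tth)^F\backslash\tX$ with the variety
\[\tSigma=\{(x,y)\in \tG_r/\tG_r^\tth\times F\CI^\dag_{\L,r};\ F(x)=y\i x\}\]
(or an equivalent Lang-twisted form), carrying a right $\tT_r^F$-action.

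Next, stratify $\tG_r/\tG_r^\tth$ by the decomposition of Lemma \ref{decomposition} into the pieces $\tCC_w=\tCI_r\tCO_w$. This gives a locally closed $\tT_r^F$-stable decomposition $\tSigma=\sqcup_w\tSigma_w$, where $w$ runs over $\tT_r\backslash\Theta_{\tT_r}/\tG_r^\tth$. Only $F$-stable classes contribute, and these are represented by $w\in\Theta_{\tT_r}^F$, modulo $\tT_r^F$ and $(\tG_r^\tth)^F$. By additivity of compactly supported Euler characteristics, it suffices to show that the $\tphi$-part of $H_c^*(\tSigma_w)$ is $(-1)^{n_w}$ if $w\in\Theta_{\tT_r,\tphi}^F$ and $0$ otherwise.

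For each $\tSigma_w$, use Lemma \ref{parameter} to coordinatize $\tCC_w$. Its elements are written as $u_{E,w}(x)\cdot \tau w$ with $\tau\in\tT_0/\tT_0^{\tth_w}$, taking $E$ to be the appropriate admissible set of affine roots from $\CI_r^+$, together with the factor coming from $\CI_{\L,r}^\dag$. The defining equation is then a system over the affine-root coordinates $x_f$, and $\tT_r^F$ acts through $t\mapsto t\,\tth_w(t)\i$.

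We then run the reduction. First, for affine roots $f$ with $\tth_w(f)\neq f$, or with $\a_f$ not in the critical sets, the coordinates enter as affine-space fibrations or through free $\BG_a$-torsors. These either contribute a Tate twist or force vanishing of the $\tphi$-isotypic part unless $\tphi|_{(T_{0+:r}^{\tth_w})^F}=1$; this is the role of $\Theta_{\tT_r,\tphi,+}^F$ (the vanishing of \S\ref{sec:vanish}). After removing these, we are left with a variety $\Sigma_w^\BK$ whose positive-depth coordinates are the roots in $\tPhi_{\phi,w}(t)$, i.e.\ those with $n_f=s_{i-1}$ and $\tth_w\a_f=-\a_f$. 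On each such coordinate the generic character $\phi_{i-1}$ restricts to an Artin--Schreier sheaf composed with the quadratic form $z\mapsto z^2$, because $u_f^w$ involves the square-root commutator. Here the key fact applies: $R\G_c(\BG_a,f^*\CL)$ is one-dimensional and concentrated in degree $1$, so each such root contributes a sign $-1$. The Frobenius and torus action on this line yields $\a_f(t)$-dependence, which assembles into $\e_w$. The roots with $F\i(\a_f)<0$ are exactly those that survive after accounting for the Borel $\CI^\dag$-condition, which produces $(-1)^{n_w}$.

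Finally, the remaining depth-zero part is Lusztig's computation \cite{Lu90} for $\tT_0\subseteq\tG_0$ with the involution $\tiota_w$. It gives that the isotypic part is $1$ if $\tphi|_{(\tT_r^{\tth_w})^F}$ equals the twisting character $\te_w$, and $0$ otherwise. Summing over $w$ yields (a).

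The main obstacle is the middle reduction. One has to organize the affine roots in the order of \S\ref{subsec:root} so that, inductively on depth, the equation on each new coordinate is either a linear torsor or exactly the quadratic Artin--Schreier form. One also has to track the torus action precisely enough to obtain the character $\e_w$. I would first try an induction on the Howe datum index $i$, peeling off $G^i_{s_{i-1}:r}$ layers, using genericity of $\phi_{i-1}$ to get the needed nondegeneracy.
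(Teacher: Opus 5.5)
Your outline follows the paper's architecture: Proposition~\ref{intertwine}, the stratification of Lemma~\ref{decomposition}, reduction to $\Sigma_w^\tBK$, the quadratic Artin--Schreier sign, and Lusztig's depth-zero result. There are, however, two genuine gaps.

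\textbf{First gap: the per-stratum target is miscounted.} The strata $\tSigma_w$ are indexed by geometric double cosets $\tT_r\backslash\Theta_{\tT_r}/\tG_r^\tth$. The right-hand side of (a) sums over rational ones, and these are not in bijection with the $F$-stable geometric classes. An $F$-stable $\tT_r w\tG_r^\tth$ contains $|(\tT_0^{\tth_w})^F|\,|(\tT_0^{\tth_w,\circ})^F|\i$ rational double cosets (Lemma~\ref{orbit}). All of them have the same $\tth_w|_{\tT_r}$, hence the same $n_w$ and $\te_w$.

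So what must be shown is $\dim H_c^*(\tSigma_w,\ov\BQ_\ell)[\tphi]=(-1)^{n_w}|(\tT_0^{\tth_w})^F|\,|(\tT_0^{\tth_w,\circ})^F|\i$ for $w\in\Theta^F_{\tT_r,\tphi}$ (Proposition~\ref{coh-BK}). Your value $(-1)^{n_w}$, and your claim that the depth-zero isotypic part is $1$, undercount whenever $(\tT_0^{\tth_w}/\tT_0^{\tth_w,\circ})^F\neq 1$. An example is the diagonal torus of $\mathrm{SL}_2$ with $\tth_w$ acting on it by inversion.

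In the paper, this factor, the sign and $\e_w$ all come out of one fixed-point computation:
\begin{itemize}
\item write $\dim H_c^*(\Sigma_w^\tBK)[\tphi]=|\tT_0^F|\i\sum_{t\in\tT_0^F}\tphi(t\i)\dim H_c^*((\Sigma_w^\tBK)^t)[\tphi^+]$;
\item apply the positive-depth reduction to each fixed-point variety, which gives the sign $(-1)^{|\Psi_w''(t)|}$;
\item apply Theorem~\ref{Euler-zero}, which contributes $(-1)^{|\Phi_w''(t)|}|\tT_0^F|\,|(\tT_0^{\tth_w,\circ})^F|\i$;
\item use orthogonality of characters on $(\tT_0^{\tth_w})^F$.
\end{itemize}
Note in particular that the $i=0$ part of $n_w$ comes from Lusztig's formula, not from a quadratic Gauss sum.

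Separately, your assertion that only $F$-stable classes contribute needs proof. Since $\g\in\tCI_r\tCO_w$ while $F(\g)\in F\CI^\dag_{\L,r}\,\g$, a stratum with $F\tCO_w\neq T_{0+:r}\tCO_w$ need not be empty. The paper kills such strata via Lemma~\ref{F-fixe}, using an auxiliary torus action and \cite[Proposition 8.3]{Lu90}.

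\textbf{Second gap: the middle reduction lacks its key idea.} You flag this step as the main obstacle, and the mechanisms you name would not supply it. An equivariant affine fibration or free $\BG_a$-torsor only shifts cohomology; it never kills a $\tphi$-isotypic part. Vanishing requires extending the action of a subgroup of $\tT_r^F$ on which $\tphi$ is nontrivial to an action of a connected group. The paper does this as follows:
\begin{itemize}
\item stratify $\tSigma_w^i\sm\tSigma_w^{i-1}$ by the first nonzero coordinate $v_f$, $f\in E_{w,i}$;
\item by a commutator computation with $\tth_\psi$-fixed elements $\xi_\psi(z)$ of complementary depth $r_{i-1}-f$, show $v\xi_\psi(z)v\i\in\CI_{\L,r}^\dag\,\a_f^\vee(1+zv_f\varpi^{r_{i-1}})$ (Lemma~\ref{conjugation});
\item use this to extend the torus action on each stratum to the connected group $H_{\a_f,r_{i-1}}$ (Lemma~\ref{extension});
\item invoke genericity: $\tphi$ is nontrivial on $M_{\a_f,r_{i-1}}\subseteq H_{\a_f,r_{i-1}}^\circ$.
\end{itemize}
A variant with $H_{\a^\vee+\b^\vee,r_{i-1}}$ gives vanishing unless $\tth_w(\Phi_{G^i})=\Phi_{G^i}$. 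The condition $\tphi|_{(T_{0+:r}^{\tth_w})^F}=1$ is a separate, easy step: translate $\t$ by the connected group $T_{0+:r}^{\tth_w}$ (Lemma~\ref{trivial}).

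Likewise, the quadratic form does not come from the square root in $u_f^w$. After passing to the Heisenberg quotient $\tBK$ and the Lang-map Cartesian square (Lemma~\ref{cartesian}), the relevant function is the commutator pairing $\sum_\a[z_\a,{}^{\tth_\hpsi}z_{-\tth_w(\a)}]\in\BT^+$. The terms with $-\tth_w(\a)\neq\a$ are bilinear and are removed by Proposition~\ref{criterion}. Only the roots with $\a=-\tth_w(\a)$ survive, giving $d_\a z_\a^2$ after the sign twist $\nu$ on the $\G_w$-cover (Lemma~\ref{split}).

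Without these ingredients the proposal is a road map rather than a proof.
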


Let $\tG_{r, \unip}$ and $G_{r, \unip}$ denote the sets of unipotent elements in $\tG_r$ and $G_r$ respectively.
\begin{corollary} \label{sum-unip}
    We have \[\sum_{g \in \tG_{r, \unip}^F} \tr(g; \CR_{\tT_r}^{\tG_r}(\tphi_+)) = |\tT_r^F|\i \sum_{w \in \Theta_{\tT_r, \tphi, +}^F} |(\tT_{0+:r}^{\tth_w})^F| (-1)^{n_w}.\] 

    As a consequence, we have \[\sum_{g \in G_{r, \unip}^F} \tr(g; \CR_{T_r}^{G_r}(\phi_+)) = |T_r^F|\i \sum_{w \in \Theta_{T_r, \phi, +}^F} |(T_{0+:r}^{\th_w})^F| (-1)^{n_w},\]
\end{corollary}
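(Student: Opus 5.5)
The plan is to derive the unipotent sum from Theorem \ref{tilde-formula} by averaging over characters, in the style of Lusztig's argument in \cite{Lu90}.

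\textbf{Step 1: reduce to unipotent elements.} Apply Theorem \ref{tilde-formula}(a) to every character $\l$ of $\tT_r^F$ with $\l^+ = \tphi^+$, i.e.\ $\l = \tphi_+\chi$ with $\chi$ a character of $\tT_r^F/\tT_{0+:r}^F \cong \tT_0^F$. Summing over $\l$ and using the character formula (Theorem \ref{Green}) for each $g \in (\tG_r^\tth)^F$ with Jordan decomposition $g = su$, the sum $\sum_\l \phi(x\i s x)$ over $\chi$ vanishes unless $x\i s x$ lies in the kernel of all such $\chi$. This forces $s \in$ the center-type part; more precisely, by orthogonality only $g$ with $s=1$ survive, up to the factor $|\tT_0^F|$. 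Then
\[\sum_{\l} \sum_{g \in (\tG_r^\tth)^F} \tr(g;\CR_{\tT_r}^{\tG_r}(\l)) = |\tT_0^F| \sum_{g \in (\tG_r^\tth)^F_{\unip}} \tr(g; \CR_{\tT_r}^{\tG_r}(\tphi_+)).\]
That alone gives only $\tth$-fixed unipotents. The statement instead sums over all of $\tG^F_{r,\unip}$, so I will use a different route.

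\textbf{Step 2: the actual route.} I take the symmetric pair $(\tG_r\times\tG_r, \Delta\tG_r)$ with the swap involution $\sigma$; this is the group case mentioned in the introduction. Here $\<\CR\boxtimes\CR', 1\>_{\Delta}$ is an inner product, which does not directly give $\sum_{g\,\unip}\tr$. So I instead sum Theorem \ref{tilde-formula}(a) over all $\l$ with $\l^+=\tphi^+$, for all involutions $\tth$ in the fixed class. The cleaner approach is the following. The map $(\tG_r^\tth)^F$-invariants picks out traces on $\tth$-fixed elements. Summing the right side of (a) over $\l$ gives
\[\sum_{w} (-1)^{n_w}\,\#\{\l : \l|_{(\tT_r^{\tth_w})^F} = \te_w\}.\]
Since $\te_w$ is trivial on $\tT_{0+:r}$, this count equals $\bigl[\tT_r^F : (\tT_r^{\tth_w})^F\,\tT_{0+:r}^F\bigr]$ when $w \in \Theta^F_{\tT_r,\tphi,+}$, and $0$ otherwise. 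Converting the double-coset sum into a sum over $\Theta^F$ multiplies each term by $|\tT_r^F|\,|(\tG_r^\tth)^F| / |(\tT_r^{\tth_w})^F|$. Combining these indices produces exactly the factor $|(\tT_{0+:r}^{\tth_w})^F|$ together with global constants. A Lang-type count (Lemma \ref{solution}) identifies $\bigsqcup_{\tth'} \{\tth'\text{-fixed unipotents}\}$, weighted appropriately, with all unipotent elements: every unipotent $u$ is $\tth_x$-fixed for exactly as many $x$ as the size of $(\tG_r^\tth)^F$ divided by the relevant centralizer factor, because square roots of unipotents exist uniquely for odd $q$.

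\textbf{Step 3: descend to $G_r$.} For the consequence, apply Corollary \ref{mod-T}, since $\phi_+$ is trivial on $T_0^F$. The map $\pi$ is a bijection on unipotent elements, $\tG_{r,\unip}\cong G_{r,\unip}$, because $G_{0+:r}$ is shared and $\pi$ is central isogeny on $G_\sc\to G_\der$. Lemma \ref{Theta} gives $\Theta_{\tT_r}=\pi\i(\Theta_{T_r})$. The remaining task is to match the sums over $\Theta$ together with the factors $|\tT_r^F|$ versus $|T_r^F|$, which will be done via the fibres of $\pi$.

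\textbf{Main obstacle.} The hardest step is the counting in Step 2: matching $\tth$-conjugates of unipotent elements with all unipotents while tracking the index factors exactly. I expect to handle it by averaging over $x\in\tG_r^F$ with $\tth_x$ in place of $\tth$ and using Lemma \ref{solution} to parametrize.
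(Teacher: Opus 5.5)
Your Step~1 is exactly the argument the paper gives. The proposal goes wrong at the point where you abandon it.

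\textbf{Why Step~2 cannot work.} The left-hand side of Theorem \ref{tilde-formula}(a) only involves $g\in(\tG_r^\tth)^F$. So summing (a) over all $\l$ with $\l^+=\tphi^+$ and using Theorem \ref{Green} produces the identity with $g$ running over the $\tth$-fixed unipotent elements $(\tG^{\tth}_{r,\unip})^F$. The set $\tG^F_{r,\unip}$ in the statement (and in line (i) of the paper's proof) appears to be a misprint for this. The $\tth$-fixed version is what is actually used in the proof of Theorem \ref{formula}, where the inner sum runs over $(Z^\circ(s)^\th_{r,\unip})^F$.

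The literal version is false: its left side does not depend on the involution, while its right side does. Take $r=0$, $G=\GL_2$, $T$ the diagonal torus and $\th=\Ad(\mathrm{diag}(1,-1))$, so $G^\th=T$.
\begin{itemize}
\item Left side: $Q_T(1)=q+1$ and $Q_T=1$ on the $q^2-1$ nontrivial unipotents, so the sum over all unipotents of $G^F$ is $q(q+1)$.
\item Right side: $|T^F|\i|\Theta_T^F|=(q-1)^{-2}(q+1)(q-1)^2=q+1$. Here all $n_w=0$ because $T$ is split.
\end{itemize}
The right side equals $Q_T(1)$, the sum over the $\th$-fixed unipotents, not the sum over all unipotents.

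The same example refutes the Lang-type count in Step~2. Each conjugate $\th_x=\Ad(x\,\mathrm{diag}(1,-1)\,x\i)$ has a maximal torus as its fixed-point group. So a regular unipotent element is fixed by no conjugate of $\th$, and no weighting of the sets of $\th_x$-fixed unipotents can recover all unipotents. Also, summing (a) over several involutions would change the right-hand side as well, since $\Theta$, the integers $n_w$ and the tori $\tT_r^{\tth_w}$ all depend on the involution.

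\textbf{The fix.} Keep Step~1 and combine it, for the fixed $\tth$, with the character count you already wrote in Step~2.
\begin{itemize}
\item For $w\in\Theta^F_{\tT_r}$, the number of $\l$ with $\l^+=\tphi^+$ and $\l|_{(\tT_r^{\tth_w})^F}=\te_w$ is $0$ unless $w\in\Theta^F_{\tT_r,\tphi,+}$. Otherwise it equals $[\tT_r^F:T_{0+:r}^F(\tT_r^{\tth_w})^F]=|\tT_0^F|/|(\tT_0^{\tth_w})^F|$, which is the paper's $|A(w)|$.
\item The double coset $\tT_r^F w(\tG_r^\tth)^F$ has $|\tT_r^F|\,|(\tG_r^\tth)^F|/|(\tT_r^{\tth_w})^F|$ elements. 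So passing from the double-coset sum to a sum over $\Theta^F_{\tT_r}$ \emph{divides} each term by this number; you wrote ``multiplies''.
\item Using $|(\tT_r^{\tth_w})^F|=|(\tT_0^{\tth_w})^F|\,|(T_{0+:r}^{\tth_w})^F|$, the factors $|\tT_0^F|$ and $|(\tG_r^\tth)^F|\i$ cancel against those from Step~1. This gives the first identity with $g\in(\tG^\tth_{r,\unip})^F$.
\end{itemize}
Your Step~3 then goes through as in the paper, once you note that $\pi$ is $F$-equivariant with $\pi\tth=\th\pi$. It therefore restricts to a bijection $(\tG^\tth_{r,\unip})^F\cong(G^\th_{r,\unip})^F$.
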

\begin{proof}
    We sum the identities (a) in Theorem \ref{tilde-formula} over all characters $\tilde\l$ such that $\tilde\l^+ = \tphi^+$. By Theorem \ref{Green}, $\sum_{\tilde \l, ~ \tilde\l^+ = \tphi^+} \tr(g, \CR_{\tT_r}^{\tG_r}(\tilde\l)) = 0$ unless $g$ is unipotent. Hence the sum of left hand sides is \[\tag{i} |\tT_0^F| |(\tG_r^\th)^F|\i \sum_{g \in \tG_{r, \unip}^F} \tr(g; \CR_{\tT_r}^{\tG_r}(\tphi_+)).\]  The sum of right hands is \[\tag{ii} |\tT_r^F|\i |(\tG_r^\tth)^F|\i \sum_{w \in \Theta_{\tT_r}^F} |A(w)| |(\tT_r^{\th_w})^F| (-1)^{n_w},\] where $A(w)$ is the set of character $\tilde\l$ of $\tT_r$ such that $\tilde\l^+ = \tphi^+$ and $\tilde\l|_{(\tT_r^{\tth_w})^F} = \te_w$. The first statement then follows from the equality between (i) and (ii) by noticing that $|A(w)| = 0$ if $\tphi |_{(\tT_{0+:r}^{\th_{\tw}})^F} \neq 1$, and $|A(w)| = |(\tT_0^{\tth_{\tw}})^F|\i |\tT_0^F|$ otherwise.  

    Now we show that second statement. Notice that $\tT_{0+:r} \cong T_{0+:r}$, $\tphi_+ = \phi_+$ and $\tG_{r, \unip} \cong G_{r, \unip}$. Applying Corollary \ref{mod-T} we have \[\sum_{g \in \tG_{r, \unip}^F} \tr(g; \CR_{\tT_r}^{\tG_r}(\tphi_+)) = \sum_{g \in G_{r, \unip}^F} \tr(g; \CR_{T_r}^{G_r}(\phi_+)).\] By Lemma \ref{Theta}, we have $\tT_r^F \backslash \Theta_{\tT_r}^F \cong T_r^F \backslash \Theta_{T_r}^F$ and hence $\tT_r^F \backslash \Theta_{\tT_r, \tphi, +}^F \cong T_r^F \backslash \Theta_{T_r, \phi, +}^F$. Moreover, as $(\tT_{0+:r}^{\tth_w})^F \cong (T_{0+:r}^{\th_w})^F$ and $n_w = n_{\pi(w)}$ for $w \in \Theta_{\tT_r}^F$, we have \[|\tT_r^F|\i \sum_{w \in \Theta_{\tT_r, \tphi, +}^F} |(\tT_{0+:r}^{\tth_w})^F| (-1)^{n_w} = |T_r^F|\i \sum_{w \in \Theta_{T_r, \phi, +}^F} |(\tT_{0+:r}^{\th_w})^F| (-1)^{n_w}.\] Here the statement follows.
\end{proof}

Now we prove the main result of the paper.
\begin{theorem} \label{formula}
    We have \[\frac{1}{|(G_r^\th)^F|}\sum_{g \in (G_r^\th)^F} \tr(g; \CR_{T_r}^{G_r}(\phi)) = \sum_{w \in T_r^F \backslash \Theta_{T_r, \phi}^F / (G_r^\th)^F} (-1)^{n_w}.\] 
\end{theorem}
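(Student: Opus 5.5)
We derive Theorem \ref{formula} from Theorem \ref{tilde-formula} and Corollary \ref{sum-unip}, following Lusztig's reduction in \cite{Lu90}: compute the left-hand side through the character formula of Theorem \ref{Green}, and reduce each semisimple contribution to a unipotent sum on a centralizer.

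\emph{Step 1: expand by Jordan decomposition.} Write $g = su$ for $g \in (G_r^\th)^F$. Both $s$ and $u$ are $\th$-fixed, because the Jordan decomposition is unique. Insert Theorem \ref{Green} and average over $(G_r^\th)^F$. Grouping by the $(G_r^\th)^F$-conjugacy class of $s$, the left-hand side becomes
\[\frac{1}{|(G_r^\th)^F|}\sum_{s}\frac{1}{|Z^\circ(s)_r^F|}\sum_{x \in G_r^F,\ x\i s x \in T_r}\phi(x\i s x)\sum_{u}\tr\bigl(x\i u x;\CR_{T_r}^{Z^\circ(x\i s x)_r}(\phi_+)\bigr),\]
where $s$ runs over $\th$-fixed semisimple elements and $u$ over unipotent elements of $Z(s)_r^F$ fixed by $\th$. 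Put $t = x\i s x \in T_r^F$. Then $\th(s) = s$ means $\th_{x\i}(t) = t$.

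Substitute $x\i = w$. The inner unipotent sum is over $(Z^\circ(t)_r^{\th_w})^F$. By Lemma \ref{stable}, applied to $H := Z_G^\circ(t)$ with the involution $\th_w$, the restriction of $\th_w$ lies in $H_{0+:r}^F \cdot \iota'$ for an $\BF_q$-involution $\iota'$ of $H$. So we are in the setting of the paper with $G$ replaced by $H$. After conjugating $t$ into $T_0^F$ (the $T_{0+:r}$-part of $t$ is absorbed using Lemma \ref{solution}), we may apply the second statement of Corollary \ref{sum-unip} to $(H, \th_w|_H)$. More precisely, we apply its $\th$-fixed analogue: the sum over $\th_w$-fixed unipotent elements of $H_r^F$. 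This analogue is obtained exactly as in Corollary \ref{sum-unip}, by summing Theorem \ref{tilde-formula} for $\tilde H$ over the characters $\tilde\l$ with $\tilde\l^+ = \phi^+$. It expresses each unipotent contribution as
\[|T_r^F|\i\sum_{v}|(T_{0+:r}^{\th_{v}})^F|\,(-1)^{n_{H,v}},\]
with $v$ running over $\Theta_{T_r, \phi,+}^F$ relative to $(H, \th_w)$.

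\emph{Step 2: reorganize and compare signs.} Reindex the resulting double sum over pairs $(t, v)$ as a sum over pairs $(t, w')$ with $w' \in \Theta_{T_r,\phi,+}^F$ and $t \in (T_r^{\th_{w'}})^F$. The normalizations $|Z^\circ(s)_r^F|$, $|(G_r^\th)^F|$ and the orbit sizes should collapse to
\[\sum_{w' \in T_r^F\backslash\Theta^F_{T_r,\phi,+}/(G_r^\th)^F}\frac{(-1)^{n_{w'}}}{|(T_r^{\th_{w'}})^F|}\sum_{t\in (T_r^{\th_{w'}})^F}\phi(t)\,(-1)^{n_{w'}(t)+n_{w'}} .\]
The key identity needed here is
\[n_{Z^\circ(t),\phi,\th,w'}(1) = n_{G,\phi,\th,w'}(t).\]
This holds because the affine roots counted in $n_{Z^\circ(t)}(1)$ are exactly those with $\a_f(t) = 1$, which is the definition of $\tPhi_{\phi, w'}(t)$. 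Hence the sign attached to $t$ equals $(-1)^{n_{w'}}\e_{w'}(t)$.

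\emph{Step 3: orthogonality.} By the proposition that $\e_{w'}$ is a character, the inner average of $\phi\,\e_{w'}$ over $(T_r^{\th_{w'}})^F$ equals $1$ if $\phi|_{(T_r^{\th_{w'}})^F} = \e_{w'}$, and $0$ otherwise. The condition $\phi|_{(T^{\th_{w'}}_{0+:r})^F} = 1$ is automatic in this case, since $\e_{w'}$ is trivial on the $0+$ part. So only $w' \in \Theta_{T_r,\phi}^F$ survive, each with sign $(-1)^{n_{w'}}$, which is the claimed formula.

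The main obstacle is the bookkeeping in Step 2. One must match the double cosets $T_r^F\backslash\Theta/(G_r^\th)^F$ of the centralizers $Z^\circ(t)$ with those of $G$, and check that the counting factors cancel exactly. The connectedness issues (the component group of $Z(s)$ versus $Z^\circ(s)$, and the non-connectedness of $G_r^\th$) also have to be absorbed. I would handle this as in \cite[\S3]{Lu90}: rewrite everything as a sum over $(x, t)$ with $x \in G_r^F$ and $\th_{x}(T_r) = T_r$ before taking orbits, and use Lemma \ref{Theta} to identify $T_{0+:r}$-cosets at positive depth.
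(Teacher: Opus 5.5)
Your proposal is correct and follows essentially the same route as the paper. Both arguments expand via Theorem \ref{Green} and apply Lemma \ref{stable} together with Corollary \ref{sum-unip} (read, as you rightly do, with the sum over $\th$-fixed unipotent elements) to $(Z^\circ(t), \th_{x\i})$; they then change variables $(v,t) = (x\i, vsv\i)$ and $w = yv$, where the $|Z^\circ(t)_r^F|$ factorizations cancel the normalization, use $n_{Z^\circ(t)}(1) = n_w(t)$, and conclude by orthogonality for $\phi\e_w$. One minor point: $t = x\i s x$ is semisimple and so already lies in $T_0^F$, so no appeal to Lemma \ref{solution} is needed there.
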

\begin{proof}
    We have \begin{align*}
        &\quad\ |(G_r^\th)^F|\i \sum_{g \in (G_r^\th)^F} \tr(g; \CR_{T_r}^{G_r}(\phi)) \\ &= |(G_r^\th)^F|\i \sum_{s \in (G_{r, ss.}^\th)^F} \sum_{x \in G_r^F, x\i s x \in T_r} |Z^\circ(s)_r^F|\i \phi(x\i s x) \\ &\quad\ \times \sum_{u \in (Z^\circ(s)_{r, \unip}^\th)^F} \tr(x\i u x; \CR_{T_r}^{Z^\circ(x\i s x)_r}(\phi_+)) \\ &=|(G_r^\th)^F|\i \sum_{s \in (G_{r, ss.}^\th)^F} \sum_{x \in G_r^F, x\i s x \in T_r} |Z^\circ(s)_r^F|\i \phi(x\i s x) \\ &\quad\ \times |T_r^F|\i \sum_{y \in Z^\circ(x\i s x)_r^F, y \in \Theta_{T_r, \th_{x\i}, \phi, +}^F} |(T_{0+:r}^{\th_{x\i}})^F| (-1)^{n_{Z^\circ(x\i s x), \phi, \th_{x\i}, y}} \\ &=|(G_r^\th)^F|\i  \sum_{t \in T_0^F} \phi(t) |Z^\circ(t)_r^F|\i \sum_{v \in G_r^F, v\i t v \in G_r^\th} |T_r^F|\i \sum_{y \in Z^\circ(t)^F, yv \in \Theta_{T_r, \phi, +}^F} |(T_{0+:r}^{\th_{yv}})^F| (-1)^{n_{yv}(t)} \\ &=|(G_r^\th)^F|\i |T_r^F|\i \sum_{t \in T_0^F} \phi(t) \sum_{w \in \Theta_{T_{0+:r}, \phi}^F, t \in G_r^{\th_w}} |(T_{0+:r}^{\th_w})^F| (-1)^{n_w(t)} \\ &=|(G_r^\th)^F|\i |T_r^F|\i \sum_{w \in \Theta_{T_r, \phi, +}^F} (-1)^{n_w} |(T_{0+:r}^{\th_w})^F| \sum_{t \in (T_0^{\th_w})^F} \phi(t) \e_w(t) \\ &=|(G_r^\th)^F|\i |T_r^F|\i \sum_{w \in \Theta_{T_r, \phi}^F} |(T_r^{\th_w})^F| (-1)^{n_w} \\ &= \sum_{w \in T_r^F \backslash \Theta_{T_r, \phi}^F / (G_r^\th)^F}  (-1)^{n_w},
    \end{align*} where the second equality follows from Lemma \ref{stable} and Corollary \ref{sum-unip} by taking $(G, \th, T,\phi) = (Z^\circ(t), \th_{x\i}, T, \phi)$; the third one uses the change of variables $(v, t) = (x\i, v s v\i)$ and the equality $n_{Z^0(t), \phi, \th_{x\i}, y}(t) = n_{G, \phi, \th, yv}(t) = n_{yv}(t)$; the fourth one uses the change of variable $w = y v$ and that $(y\i w)\i t (y\i w) = w\i t w \in G_r^\th$ if and only if $t \in G_r^{\th_w}$ for any $y \in Z^\circ(t)_r^F$; the fifth one follows from the definition $\e_w(t) = (-1)^{n_w + n_w(t)}$ for $t \in (T_0^{\th_w})^F$.    
\end{proof}

\section{A vanishing result} \label{sec:vanish}
Consider the variety \[\tSigma = \{(x, \g) \in F \CI_{\L, r}^\dag \times \tG_r/\tG_r^\tth; F(\g) = x \g\},\] on which $\tT_r^F$ acts by $t: (x, \g) \mapsto (txt\i, t\g)$.

\begin{proposition} \label{intertwine}
    The map $h \mapsto (F(h)\i h, h\i \tG_r^\tth)$ gives an isomorphism \[\tG_r^F \backslash \tX \cong \tSigma.\] In particular, $\dim \Hom_{(\tG_r^\th)^F}(R_{\tT_r}^{\tG_r}(\tphi), 1) = \dim H_c^*(\tSigma, \ov\BQ_\ell)[\tphi]$.
\end{proposition}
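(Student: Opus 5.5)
The plan is to verify first that the map is well defined and invariant, then build an inverse via Lang's theorem, and finally derive the multiplicity statement by Frobenius reciprocity. Let $h \in \tX$, so $h\i F(h) \in F\CI_{\L,r}^\dag$. Put $x = F(h)\i h$ and $\g = h\i \tG_r^\tth$.

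\emph{Well-definedness and invariance.} Since $F(h)\i h = F(h\i F(h))\i \cdot \ldots$ needs care, I will reconcile conventions: $h\i F(h) \in F\CI_{\L,r}^\dag$ gives $F(h)\i h \in (F\CI_{\L,r}^\dag)\i = F\CI_{\L,r}^\dag$, because $\CI_{\L,r}^\dag$ is a group. Thus $x \in F\CI_{\L,r}^\dag$. Since $\tth$ is $F$-stable, $\tG_r^\tth$ is $F$-stable, and
\[
F(\g) = F(h)\i \tG_r^\tth = (F(h)\i h)\, h\i \tG_r^\tth = x\g .
\]
Both $x$ and $\g$ are unchanged under $h \mapsto gh$ with $g \in \tG_r^F$. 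Hence the map factors through $\tG_r^F \backslash \tX$. It is also $\tT_r^F$-equivariant, where $t$ acts on $\tX$ by $h \mapsto h t\i$, since then $x \mapsto t x t\i$ and $\g \mapsto t\g$.

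\emph{Inverse.} Given $(x,\g) \in \tSigma$, lift $\g = k\tG_r^\tth$ with $k \in \tG_r$. From $F(\g) = x\g$ we get $k\i x\i F(k) \in \tG_r^\tth$. The key step is to adjust the lift by $\tG_r^\tth$ so that $F(k) = xk$ holds exactly. For $s \in \tG_r^\tth$, replacing $k$ by $ks$ changes $k\i x\i F(k)$ to $s\i (k\i x\i F(k)) F(s)$. This is a twisted-conjugation action of $\tG_r^\tth$ on itself, and Lang's theorem for $\tG_r^\tth$ makes it transitive. That theorem applies because $\tG_r^\tth$ is connected, which is exactly why we passed to $\tG_r$. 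So we may choose $k$ with $F(k) = xk$, and such $k$ is unique up to right multiplication by $(\tG_r^\tth)^F$.

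Now set $h = k\i$. Then $F(h)\i h = F(k)k\i = x \in F\CI_{\L,r}^\dag$, so $h \in \tX$, and $h$ is determined up to left multiplication by $(\tG_r^\tth)^F \subseteq \tG_r^F$. Here I must be careful: the fibres are $\tG_r^F$-orbits, not merely $(\tG_r^\tth)^F$-orbits. Let $h, h' \in \tX$ have the same image. Then $h'h\i$ is $F$-fixed, because $F(h')\i h' = F(h)\i h$ yields $h'h\i = F(h')F(h)\i = F(h'h\i)$. So the fibres are exactly the $\tG_r^F$-orbits, which gives a bijection. Since it is given by morphisms that are locally of the form above, it is an isomorphism of varieties; one can realize $\tG_r^F \backslash \tX$ as a quotient by a finite group acting freely.

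\emph{Multiplicity.} The connectedness point is the main obstacle, so I will verify it explicitly. The unipotent part $G_{0+:r}^\tth$ is connected via the parametrization in Lemma \ref{parameter}, being an affine space. The reductive part $\tG_0^\tiota$ is connected by Steinberg's theorem, since $G_\sc$ is simply connected. By Frobenius reciprocity,
\[
\Hom_{(\tG_r^\tth)^F}(\CR, 1) = \bigl(H_c^*(\tX)[\tphi]\bigr)^{(\tG_r^\tth)^F}.
\]
The pairing should be taken in the Grothendieck group, understood as the alternating dimension. A second application of Lang's theorem shows $(\tG_r^\tth)^F \backslash \tX \to \tG_r^F\backslash \tX$ has the correct relation via $\tG_r/\tG_r^\tth$. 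Concretely, $H_c^*$ of $\tX/(\tG_r^\tth)^F$ equals $H_c^*$ of $\tG_r^F \backslash (\tX \times \tG_r^F/(\tG_r^\tth)^F)$. Using $(\tG_r/\tG_r^\tth)^F = \tG_r^F/(\tG_r^\tth)^F$, again by Lang's theorem and connectedness, this identifies with the variety $\tSigma$. Taking $\tphi$-isotypic parts then gives the dimension formula.
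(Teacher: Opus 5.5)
Your construction of the inverse is exactly the paper's surjectivity argument: lift $\g = y\tG_r^\tth$, solve $F(y)\i x y = F(z)z\i$ with $z\in\tG_r^\tth$ by Lang's theorem, and take $h=(yz)\i$.

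The error is in the orbit bookkeeping around it. It is not true that $\g$ is unchanged under $h\mapsto gh$ for $g\in\tG_r^F$. Indeed $(gh)\i\tG_r^\tth = h\i g\i\tG_r^\tth$, and this equals $h\i\tG_r^\tth$ only when $g\in\tG_r^\tth$. Likewise, when you compare $h,h'$ with the same image you use only the first coordinate. Equality of the second coordinates forces $h'h\i\in\tG_r^\tth$, so the fibres are $(\tG_r^\tth)^F$-orbits. Your own Lang step had already shown this, since you found $h$ determined only up to left multiplication by $(\tG_r^\tth)^F$.

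So the map does not descend to $\tG_r^F\backslash\tX$, and the two spaces really differ. By Lang's theorem in $\tG_r$, $h\mapsto F(h)\i h$ identifies $\tG_r^F\backslash\tX$ with $F\CI_{\L,r}^\dag$. By contrast, the fibres of $\tSigma\to F\CI_{\L,r}^\dag$ are in bijection with $(\tG_r/\tG_r^\tth)^F$, which has cardinality $|\tG_r^F|/|(\tG_r^\tth)^F|$. The displayed quotient must therefore be read as $(\tG_r^\tth)^F\backslash\tX$. That is what the Lang argument (the paper's and yours) actually proves, and it is the version the multiplicity statement needs.

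With this correction, the second assertion is immediate. The $\tG_r^F$- and $\tT_r^F$-actions on $\tX$ commute, so $\dim\Hom_{(\tG_r^\tth)^F}(\CR_{\tT_r}^{\tG_r}(\tphi),1)$ is the alternating dimension of $\bigl(H_c^*(\tX,\ov\BQ_\ell)[\tphi]\bigr)^{(\tG_r^\tth)^F} = H_c^*((\tG_r^\tth)^F\backslash\tX,\ov\BQ_\ell)[\tphi]$. The $\tT_r^F$-equivariant isomorphism then transports this to $\tSigma$.

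Your final paragraph, via $\tG_r^F\backslash(\tX\times(\tG_r/\tG_r^\tth)^F)\cong\tSigma$, is a valid alternative route. That isomorphism uses only Lang's theorem in $\tG_r$. Connectedness of $\tG_r^\tth$ enters only to make $(\tG_r/\tG_r^\tth)^F$ a single $\tG_r^F$-orbit with stabilizer $(\tG_r^\tth)^F$. Note, however, that this route identifies $\tSigma$ with $(\tG_r^\tth)^F\backslash\tX$, which contradicts your first part.

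A smaller point: connectedness of $\tG_r^\tth$ also requires that $\tG_r^\tth\to\tG_0^\tiota$ be surjective, which comes from Lemma~\ref{solution}. Connectedness of the kernel and of $\tG_0^\tiota$ alone is not enough.
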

\begin{proof}
    It is clear that the map is injective. We show it is surjective. Let $(x, \g) \in \tSigma$ with $\g = y \tG_r^\tth$ for some $y \in \tG_r$. Then $F(y)\i x y \in \tG_r^\tth$. As $\tG_r^\tth$ is connected and $F$-stable, there exists $z \in \tG_r^\tth$ such that $F(y\i) x y = F(z) z\i$. Let $h = (y z)\i$. Then $F(h)\i h = x \in F\CI_{\L, r}^\dag$, $h \in \tX$ and $\g = h\i \tG_r^\tth$ as desired.    
\end{proof}

By Lemma \ref{decomposition}, there is a decomposition \[\tSigma = \sqcup_{w \in \tT_r \backslash \Theta_{\tT_r} / \tG_r^\th} \tSigma_w,\] where $\tSigma_w$ consists of elements $(x, \g) \in \tSigma$ such that $\g \in \tCC_w$.

Fix $w \in \Theta_{T_r}$. We write $A_w = A_{E, w}$, $\BA_w= \BA_{E, w}$, $u_w = u_{E, w}$ and so on for $E = \tPhi_r^{>0}$, see \S\ref{subsec:para}.
\begin{proposition} \label{isomorphism}
    For $w \in \Theta_{\tT_r}$ the map $(x, \psi) \to u_w(x) \psi$ gives an isomorphism $\BA_w \times \tCO_w \cong \tCC_w$.
\end{proposition}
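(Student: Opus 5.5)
We aim to show that every element of $\tCC_w = \tCI_r \tCO_w$ can be written uniquely as $u_w(x)\psi$ with $x \in \BA_w$ and $\psi \in \tCO_w = \tT_0 w \tG_r^\tth/\tG_r^\tth$. We first reduce to a statement about a single orbit point. Since $\tCI_r = \tCI_r^+ \tT_0$ (with $\tCI_r^+ = \CI_r^+$, viewed inside $\tG_r$) and $\tT_0$ normalizes $\CI_r^+$, we get $\tCC_w = \CI_r^+ \tT_0 w\tG_r^\tth/\tG_r^\tth$. Thus every point has the form $u\, t w \tG_r^\tth$ with $u \in \CI_r^+$ and $t \in \tT_0$. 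The stabilizer of $tw\tG_r^\tth$ in $\CI_r^+$ is $\CI_r^+ \cap \tG_r^{\tth_{tw}}$. Because $\tT_0$ normalizes $\CI_r^+$ and $\tth_{tw} = \Ad(t)\tth_w\Ad(t)\i$, this stabilizer equals $t(\CI_r^+)^{\tth_w}t\i$. It is essential here that $\tth_w$ (hence $\tth_{tw}$) preserves $\tT_r$. However, $\tth_w$ need not preserve $\CI_r^+$, so a correct formulation must be made.

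The key step is to show that the $\CI_r^+$-orbit map induces an isomorphism $\CI_r^+/(\CI_r^+)^{\tth_w} \to \CI_r^+ w\tG_r^\tth/\tG_r^\tth$, and that this isomorphism is parametrized by $u_w$. For the parametrization we apply Lemma \ref{parameter} with $E = \tPhi_r^{>0}$. We must first check that $E$ is $\tth_w$-admissible. The first two conditions are clear. The third condition holds because $\tth_w(f) \in \tPhi_r^{>0}$ already means that it lies in $E$. Lemma \ref{parameter} then gives $\BA_w \cong \CI_r^+/(\CI_r^+)^{\tth_w}$ via $u_w$. Some care is needed, since $G_r^E$ there is only a subset when $E$ is not $\tth_w$-stable; but $G_r^E = \CI_r^+$ is a group, so this is harmless.

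Next, we must show that the map $\BA_w \times \tCO_w \to \tCC_w$ is injective. Suppose $u_w(x)\, t w \tG_r^\tth = u_w(x')\, t' w\tG_r^\tth$. Projecting to level zero via $\tilde p_0$ and using the uniqueness part of Proposition \ref{level-zero}, together with the $\tT_0$-component of the Bruhat-type decomposition $\tB_0 = U_0\tT_0$, we reduce to the case where $t' w\tG_r^\tth = t w \tG_r^\tth$. The difficulty is that $U_0 t w \tG_0^\tiota$ may meet $U_0 t' w \tG_0^\tiota$ with $t\ne t'$ modulo the stabilizer. To handle this, I would use the fact that $\tT_0 \cap U_0 \tG_0^{\tiota_{w}} \subseteq \tT_0^{\tiota_w}$-type statements follow from Proposition \ref{level-zero}. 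Once $t = t'$, injectivity follows from the fact that $u_w$ parametrizes cosets of $(\CI_r^+)^{\tth_{tw}}$, after conjugating by $t$. Here we must verify that $\Ad(t)\circ u_w$ composed with the coset map is still a bijection onto $\CI_r^+/t(\CI_r^+)^{\tth_w}t\i$. This holds because $t$ normalizes $\CI_r^+$, so right multiplication is compatible.

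For surjectivity, write $u\,tw\tG_r^\tth = t\,(t\i u t)\, w \tG_r^\tth$ and decompose $t\i u t = u_w(x) h$ with $h \in (\CI_r^+)^{\tth_w}$. This gives $t\, u_w(x)\, w\tG_r^\tth$. It remains to move $t$ past $u_w(x)$, which is the main obstacle: $u_w(x)$ is not $\tT_0$-equivariant in general. I would resolve it by noting that $t u_w(x) t\i = u_w(x^t)$ for the induced linear torus action on the coordinates $x_f$, which rescales $x_f$ by $\a_f(t)$. This uses that the products $u_f$ are compatible with $\Ad(t)$ root by root, and it is why the parametrization was chosen root-wise. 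Finally, the map is a morphism, and its inverse is a morphism by construction, since the quotient $\CI_r^+ \to \CI_r^+/(\CI_r^+)^{\tth_w}$ admits the section $u_w$ from Lemma \ref{parameter}. This gives the isomorphism of varieties.
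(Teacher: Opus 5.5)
Your outline follows the paper's proof. Surjectivity comes from Lemma \ref{parameter}, and your rescaling identity $t\,u_w(x)\,t\i = u_w(x^t)$ correctly moves $t\in\tT_0$ past $u_w(x)$. Injectivity comes from first pinning down the $\tCO_w$-component and then applying Lemma \ref{parameter} again.

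The gap is in that middle step, which is the actual content of injectivity. From $u_w(x)\,tw\tG_r^\tth = u_w(x')\,t'w\tG_r^\tth$, applying $\tilde p_0$ only gives that $s := t\i t'$ lies in $U_0\,\tG_0^{\tth_w}$. The uniqueness clause of Proposition \ref{level-zero} says nothing here. It determines an element of $\Theta_{\tT_0}$ only up to left multiplication by $\tT_0$, and $tw_0$ and $t'w_0$ already lie in the same $\tT_0$-coset, so that clause holds trivially. Hence your claim that $\tT_0\cap U_0\tG_0^{\tth_w}\subseteq\tT_0^{\tth_w}$ ``follows from Proposition \ref{level-zero}'' is unsupported. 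You also never explain how a level-zero statement yields the level-$r$ equality $t'w\tG_r^\tth = tw\tG_r^\tth$.

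Both pieces are short. Write $s = ug$ with $u\in U_0$ and $\tth_w(g) = g$. Then $\tth_w(u) = \tau u$ with $\tau = \tth_w(s)s\i\in\tT_0$. The element $\tth_w(u)$ is unipotent, and the image of $\tau u\in\tB_0$ under $\tB_0\to\tB_0/U_0\cong\tT_0$ is $\tau$. So $\tau$ is a unipotent element of a torus, hence trivial, and $s$ is $\tth_w$-fixed at level zero.

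To lift to level $r$, note that $\tth_w$ preserves $\tT_r = T_{0+:r}\times\tT_0$ because $w\in\Theta_{\tT_r}$. It therefore preserves the subgroup $\tT_0$ of semisimple elements. So $\tth_w(s)s\i\in\tT_0$ has trivial image in $\tG_0$, and hence lies in $\tT_0\cap G_{0+:r} = \{1\}$. This gives $s\in\tG_r^{\tth_w} = w\tG_r^\tth w\i$, i.e.\ $t'w\tG_r^\tth = tw\tG_r^\tth$, which is the reduction the paper's proof carries out. Only after this does your final appeal to Lemma \ref{parameter}, after rescaling by $t$, give $x = x'$.
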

\begin{proof}
    By Lemma \ref{parameter} the map is surjective. It remains to show it is injective. Let $x, x' \in \BA_w$ and $s, s' \in \tT_0$ such that $u_w(x) s w \tG_r^\tth = u_w(x') s' w \tG_r^\tth$. Then we have $U_0 \tilde p_0(s) \tG_0^{\tth_w} = U_0 \tilde p_0(s')  \tG_0^{\tth_w}$. This implies that $\tilde p_0(s\i s') \in \tG_0^{\tth_w}$, that is, $s\i s' \tth_w(s\i s')\i \in G_{0:+}$. As $s\i s' \tth_w(s\i s')\i \in \tT_0$, we have $s\i s' = \tth_w(s\i s')$, and hence we may assume that $s = s'$. Then the injectivity also follows from Lemma \ref{parameter}. 
\end{proof}

Using Proposition \ref{isomorphism} we may write the subset $\tSigma_w$ as \[\tSigma_w = \{(x, v\psi) \in \CI_{\L, r}^\dag \times u_w(\BA_w)\tCO_w;  F(x v\psi) = v\psi\},\] on which $\tT_r^F$ acts by $t: (x, v\psi) \mapsto (txt\i, t v\psi)$. We set \[\CI_{\L, r}^+ = \CI_{\L, r}^\dag T_{0+:r} = T_{0+:r} \CI_{\L, r}^\dag \text{ and } \tPhi_{\L, r} = \{f \in \tPhi_r^{>0}; G_r^f \subseteq \CI_{\L, r}^+\}.\] For $0 \le i \le d$, let $\tPhi_{w, i} = (A_w \sm C_w) \cap (\tPhi_{G^i} \cup \tPhi_{\L, r})$, and let $\tSigma_w^i$ be the set of pairs $(x, v\psi) \in \tSigma_w$ such that $v_f = 0$ unless $f \in \tPhi_{w, i}$. Here for $v \in u_w(\BA_w)$ we denote by $v_f \in \BA_{f, w}$ the coordinate of $u_w\i(v) \in \BA_w$ at $f \in A_w \sm C_w$.

The following result is proved at the end of this section.
\begin{proposition} \label{cut}
    Let $1 \le i \le d$. We have $H_c^*(\tSigma_w^i, \ov\BQ_\ell)[\tphi] = 0$ unless $\tth_w(\Phi_{G^{i-1}}) = \Phi_{G^{i-1}}$, in which case $H_c^*(\tSigma_w^i, \ov\BQ_\ell)[\tphi] \cong H_c^*(\tSigma_w^{i-1}, \ov\BQ_\ell)[\tphi]$. 
\end{proposition}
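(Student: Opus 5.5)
We follow the usual Deligne–Lusztig vanishing argument: find a connected unipotent group (or an affine space) acting on $\tSigma_w^i$ so that $\tT_r^F$-isotypic parts are killed by a nontrivial character sum, or else peel off one layer of coordinates at a time. The coordinates $v_f$ of $\tSigma_w^i$ with $f \in \tPhi_{w,i} \sm \tPhi_{w,i-1}$ are exactly those with $\a_f \in \Phi_{G^i} \sm \Phi_{G^{i-1}}$ and $f \notin \tPhi_{\L,r}$. For such $f$ we have $0 < n_f \le s_{i-1}$, since $G^i_{s_{i-1}+:r}$-directions already lie in $\CI_{\L,r}^+$.

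We order these $f$ by the linear order of \S\ref{subsec:root} and eliminate them one at a time, from the largest $f$ downward. This uses Lemma \ref{parameter} to write $v = v' u_{f,w}(v_f)$ with the coordinate $v_f$ isolated. Rewriting the defining equation $F(xv\psi) = v\psi$ in terms of $x' = F(u_{f,w}(v_f))\, x\, \cdots$, the equation becomes linear in $v_f$ modulo terms that can be absorbed into $x \in F\CI_{\L,r}^\dag$, whenever $G_r^f \subseteq \CI_{\L,r}^\dag$ or $G_r^{\tth_w f}$ does. This lets us change variables and contract $v_f$, giving a vector bundle, which does not change cohomology.

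The problematic terms are the $f$ with $n_f = s_{i-1}$. At this depth $x$ runs over $G^i_{s_{i-1}:r}$ but not over $G^i_{s_{i-1}}$ modulo $s_{i-1}+$. Here the pairing of $v_f$ with $v_{\tth_w f}$ produces, through the commutator map into $(T^i_\der)_{r_{i-1}}$, a character $\tphi \circ \a_f^\vee$-type term, and the genericity of $\phi_{i-1}$ enters at this point.

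Now split into cases. Suppose $\tth_w(\Phi_{G^{i-1}}) \ne \Phi_{G^{i-1}}$. Then there is a root $\a \in \Phi_{G^{i-1}}$ with $\tth_w\a \notin \Phi_{G^{i-1}}$, so $\tth_w$ moves $T^{i-1}$-central directions. Hence the subtorus $(Z(G^{i-1})^\circ)_{r_{i-1}:r_{i-1}}$ does not lie in the $\tth_w$-stable part. Using the Howe datum, $\phi_{i-1}$ restricted to $(T^{i-1}_\der)^\perp$ at depth $r_{i-1}$ is $(G^{i-1},G^i)$-generic, so $\phi$ is nontrivial on the coroot images $\a^\vee(1+\varpi^{r_{i-1}}\cdot)$ for $\a \in \Phi_{G^i}\sm\Phi_{G^{i-1}}$. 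We then produce a free action of a connected group $H \cong \BG_a$ (one coordinate at depth $r_{i-1}$) on $\tSigma_w^i$ commuting with a subgroup $T' \subseteq \tT_r^F$. Through this action, $T'$ acts on $H_c^*$ via a trivial character, while $\tphi|_{T'}$ is nontrivial. This forces $H_c^*(\tSigma_w^i)[\tphi]=0$; equivalently, it is the standard vanishing $\sum_{z} \psi(\mathrm{tr}\, az) = 0$ with $a \ne 0$ for the Artin–Schreier sheaf. If instead $\tth_w$ preserves $\Phi_{G^{i-1}}$, then $\tth_w$ permutes $\Phi_{G^i}\sm\Phi_{G^{i-1}}$. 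The depth-$s_{i-1}$ coordinates then come in $\tth_w$-pairs, or are $\tth_w$-fixed with $\tth_w$ acting by $\pm1$, and after the elimination above each contributes a factor whose compactly supported cohomology is one-dimensional, of the form $\BA^1$ or $R\G_c(\BG_a, f^*\CL)$. These factors are absorbed, yielding $H_c^*(\tSigma_w^i)[\tphi] \cong H_c^*(\tSigma_w^{i-1})[\tphi]$ up to the sign and shift that are recorded separately in $n_w$.

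The main obstacle is the precise bookkeeping at depth $s_{i-1}$. We must show that the commutator terms between $v_f$ and $v_{\tth_w f}$ land in $\CT_{\L,r}$ or in $(T^i_\der)_{r_{i-1}}$, so that they pair with $\tphi$ as an Artin–Schreier character. We must also show that genericity makes this pairing nondegenerate precisely when $\tth_w$ fails to stabilize $\Phi_{G^{i-1}}$. For this we would first handle the case $r_{i-1}$ even, where $s_{i-1}$ is an integer, mimicking the Heisenberg-type computation in \cite{Nie24}, and then check that for $r_{i-1}$ odd all depth-$s_{i-1}$ coordinates already lie in $\CI_{\L,r}^\dag$.
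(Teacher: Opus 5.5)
\textbf{There is a genuine gap in the case $\tth_w(\Phi_{G^{i-1}})=\Phi_{G^{i-1}}$.} In that case the proposition says that the open part $\tSigma_w^i\sm\tSigma_w^{i-1}$, where some $v_f$ with $f\in E_{w,i}$ is nonzero, has zero $\tphi$-isotypic cohomology. This is a vanishing statement about open strata. A contraction of the $v_f$-coordinates cannot give it: $\tSigma_w^{i-1}$ is the closed locus $v_f=0$, not the base of a fibration, since $x$ is tied to $v$ through $F(xv\psi)=v\psi$.

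Your elimination step also never applies to the relevant coordinates. It needs $G_r^f$ or $G_r^{\tth_w(f)}$ to lie in $\CI_{\L,r}^\dag$. For $f\in E_{w,i}$ we have $f\notin\tPhi_{\L,r}$ by definition. The affine root $\tth_w(f)\le f$ has the same depth and a root in $\Phi_{G^i}\sm\Phi_{G^{i-1}}$. When $n_f=s_{i-1}$, the condition $\a_f<0$ forces $\a_{\tth_w(f)}<0$. So $\tth_w(f)\notin\tPhi_{\L,r}$ either.

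The quadratic Artin--Schreier complexes of Proposition \ref{square} belong to a later step. They come from the coordinates $\BH_\a$, which are positive roots at depth $s_{i-1}$ inside $\CI_{\L,r}^+$; these are $x$-variables, not $v$-variables. They produce the sign $(-1)^{|\Psi_w''|}$ in Proposition \ref{red-to-zero}. Your conclusion ``up to sign and shift recorded in $n_w$'' is therefore not the statement, which asserts an isomorphism with no sign.

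\textbf{The missing idea.} Stratify $\tSigma_w^i\sm\tSigma_w^{i-1}=\bigsqcup_{f\in E_{w,i}}\tSigma_{w,f}^i$ by the smallest $f$ with $v_f\neq0$.
\begin{itemize}
\item For $f\neq\tth_w(f)$, the complementary affine roots $r_{i-1}-f$ and $r_{i-1}-\tth_w(f)$ lie in $\tPhi_{\L,r}$. The symmetrized element $\xi_\psi(z)\in\tG_r^{\tth_\psi}$ built from $u_{r_{i-1}-f}(z)$ satisfies $v\xi_\psi(z)v\i\in\CI_{\L,r}^\dag\,\a_f^\vee(1+zv_f\varpi^{r_{i-1}})$ (Lemma \ref{conjugation}).
\item Because $v_f\neq0$, you may take $z=v_f\i p_f(L(t))$. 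This lets $H_{\a_f,r_{i-1}}$ act on the stratum, extending the $\tT_r^F$-action (Lemma \ref{extension}).
\item For $f=\tth_w(f)$, $L(t)$ is already $\tth_\psi$-fixed, so the same conclusion holds directly.
\item By genericity, $\tphi$ is nontrivial on $M_{\a_f,r_{i-1}}\subseteq H^\circ_{\a_f,r_{i-1}}$. Hence every stratum has zero $\tphi$-part, and excision gives $H_c^*(\tSigma_w^i)[\tphi]\cong H_c^*(\tSigma_w^{i-1})[\tphi]$.
\end{itemize}

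\textbf{The non-stable case.} Your idea of a connected group action extending part of $\tT_r^F$ is the right one, but the group must be chosen specifically. Take $\b\in\Phi_{G^i}\sm\Phi_{G^{i-1}}$ with $\a=\tth_w(\b)\in\Phi_{G^{i-1}}$, and use $H_{\a^\vee+\b^\vee,r_{i-1}}$.
\begin{itemize}
\item The cocharacter $\a^\vee+\b^\vee$ is $\tth_w$-fixed, so $L(t)$ lies in every $\tG_r^{\tth_\psi}$. It can then be absorbed via $x\mapsto {}^t(x[v,L(t)])$.
\item Since $\tphi$ is trivial on $M_{\a,r_{i-1}}$, we get $\tphi(M_{\a^\vee+\b^\vee,r_{i-1}})=\tphi(M_{\b^\vee,r_{i-1}})\neq1$.
\end{itemize}
Your appeal to $Z(G^{i-1})^\circ$ leaving the $\tth_w$-stable part does not supply such a $\tth_w$-fixed direction on which $\tphi$ is nontrivial.
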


As a consequence, we have the following main result of this section. 
\begin{corollary} \label{red-to-CI}
    We have $H_c^*(\tSigma_w, \ov\BQ_\ell)[\tphi] = 0$ unless $\tth_w(\Phi_{G^i}) = \Phi_{G^i}$ for $0 \le i \le d$, in which case $H_c^*(\tSigma_w, \ov\BQ_\ell)[\tphi] \cong H_c^*(\tSigma_w^0, \ov\BQ_\ell)[\tphi]$.
\end{corollary}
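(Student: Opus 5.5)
The plan is to derive Corollary \ref{red-to-CI} from Proposition \ref{cut} by a descending induction on $i$, starting from $i = d$. I first need to identify $\tSigma_w$ with $\tSigma_w^d$. Since $G^d = G$, every $f \in \tPhi_r^{>0}$ lies in $\tPhi_{G^d}$. Hence $\tPhi_{w,d} = A_w \sm C_w$, so the defining condition of $\tSigma_w^d$ imposes no restriction on the coordinates $v_f$. Therefore $\tSigma_w^d = \tSigma_w$, and this identification is compatible with the $\tT_r^F$-action, so it also holds on $\tphi$-isotypic parts.

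Next I apply Proposition \ref{cut} successively for $i = d, d-1, \dots, 1$. Suppose first that $\tth_w(\Phi_{G^{i-1}}) = \Phi_{G^{i-1}}$ for every $1 \le i \le d$, that is, $\tth_w(\Phi_{G^j}) = \Phi_{G^j}$ for $0 \le j \le d-1$. Then each step gives an isomorphism
\[H_c^*(\tSigma_w^i, \ov\BQ_\ell)[\tphi] \cong H_c^*(\tSigma_w^{i-1}, \ov\BQ_\ell)[\tphi].\]
Composing these yields $H_c^*(\tSigma_w, \ov\BQ_\ell)[\tphi] \cong H_c^*(\tSigma_w^0, \ov\BQ_\ell)[\tphi]$. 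The remaining index $j = d$ needs no argument: $\Phi_{G^d} = \Phi$ is automatically $\tth_w$-stable, because $\tth_w$ is an automorphism of $\tG_r$ preserving $\tT_r$. So the stated hypothesis for $0 \le i \le d$ coincides with the hypothesis used in the chain.

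Now suppose the hypothesis fails. Let $j$ be the largest index with $0 \le j \le d-1$ such that $\tth_w(\Phi_{G^j}) \ne \Phi_{G^j}$, and set $i = j+1$. For all $i' > i$ the stability condition $\tth_w(\Phi_{G^{i'-1}}) = \Phi_{G^{i'-1}}$ holds, so the chain of isomorphisms from $d$ down to $i$ is valid and gives $H_c^*(\tSigma_w, \ov\BQ_\ell)[\tphi] \cong H_c^*(\tSigma_w^{i}, \ov\BQ_\ell)[\tphi]$. At index $i$ the condition fails, so Proposition \ref{cut} says the right-hand side vanishes, and hence $H_c^*(\tSigma_w, \ov\BQ_\ell)[\tphi] = 0$.

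The main obstacle is not in this deduction, which is formal. All of the substance sits in Proposition \ref{cut}, which is taken as given here. The only points that need checking are the identification $\tSigma_w^d = \tSigma_w$ and the equivariance of the identifications, and both follow directly from the definitions.
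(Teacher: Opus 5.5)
Your proposal is correct and matches the paper's approach. The paper states the corollary as an immediate consequence of Proposition~\ref{cut} without writing out a proof. Your descending iteration from $\tSigma_w^d = \tSigma_w$ down to $\tSigma_w^0$, stopping at the largest index $i$ where $\tth_w(\Phi_{G^{i-1}}) \neq \Phi_{G^{i-1}}$, is exactly the intended deduction.
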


Let $E_{w, i} = \tPhi_{w, i} \sm \tPhi_{w, i-1}$, which by definition consists of affine roots $f \in (\tPhi_{G^i} \cap A_w) \sm (\tPhi_{G^{i-1}} \cup C_w)$ such that either $n_f < s_{i-1} = r_{i-1}/2$ or $n_f = s_{i-1}$ and $\a_f < 0$. Then \[\tSigma_w^i \sm \tSigma_w^{i-1} = \bigsqcup_{f \in E_{w, i}} \tSigma_w^{i, f},\] where $\tSigma_w^{i, f}$ consists of $(x, v\psi) \in \tSigma_w^i$ such that $v_f \neq 0$ and $v_{f'} = 0$ for all $f' < f \in E_{w, i}$. 

\begin{lemma} \label{sum}
    Let $f, f_1, f_2 \in \tPhi_{G^i}$ such that $f \ge f_1 + f_2$ and $f_1, f_2 \in \tPhi_{\L, r}$. Then $f \in \tPhi_{\L, r}$ or $n_f > r$. 
\end{lemma}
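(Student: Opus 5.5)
We first identify $\tPhi_{\L, r}$ explicitly. By definition $\CI_{\L, r}^+ = (\CK_{\L, r} \cap U_r) T_{0+:r} (\CK_{\L, r}^+ \cap \ov U_r)$. Hence an affine root $g$ with $n_g \le r$ and $\a_g \ne 0$, and with $\a_g \in \Phi_{G^j} \sm \Phi_{G^{j-1}}$ for $0 \le j \le d$ (where $\Phi_{G^{-1}} = \emptyset$), lies in $\tPhi_{\L, r}$ exactly when one of the following holds:
\begin{itemize}
\item $\a_g > 0$ and $n_g \ge s_{j-1}$, with the convention $s_{-1} = 0$;
\item $\a_g < 0$ and $n_g > s_{j-1}$, where for $j=0$ this reads $n_g \ge 1$.
\end{itemize}
Every $g$ with $\a_g = 0$ and $1 \le n_g \le r$ lies in $\tPhi_{\L, r}$. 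So membership depends only on two things: the depth level $j(g)$ of $\a_g$ in the filtration $\Phi_{G^0} \subsetneq \cdots \subsetneq \Phi_{G^d}$, and the comparison of $n_g$ with the threshold $s_{j(g)-1}$, strict or non-strict according to the sign of $\a_g$.

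Now take $f \ge f_1 + f_2$ with $n_f \le r$ and $f_1, f_2 \in \tPhi_{\L, r}$. We must show $f \in \tPhi_{\L, r}$. Set $j = j(f)$, with $j = -1$ if $\a_f = 0$. The case $\a_f = 0$ is immediate, since $n_f \ge n_{f_1} + n_{f_2} \ge 1$. Otherwise we use the order relation $f \ge f_1 + f_2$. Either $n_f > n_{f_1} + n_{f_2}$, or $n_f = n_{f_1} + n_{f_2}$ and $\a_f - \a_{f_1} - \a_{f_2}$ is a nonnegative combination of roots in $\Phi_B$.

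The key observation is that the Levi subgroups $G^k$ are standard with respect to $B$. Hence $\Phi_{G^k}$ is spanned by a subset of simple roots, and the level of a root is governed by its support on the simple roots. This gives two facts, from which we need that at least one of $f_1, f_2$ has level $\ge j$.
\begin{itemize}
\item If $\a_f = \a_{f_1} + \a_{f_2}$, then the level of $\a_f$ is at most $\max(j(f_1), j(f_2))$, and in fact equals it unless the two roots cancel outside $\Phi_{G^{j-1}}$.
\item In the inequality case, $\a_f - \a_{f_1} - \a_{f_2} \ge 0$ forces $\a_f$ to be positive outside the span of $\Phi_{G^{j-1}}$ whenever some $f_i$ has larger level.
\end{itemize}
In outline, we proceed as follows. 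If some $f_i$ has level $k \ge j$, then $n_{f_i} \ge s_{k-1} \ge s_{j-1}$, and $n_{f_{3-i}} \ge 0$. We then compare $n_f \ge n_{f_1} + n_{f_2}$ with the threshold $s_{j-1}$. Strictness is obtained in the cases where $\a_f < 0$ as follows:
\begin{itemize}
\item If either summand has positive depth, then $n_f > s_{j-1}$ holds, or we get $n_f \ge 2 s_{j-1}$, which exceeds $s_{j-1}$ when $s_{j-1} > 0$.
\item If instead $n_f = n_{f_i} = s_{j-1}$ with the other summand of depth $0$, then the order condition forces the sign constraints. Negative summands in $\tPhi_{\L,r}$ of depth equal to their threshold do not exist, so $\a_{f_i} > 0$ and $\a_{f_{3-i}} > 0$ at level $0$. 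The relation $\a_f \ge \a_{f_1} + \a_{f_2}$ modulo $\Phi_B$-combinations then gives $\a_f > 0$, consistent with membership.
\end{itemize}
If both $f_i$ have level $< j$, then $\a_{f_1} + \a_{f_2}$ lies in the span of $\Phi_{G^{j-1}}$. Equality of depths would make $\a_f - (\a_{f_1}+\a_{f_2})$ a nonnegative combination with nonzero part outside $\Phi_{G^{j-1}}$, so $\a_f > 0$ at level $j$. It remains to check $n_f \ge s_{j-1}$. This is where genericity-type depth bounds enter: $n_{f_i} \ge$ something. Honestly, this may require $n_f \ge$ threshold information not derivable from the $f_i$ alone, and the statement's alternative $n_f > r$ suggests this is handled by the fact that $f \in \tPhi_{G^i}$ with $i$ fixed.

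The main obstacle is this last case analysis, namely controlling the level of $\a_f$ from the order relation. I would first try to reduce it to the inequality $s_{j-1} \le s_{k-1} + s_{l-1}$, together with the observation that positive-root-combination differences can only raise the level if they are positive. I would check the borderline equality cases $n_f = s_{j-1}$ by hand.
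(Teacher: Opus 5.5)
The paper states Lemma~\ref{sum} without proof, so there is nothing to compare your route against. Judged on its own, your proposal has a genuine gap, and it is the case you flagged yourself: $f_1$ and $f_2$ both have level strictly below the level $j$ of $\a_f$ (toral $f_k$ included). This is not a missing idea. The statement is false in that case, and the alternative $n_f>r$ does not rescue it.

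Since $\ge$ compares depths first, a sum $f_1+f_2$ of small depth lies below every affine root of larger depth, whatever its level. Here is a concrete counterexample.
\begin{itemize}
\item Let $G=\GL_2$, $T$ the diagonal torus, $G^0=T\subsetneq G^1=G$, and $r_0=r_1=r=5$, so $s_0=5/2$.
\item The toral affine roots $f_1=f_2=1$ lie in $\tPhi_{\L,r}$, because $G_r^1=T_{0+:r}\subseteq\CI_{\L,r}^+$.
\item For the root $\a\in\Phi_B$, the affine root $f=\a+2$ satisfies $f\ge f_1+f_2$: the depths are equal and $\a_f-0=\a\in\Phi_B$.
\item But $n_f=2<s_0$, so $f\notin\tPhi_{\L,r}$, although $n_f\le r$.
\end{itemize}
Excluding toral roots does not help. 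Suppose $L\neq T$, $\b\in\Phi_L\cap\Phi_B$ and $s_0>1$. Then $f_1=f_2=\b+0$ together with $f=\a+1$, for any $\a\in\Phi_{G^1}\sm\Phi_{G^0}$ of either sign, is another counterexample.

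What is true is the lemma under the extra hypothesis that $f_1,f_2\in\tPhi_{G^i}\sm\tPhi_{G^{i-1}}$ with $1\le i\le d$. This is all its only application, the proof of Lemma~\ref{conjugation}, needs: there the summands are $f_1,f_2$ or $f_1,g'$, all of level $i$. Your first case (some $f_k$ of level $\ge j$) is essentially correct and already proves this, even when only one summand has level $i$.
\begin{itemize}
\item We have $n_f\ge n_{f_1}+n_{f_2}\ge n_{f_k}\ge s_{i-1}\ge s_{j-1}$.
\item Suppose $\a_f<0$ and $n_f=s_{j-1}$. Then all these inequalities are equalities. Hence $\a_{f_k}>0$, and the other summand has depth $0$, so its root part lies in $\Phi_B$.
\item Since $n_f=n_{f_1}+n_{f_2}$, the difference $\a_f-\a_{f_1}-\a_{f_2}$ is a sum of roots in $\Phi_B$. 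This makes $\a_f$ a nonzero nonnegative combination of roots in $\Phi_B$, contradicting $\a_f<0$.
\end{itemize}
When both summands have level $i$, it is a one-line count: $n_f\ge 2s_{i-1}=r_{i-1}>s_{i-1}\ge s_{j-1}$, using $r_{i-1}\ge r_0>0$. In that case the same equality analysis also gives $n_f>r_{i-1}$ when $\a_f=0$. This is what Lemma~\ref{conjugation} actually uses for toral $h$, since a toral affine root belongs to $\tPhi_{\L,r}$ automatically.

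So restate the lemma with this hypothesis, and drop your second case rather than try to prove it.
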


For $\psi \in \tCO_w$ we set $\tth_\psi = \tth_x$ for some/any $x \in \tG_r$ such that $\psi = x \tG_r^\th$.  
\begin{lemma} \label{conjugation}
    Let $f \in E_{w, i}$ such that $f \neq \tth_w(f) \in \tPhi_{G^i} \sm \tPhi_{G^{i-1}}$ and let $v \in u_w(\BA_w) \cap \tCI_{\L, r}^i$ such that $v_f \neq 0$ and $v_{f'} = 0$ for all $f' < f \in E_{w, i}$. Consider the map \begin{align*} \xi_\psi: \BA^1 &\to G_r^{\tth_\psi} \\ z &\mapsto u_{r_{i-1}-f}(z) {}^{\tth_\psi}u_{r_{i-1}-f}(z) [{}^{\tth_\psi}u_{r_{i-1}-f}(z)\i, u_{r_{i-1}-f}(z)\i]^{\frac{1}{2}}.\end{align*} Then we have \[v \xi_\psi(z) v\i \in \CI_{\L, r}^\dag \a_f^\vee(1 + z v_f \varpi^{r_{i-1}}).\]
\end{lemma}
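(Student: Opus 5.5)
We will prove the statement by a commutator computation in $\tG_r$, modulo the subgroup $\CI_{\L, r}^\dag$, relying on the filtration properties of $\CI_{\L,r}^+$ given by Lemma \ref{sum}. Set $g = r_{i-1} - f$, meaning the affine root with $\a_g = -\a_f$ and $n_g = r_{i-1} - n_f$, and put $u = u_g(z)$. We first record the level bounds. Since $f \in E_{w,i}$, we have $n_f \le s_{i-1}$, so $n_g \ge s_{i-1}$. Since $\a_g \in \Phi_{G^i}\sm\Phi_{G^{i-1}}$ and $\tth_\psi = \tth_w$ up to conjugation by $\tT_0$, the factors $u$, ${}^{\tth_\psi}u$ and the square-root commutator all lie in $G^i_{s_{i-1}:r}$. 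Together with the $\tPhi_{G^i}$-parts of level $\ge s_{i-1}$ (strict inequality when the root is positive), this places them in $\CK_{\L,r}$. Moreover, $\xi_\psi(z)$ is $\tth_\psi$-fixed by the construction used for $u_f^w$ in \S\ref{subsec:para}.

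Next we write $v = v_{<}\, u_f(v_f)\, v_{>}$ according to the linear order on $A_w\sm C_w$. The hypothesis $v_{f'} = 0$ for $f' < f$ in $E_{w,i}$ means that every factor of $v$ other than $u_f(v_f)$ is either in $\tPhi_{\L,r}$ (so lies in $\CI_{\L,r}^+$) or corresponds to an affine root $f' > f$. We conjugate $\xi_\psi(z)$ by each factor in turn and use the Chevalley commutator formula. Conjugating $u_g(z)$ by $u_f(v_f)$ produces, at level $r_{i-1}$, the torus term $\a_f^\vee(1 + z v_f\varpi^{r_{i-1}})$, which is the rank-one $\mathrm{SL}_2$ computation $u_\a(a)u_{-\a}(b)u_\a(-a) \equiv \a^\vee(1+ab)\cdot(\text{unipotent})$. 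It also produces terms $u_{f+g}$-type roots $f' + g$ of level $> r_{i-1}$ or lying in $\tPhi_{\L,r}$. By Lemma \ref{sum}, every such term with $f'\ge f$ and all other correction commutators lie in $\CI_{\L,r}^+$ or vanish because $n > r$.

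The other factor ${}^{\tth_\psi}u$ is attached to the root $\tth_w(g)$. Here we use $f \neq \tth_w(f)$, and also the fact that $\tth_w(f) > f$ because $f\in A_w$. Together these show that pairing $\tth_w(g)$ with $f$ does not produce a level-$r_{i-1}$ coroot term, and that pairing it with factors $f' \neq f$ gives only terms in $\CI^+_{\L,r}$. The commutator square-root factor has level $\ge 2 n_g \ge r_{i-1}$. Its pairing with $v$ has level $> r_{i-1}$ unless $n_f = 0$, and this case is excluded because $n_f=s_{i-1}$ forces $\a_f<0$.

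Finally we collect the result modulo $\CI_{\L,r}^\dag$. The torus factors that appear, other than $\a_f^\vee(1+zv_f\varpi^{r_{i-1}})$, lie in $\CT_{\L}$ or at level $>r_{i-1}$ on $T^i_\der$, hence in $\CI^\dag_{\L,r}$. The unipotent factors lie in $\CK_{\L,r}\cap U_r$ or in $\CK^+_{\L,r}\cap\ov U_r$. Since $\a_f^\vee(1+\cdot\,\varpi^{r_{i-1}})$ normalizes $\CI_{\L,r}^\dag$, we can move it to the right, which gives the required form $\CI_{\L,r}^\dag\,\a_f^\vee(1+zv_f\varpi^{r_{i-1}})$.

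The main obstacle is the bookkeeping that shows every cross term lands in $\CI^\dag_{\L,r}$ rather than only in $\CI^+_{\L,r}$. The concern is that some torus contribution at level exactly $r_{i-1}$ in directions outside $T^i_\der$ might survive. For this we will use the genericity structure through $\CT_\L$ and the fact that coroots of roots in $\Phi_{G^i}$ lie in $T^i_\der$.
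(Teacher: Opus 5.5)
\textbf{Verdict: there is a genuine gap.} Your plan is the same as the paper's: expand $v\xi_\psi(z)v^{-1}$ by the commutator relations and isolate the rank-one $\mathrm{SL}_2$ term. But the decisive verification is missing, and where you invoke the key inequality you have it backwards.

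\textbf{The inequality is reversed.} By definition $A_w=\{f;\ \tth_w(f)\le f\}$. So $f\in E_{w,i}\subseteq A_w$ together with $f\neq\tth_w(f)$ gives $\tth_w(f)<f$, not $\tth_w(f)>f$. The paper uses this twice.

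\emph{First use: showing $\xi_\psi(z)\in\CI_{\L,r}^\dag$.} You only place its factors in $\CK_{\L,r}$, which is not enough. Put $f_1=r_{i-1}-f$ and $f_2=r_{i-1}-\tth_w(f)$; $f_2$ is the affine root carrying ${}^{\tth_\psi}u$.
\begin{itemize}
\item In the boundary case $n_f=s_{i-1}$ one has $\a_f<0$, and $\tth_w(f)<f$ at equal level forces $\a_{\tth_w(f)}<0$.
\item Hence $f_2$ is a positive root at level $s_{i-1}$, so it lies in $\CK_{\L,r}\cap U_r$.
\item It is the \emph{negative} roots of $\Phi_{G^i}\sm\Phi_{G^{i-1}}$ that need level $>s_{i-1}$, not the positive ones as you wrote.
\item Under your inequality, $f_2$ could be a negative root at level exactly $s_{i-1}$, and then ${}^{\tth_\psi}u\notin\CI^\dag_{\L,r}$.
\end{itemize}

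\emph{Second use: ruling out a second surviving coroot term.} A toral term of level $r_{i-1}$ built from $f_2$ and a factor $g'$ of $v$ forces $g'=\tth_w(f)$. This cannot occur, because $\tth_w(f)\notin A_w$ and every factor of $v$ in $E_{w,i}$ satisfies $g'\ge f>\tth_w(f)$. This is where the hypothesis $v_{f'}=0$ for $f'<f$ actually enters. You only conclude that such cross terms lie in $\CI^+_{\L,r}$, which is too weak: $\a^\vee_{\tth_w(f)}(1+c\varpi^{r_{i-1}})$ lies in $\CI^+_{\L,r}$ but not in $\CI^\dag_{\L,r}$.

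\textbf{Your plan for the ``main obstacle'' cannot work.} At level $r_{i-1}$, the group $\CT_\L$ contains only the $T^{i-1}_\der$-part. So a coroot term $\b^\vee(1+c\varpi^{r_{i-1}})$ with $\b\in\Phi_{G^i}\sm\Phi_{G^{i-1}}$ is never in $\CI^\dag_{\L,r}$, even though $\b^\vee$ lies in $T^i_\der$. That is exactly why $\a_f^\vee(1+zv_f\varpi^{r_{i-1}})$ survives. Genericity is a property of $\phi$ and plays no role in this purely group-theoretic statement.

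What is needed is the paper's combinatorial exclusion. Each correction term lies in some $G_r^h$ with
\[
h=c_1f_1+c_2f_2+\sum_g c_g\, g,\qquad c_1+c_2\ge 1,
\]
where $g$ runs over the factors of $v$ (those in $E_{w,i}$ are $\ge f$, the rest lie in $\tPhi_{w,i-1}$). The cases are:
\begin{enumerate}
\item If some $g$ with $c_g\ge1$ comes from $G^{i_0}$ with $i_0>i$, a level count puts $h$ in $\tPhi_{\L,r}$, or makes it toral of level $>r_{i_0-1}$.
\item If $h$ is a non-toral root of $G^i$, then $h\ge f_1$ or $h\ge f_2$, which puts it in $\tPhi_{\L,r}$.
\item If $h$ is toral and $x_h\notin\CI^\dag_{\L,r}$, Lemma \ref{sum} forces $c_1+c_2=1$ and some $g'\in E_{w,i}$ (so $g'\ge f$) with $c_{g'}\ge1$. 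Hence $h$ has level exactly $r_{i-1}$. Then $c_2=1$ gives $\tth_w(f)\ge g'\ge f$, a contradiction, while $c_1=1$ gives $g'=f$, i.e.\ the isolated term.
\end{enumerate}

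\textbf{Your square-root remark also needs redoing in these terms.} The case $n_f=0$ is not excluded. The only level-$r_{i-1}$ contribution from the square-root factor comes from pairing with level-$0$ factors in $\tPhi_{G^{i-1}}$ when $n_f=s_{i-1}$. Those give toral terms in $T^{i-1}_\der$, which do lie in $\CT_\L$, but this has to be checked rather than asserted.
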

\begin{proof}
    Let $f_1 = r_{i-1} - f,  f_2 = r_{i-1} - \tth_w(f) \in \Phi_{G^i} \sm \Phi_{G^{i-1}}$. We claim that 
    
    \[\tag{a} f_1, f_2 \in \tPhi_{\L, r}.\]
    
    Indeed, if $n_f < s_{i-1}$, then $n_{f_i} = r_{i-1} - n_f > s_{i-1}$ and the claim follows. Otherwise, we have $n_f = s_{i-1}$ and $\a_f < 0$ by the definition of $E_{w, i}$. Moreover, as $f \in A_w$, we have $\tth_w(f) < f$ and hence $\a_{\tth_w(f)} < 0$. So the claim also holds.  

    Let $E_{w, i}^f = \{g \in E_{w, i}; g \ge f\}$. By the commutator relation of affine root subgroups we have \[v \xi_\psi(z) v\i = \a_f^\vee(1 + z v_f \varpi^{r_{i-1}}) \d,\] where $\d$ is product of elements $x_h$ in the affine root subgroups $G_r^{h}$ such that \[h = c_1 f_1 + c_2 f_2 + \sum_{g \in E_{w, i}^f \sqcup  \tPhi_{w, i-1}} c_g g\] such that $c_1, c_2, c_f \in \BZ_{\ge 0}$, $c_1 + c_2 \ge 1$ and the case that $c_1 = c_f = 1$, $c_2 = c_g = 0$ for $f \neq g \in E_{w, i}^f \sqcup  \tPhi_{w, i-1}$ is excluded. 
    
    It suffices to show $x_h \in \CI_{\L, r}^\dag$. If $c_g = 0$ for $g \in E_{w, i}^f \sqcup  \tPhi_{w, i-1}$, the statement follows from the claim (a). Otherwise, let $i \le i_0 \le d$ be the maximal integer such that $c_{g_0} \ge 1$ for some $g_0 \in \tPhi_{G^{i_0}} \cap (E_{w, i}^f \sqcup  \tPhi_{w, i-1})$. In particular, $h \in \tPhi_{G^{i_0}}$. First assume that $i < i_0$. By the definition of $\tPhi_{w, i}$, we have $g_0 \in \tPhi_{\L, r}$ and hence $n_{g_0} \ge s_{i_0-1}$. If $h \in \Phi \times \BZ$, then $n_h \ge n_{g_0} + n_{f_1} > s_{i_0-1}$ and hence $h \in \tPhi_{\L, r}$ as desired. If $h \in \BZ_{\ge 1}$, then $x_h \in \b^\vee(1 + \varpi^h \ov\BF_q[[\varpi]])$ for some $\b \in \Phi_{G^{i_0}}$. In this case, there exists $g_0 \neq g_0' \in \tPhi_{G^{i_0}}$ such that $c_{g_0'} \ge 1$. Thus $h \ge n_{g_0} + n_{g_0'} + n_{f_1} > r_{i_0-1}$ and hence $x_h \in \CI_{\L, r}^\dag$ as desired. Now we assume $i_0 = i$ and hence $h \in \tPhi_{G^i}$. If $h \in \Phi \times \BZ$, then we have $n_h \ge n_{f_1} \ge s_{i-1}$ and either $h \ge f_1$ or $h \ge f_2$. This implies that $h \subseteq \tPhi_\L$ as desired. If $h \in \BZ_{\ge 1}$, then $x_h \in \a^\vee(1 + \varpi^h \ov\BF_q[[\varpi]])$ for some $\a \in \Phi_{G^{i_0}}$. We show that $x_h \in \CI_{\L, r}^\dag$ unless $c_1 = c_f = 1$ and $c_2 = c_g = 0$ for $f \neq g \in E_{w, i}^f \sqcup  \tPhi_{w, i-1}$. Indeed, as $c_1 + c_2 \ge 1$ and $f_1, f_2 \in \tPhi_{G^i} \sm \tPhi_{G^{i-1}}$ we have either $c_1 + c_2 \ge 2$ or $c_{g'} \ge 1$ for some $g' \in (E_{w, i}^f \sqcup  \tPhi_{w, i-1}) \cap (\tPhi_{G^i} \sm \tPhi_{G^{i-1}})$. Assume $x_h \notin \CI_{\L, r}^\dag$. Then we have $c_1 + c_2 = 1$ and $g' \in E_{w, i}^f$ by Lemma \ref{sum}. In particular, $h \ge n_{f_1} + n_{g'} \ge n_{f_1} + n_{f} = r_{i-1} - n_f + n_f = r_{i-1}$. As $x_h \notin \CI_{\L, r}^\dag$, it follows that $h = r_{i-1}$.  If $c_2 = 1$, then $h \ge f_2 + g'$ and hence $\tth_w(f) \ge g' \ge f$, contradicting the assumption that $f > \tth_w(f)$. Otherwise, $c_1 = 1$ and $h \ge f_1 + g'$. Thus we have $f \ge g' \ge f$, $g' = f$ and $h = f_1 + g'$, that is, $c_1 = c_f = 1$ and $c_2 = c_g = 0$ for $f \neq g \in E_{w, i}^f \sqcup  \tPhi_{w, i-1}$. The proof is finished.    
\end{proof}

Fix $N \in \BZ_{\ge 1}$ such that $F^N$ acts trivially on $X_*(T)$. For $m \in \BZ_{\ge 1}$ and $\l \in X_*(T)$ consider the following subgroups \begin{align*}
    H_{\l, m} &= \{t \in T(1 + \varpi^m \ov\BF_q[[\varpi]]); L(t) \in \l(1 + \varpi^m \ov\BF_q[[\varpi]])\} \subseteq T_{m:r}; \\ M_{\l, m} &= \{x F(x) \cdots F^{N-1}(x); x \in \l(1 + \varpi^m \ov\BF_{q^N}[[\varpi]])\}  \subseteq H_{\l, m}^\circ \cap T_r^F.
\end{align*} Here $L(t) = t\i F(t)$ for $t \in \tT_r$.

\begin{lemma}
    For $1 \le i \le d$ and $\a \in \Phi_{G^i} \sm \Phi_{G^{i-1}}$ the character $\tphi$ is non-trivial over $M_{\a, r_{i-1}}$.
\end{lemma}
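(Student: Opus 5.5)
Since $\tphi$ is the pull-back of $\phi$ and $M_{\a, r_{i-1}}$ lies in $T_{0+:r}^F$, where $\tT_{0+:r} \cong T_{0+:r}$, it suffices to show that $\phi$ is non-trivial on $M_{\a, r_{i-1}}$. The idea is that $M_{\a, r_{i-1}}$ is the image of the norm map $N_{F^N/F}$ restricted to $\a^\vee(1 + \varpi^{r_{i-1}}\ov\BF_{q^N}[[\varpi]])$. By construction, depth-$r_{i-1}$ information of $\phi$ along $\a^\vee$ is detected by the $(G^{i-1}, G^i)$-genericity of $\phi_{i-1}$ in the Howe factorization.

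First I decompose $\phi = \phi_{-1}\prod_{j=0}^d \phi_j|_{T_r^F}$ and examine each factor on $M_{\a, r_{i-1}}$. The factor $\phi_{-1}$ has depth zero, so it is trivial on $T_{0+:r}^F \supseteq M_{\a, r_{i-1}}$. For $j < i-1$, $\phi_j$ has depth $r_j < r_{i-1}$, so it is trivial on $T_{r_{i-1}:r}^F$. For $j \ge i$, we have $\a \in \Phi_{G^i} \subseteq \Phi_{G^j}$, so $\a^\vee$ lands in $T^j_\der$. Hence every element $x F(x)\cdots F^{N-1}(x)$ with $x \in \a^\vee(\cdot)$ lies in $G^j_\der(\BF_q)$ (being $F$-fixed and in $T^j_\der$), and $\phi_j$ is trivial there by assumption. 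Thus $\phi|_{M_{\a, r_{i-1}}} = \phi_{i-1}|_{M_{\a, r_{i-1}}}$.

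It remains to show $\phi_{i-1}$ is non-trivial on $M_{\a, r_{i-1}}$. Since $\phi_{i-1}$ has depth $r_{i-1}$, its restriction to $T_{r_{i-1}:r_{i-1}}^F \cong \mathrm{Lie}(T)(\BF_q)$, an additive group, is given by an element $X^*$ of the dual Lie algebra, so that $\phi_{i-1}(1+\varpi^{r_{i-1}}Y) = \psi(\langle X^*, Y\rangle)$ for a fixed non-trivial additive character $\psi$ of $\BF_q$. On the image of $M_{\a, r_{i-1}}$ in $T_{r_{i-1}:r_{i-1}}$, which is $\{\sum_{k<N} F^k(c\,H_\a) : c \in \BF_{q^N}\}$ with $H_\a = d\a^\vee(1)$, the character becomes $c \mapsto \psi(\tr_{\BF_{q^N}/\BF_q}(c \langle X^*, H_\a\rangle))$, using $F$-invariance of $X^*$ and the fact that $F^N$ fixes $\a^\vee$. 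Genericity condition (GE1) of Yu gives $\langle X^*, H_\a\rangle \neq 0$ for $\a \in \Phi_{G^i}\sm\Phi_{G^{i-1}}$. Since the trace form is non-degenerate, the map $c \mapsto \psi(\tr(c\,a))$ with $a \neq 0$ is non-trivial, which proves the lemma.

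The main obstacle I expect is the bookkeeping that $M_{\a, r_{i-1}}$ genuinely surjects onto the relevant line at depth exactly $r_{i-1}$ and that the $F$-twisting matches up. This needs a careful identification of $x F(x)\cdots F^{N-1}(x)$ modulo depth $r_{i-1}+$ with an additive norm (trace) sum in $\mathrm{Lie}(T)$. Here I would use that $F^N$ acts trivially on $X_*(T)$ and that $\a^\vee(1+\varpi^{r_{i-1}} c) \equiv 1 + \varpi^{r_{i-1}} c H_\a$. If $r_{i-1}$ is not an integer the statement would need interpretation, but depth values here are integers since $T_r$ is indexed by integer levels.
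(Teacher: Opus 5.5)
Your proof is correct and follows the same route as the paper, whose one-line proof notes $\tphi^+ = \phi^+$ and then cites Kaletha's construction of the Howe factorization. You spell out what that citation hides: on $M_{\a, r_{i-1}} \subseteq T_{0+:r}^F$ only the factor $\phi_{i-1}$ survives, by the depth bounds for $j < i-1$ and triviality on $G^j_\der$ for $j \ge i$. Then (GE1), together with surjectivity of $\tr_{\BF_{q^N}/\BF_q}$, gives the non-triviality.
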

\begin{proof}
    Note that $\tphi^+ = \phi^+$. Then the statement follows from the construction of How factorization of $\phi$ in \cite{Kal}.
\end{proof}

For $f \in E_{w, i}$ we denote by \[p_f: \a_f^\vee(1 + \varpi^{r_{i-1}}\ov\BF_q[[\varpi]]) \to \a_f^\vee(1 + \varpi^{r_{i-1}} \ov\BF_q[[\varpi]]) / \a_f^\vee(1 + \varpi^{r_{i-1}+1} \ov\BF_q[[\varpi]]) \cong \BA^1\] the natural projection. By definition we have $\a_f^\vee(1 + p_f(\t) \varpi^{r_{i-1}}) \t\i \in \a_f^\vee(1 + \varpi^{r_{i-1}+1} \ov\BF_q[[\varpi]]) \subseteq \CI_{\L, r}^\dag$.

\begin{lemma} \label{extension}
    Let $f \in E_{w, i}$ such that $f, \tth_w(f) \in \tPhi_{G^i} \sm \tPhi_{G^{i-1}}$. Then there exists a morphism $\D: H_{\a_f, r_{i-1}} \times \tSigma_{w, f}^i \to \tSigma_{w, f}^i$ such that for each $t \in H_{\a_f, r_{i-1}}$ the restriction map $\D_t = \D(t, -)$ is an automorphism of $\tSigma_w^i$, which coincides with the action of $T_r^F$ on $\tSigma_w^i$ when $t \in H_{\a_f, r_{i-1}}^F$.
\end{lemma}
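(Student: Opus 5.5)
The plan is to build $\D$ by a Lang-type twist of the torus action. Recall that $\tT_r^F$ acts on $\tSigma_w^i$ by $t:(x, v\psi) \mapsto (txt\i, tv\psi)$. For a general $t \in H_{\a_f, r_{i-1}}$ the pair $(txt\i, tv\psi)$ no longer satisfies $F(txt\i \cdot tv\psi) = tv\psi$. Instead one has
\[F(t x t\i)\, F(t v\psi) = F(t)\, F(x v\psi) = t\, L(t)\, v\psi,\]
so the defect is exactly $L(t) \in \a_f^\vee(1 + \varpi^{r_{i-1}}\ov\BF_q[[\varpi]])$. This defect has to be absorbed in two places: in the $\CI_{\L, r}^\dag$-component via the projection $p_f$, and in $\psi$ via a right multiplication by an element of $\tG_r^{\tth_\psi}$. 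Lemma \ref{conjugation} is designed for precisely this, since it produces a $\tth_\psi$-fixed element $\xi_\psi(z)$ whose $v$-conjugate equals $\a_f^\vee(1 + z v_f \varpi^{r_{i-1}})$ modulo $\CI_{\L, r}^\dag$.

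The steps are as follows.

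\begin{enumerate}
\item For $(x, v\psi) \in \tSigma_{w,f}^i$ we have $v_f \neq 0$. Set
\[z = z(t, v) = p_f(L(t)^{\pm 1})/v_f\]
(sign fixed by the computation) and $\psi' = \xi_\psi(z)\psi$, interpreting $\xi_\psi(z)$ as acting on $\psi = y\tG_r^\tth$ by $y \mapsto \xi_\psi(z) y$. Since $\xi_\psi(z) \in \tG_r^{\tth_\psi}$, the point $\psi'$ is another representative description of the same coset, or more precisely $v\xi_\psi(z)\psi = v\psi$ as cosets. So rather than modifying $\psi$, I would use the identity
\[t v\psi = t\, (v \xi_\psi(z) v\i)\, v\psi\]
and define
\[\D_t(x, v\psi) = (x', t v\psi), \qquad x' = F(t v\psi)\,(t v\psi)\i \text{ expressed via } F(t)\,x\,\ldots\]
In practice I would define $x'$ by
\[x' = t\, x\, L(t)\, \bigl(v\xi_\psi(z)v\i\bigr)\i\, t\i,\]
up to the correct ordering, and check that $F(x' \cdot t v\psi) = t v\psi$ and that $x' \in F\CI_{\L, r}^\dag$.

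\item Membership $x' \in F\CI_{\L, r}^\dag$ follows from Lemma \ref{conjugation}. It gives $v\xi_\psi(z)v\i \in \CI_{\L, r}^\dag\, \a_f^\vee(1 + z v_f\varpi^{r_{i-1}})$, and by the choice of $z$ this torus factor cancels $L(t)$ modulo $\a_f^\vee(1 + \varpi^{r_{i-1}+1}\ov\BF_q[[\varpi]]) \subseteq \CI_{\L, r}^\dag$. The step also uses that $T_{0+:r}$ normalizes $\CI_{\L, r}^\dag$, possibly after passing to $\CI_{\L, r}^+$ and using $t \in T_{m:r}$ with $m = r_{i-1} \geq 1$. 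I also need that the coordinates $v_{f'}$ for $f' \le f$ are preserved, so that the image stays in $\tSigma_{w,f}^i$. This holds because left multiplication by $t \in T_{0+:r}$ changes $u_w\i(v)$ only in coordinates strictly above $f$ in the filtration, or not at all, after renormalizing with Proposition \ref{isomorphism}.

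\item For $t \in H_{\a_f, r_{i-1}}^F$ we have $L(t) = 1$, hence $z = 0$, $\xi_\psi(0) = 1$, and $\D_t$ is the original action.

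\item $\D$ is clearly a morphism, since $p_f$, $L$, $\xi_\psi$ and $u_w\i$ are all algebraic. Each $\D_t$ is an automorphism because its inverse is constructed the same way with $t\i$.
\end{enumerate}

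The main obstacle is step 1/2 bookkeeping: making the formula for $x'$ precise, so that it genuinely lies in $F\CI_{\L, r}^\dag$ rather than $\CI_{\L, r}^\dag$, and handling the dependence of $\xi_\psi$ on the representative of $\psi$. For the latter I would first check that $\tth_\psi$ is independent of the representative on $\tCO_w$, which holds since $\tT_0$ acts through $\tth_w$-compatible conjugation. The other delicate point is verifying that $v\xi_\psi(z)$ stays in $u_w(\BA_w)\tCO_w$ with the vanishing pattern preserved, which I would deduce from the commutator analysis already carried out in the proof of Lemma \ref{conjugation}.
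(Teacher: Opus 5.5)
\textbf{There is a genuine gap.} Your strategy is the paper's: twist the $\tT_r^F$-action by the Lang map, and absorb the defect $L(t)$ using the $\tth_\psi$-fixed element $\xi_\psi(z)$ of Lemma \ref{conjugation} with $z = v_f^{-1}p_f(L(t))$. But the step you postpone as bookkeeping is where your version fails.

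\textbf{The second coordinate.} You keep the second coordinate equal to $tv\psi$. In the model $\tSigma_w^i = \{(x, v\psi);\ x \in \CI_{\Lambda, r}^\dag,\ F(xv\psi) = v\psi\}$, using $F(v\psi) = F(x)^{-1}v\psi$, the condition $F(x'\,tv\psi) = tv\psi$ forces
\[F(x') \in t\,(v\tG_r^{\tth_\psi}v^{-1})\,F(x)F(t)^{-1}.\]
So $x'$ can only be obtained by inverting Frobenius, and $\Delta$ is not a morphism in $t$. Algebraicity in $t$ is exactly what Proposition \ref{cut} needs, namely that $H_{\alpha_f, r_{i-1}}^\circ$ acts trivially on cohomology. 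Concretely, your candidate $x' = t x L(t)(v\xi_\psi(z)v^{-1})^{-1}t^{-1}$ gives $x'\,tv\psi = txL(t)v\psi$, and applying $F$ to this does not return $tv\psi$.

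The missing idea is to put the Frobenius on the second coordinate:
\[\Delta_t(x, v\psi) = (x_{t,v\psi}, F(t)v\psi), \qquad x_{t,v\psi} = {}^t\bigl(x\,v\xi_\psi(v_f^{-1}p_f(L(t)))v^{-1}L(t)^{-1}\bigr).\]
Since $L(t)^{-1}t^{-1} = F(t)^{-1}$ and $\xi_\psi(\cdot) \in \tG_r^{\tth_\psi}$, one already has $x_{t,v\psi}F(t)v\psi = txv\psi$ before applying $F$. Hence $F(x_{t,v\psi}F(t)v\psi) = F(t)v\psi$. Lemma \ref{conjugation}, together with $\alpha_f^\vee(1 + p_f(L(t))\varpi^{r_{i-1}})L(t)^{-1} \in \alpha_f^\vee(1 + \varpi^{r_{i-1}+1}\ov\BF_q[[\varpi]])$, then gives $x_{t,v\psi} \in \CI_{\Lambda, r}^\dag$ directly. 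For $t$ $F$-fixed this reduces to $(txt^{-1}, tv\psi)$, as required.

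Your worry about landing in $F\CI_{\Lambda, r}^\dag$ rather than $\CI_{\Lambda, r}^\dag$ comes from mixing two descriptions. In the original description of $\tSigma$ one has $F(\gamma) = x\gamma$ with $x \in F\CI_{\Lambda, r}^\dag$. In the reparametrized description used for $\tSigma_w$, the first coordinate lies in $\CI_{\Lambda, r}^\dag$, and that is precisely what Lemma \ref{conjugation} controls.

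\textbf{The case $f = \tth_w(f)$.} Lemma \ref{conjugation} assumes $f \neq \tth_w(f)$, so your construction says nothing when $f = \tth_w(f)$. The statement allows this case and the paper treats it separately. There $\tth_w(\alpha_f) = \alpha_f$, so $L(t) \in \alpha_f^\vee(1+\varpi^{r_{i-1}}\ov\BF_q[[\varpi]]) \subseteq \tG_r^{\tth_\psi}$. One can then insert $L(t)$ itself, taking $x_{t,v\psi} = {}^t(x[v, L(t)])$ with $[v, L(t)] \in \CI_{\Lambda, r}^\dag$.

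\textbf{Invertibility (minor).} $\Delta_{t^{-1}}$ is not literally the inverse of $\Delta_t$, because $\Delta$ is not an action. Instead, $\Delta_t \circ \Delta_{t^{-1}}$ has the form $(x, v\psi) \mapsto (x\delta_{t,v\psi}, v\psi)$, with $\delta_{t,v\psi}$ independent of $x$. This map is visibly invertible, and a left/right-inverse argument then shows that each $\Delta_t$ is an automorphism.
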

\begin{proof}
    Case (1):  $f \neq \tth_w(f)$. Let $\xi$ be the map  defined in Lemma \ref{conjugation}. We define \[\D: H_{\a_f, r_{i-1}} \times \tSigma_{w, f}^i \to \tSigma_{w, f}^i, \quad (t, x, v\psi) \mapsto (x_{t, v\psi}, F(t) v\psi),\] where \[x_{t, v\psi} = {}^t (x v \xi_\psi(v_f\i p_f(L(t))) v\i L(t)\i).\] By Lemma \ref{conjugation} and the definition of $p_f$, we have \[v \xi_\psi(v_f\i p_f(L(t))) v\i L(t)\i \in \CI_{\L, r}^\dag \a_f^\vee(1 + p_f(L(t)) \varpi^{r_{i-1}}) L(t)\i \subseteq \CI_{\L, r}^\dag.\] Hence $x_{t, v\psi} \in \CI_{\L, r}^\dag$. Moreover, if $t \in H_{\a_f, r_{i-1}}^F$ we have $L(t) = 1$, $x_{t, v\psi}  = t x t\i$ and hence the endomorphism $\D_t$ of $\tSigma_{w, f}^i$ coincides with action of $\tT_r^F$.

    Now we show $\D$ is well-defined, that is, \[\tag{a} F(x_{t, v\psi} F(t) v\psi) = F(t) v\psi.\] By definition, we have $F(x v\psi) = v \psi$. Hence (a) is equivalent to \[\tag{b} x_{t, v\psi} F(t) v\psi = t x v\psi,\] which follows by inserting the expression of $x_{t, v\psi}$ and using that $\Im\xi \subseteq \tG_r^{\tth_\psi}$. 

    Finally we show $\D_t$ is an isomorphism of $\tSigma_{w, f}^i$ for $t \in H_{\a_f, r_{i-1}}$. By definition the composition $\D_{t} \circ \D_{t\i}$ sends $(x, v\psi)$ to $(x \d_{t, v\psi}, v\psi)$ for some element $\d_{t, v\psi} \in \tG_r$. In particular, $F(x \d_{t, v\psi} v\psi) = v\psi = F(x v\psi)$, that is, $v\i \d_{t, v\psi} v \in \tG_r^{\tth_\psi}$. Thus $\D_{t} \circ \D_{t\i}$ has an inverse given by $(x, v\psi) \mapsto (x \d_{t, v\psi}\i, v\psi)$. Hence $\D_t$ (resp. $\D_{t\i}$) has a right (resp. left) inverse. By symmetry, $\D_t$ also has a left inverse and hence is invertible as desired.

    Case (2): $f = \tth_w(f)$. In particular, $L(t) \in \a_f^\vee(1 + \varpi^{r_{i-1}} \ov\BF_q[[\varpi]]) \subseteq \tG_r^{\tth_\psi}$ for any $t \in H_{\a_f, r_{i-1}}$ and $\psi \in \tCO_w$. We define \[\D: H_{\a_f, r_{i-1}} \times \tSigma_{w, f}^i \to \tSigma_{w, f}^i, \quad (t, x, v\psi) \mapsto (x_{t, v\psi}, F(t)v\psi),\] where \[x_{t, v\psi} = {}^t (x [v, L(t)]).\] Using the inclusions $[v, L(t)] \in \CI_{\L, r}^\dag$ and $L(t) \in \tG_r^{\tth_\psi}$, we deduce that $\D$ is well-defined. The remaining statements follow as in Case (1) in a similar way.     
\end{proof}

\begin{proof}[Proof of Proposition \ref{cut}]
    Suppose that there exists $\b \in \Phi_{G^i} \sm \Phi_{G^{i-1}}$ such that $\a := \tth_w(\b) \in \Phi_{G^{i-1}}$. As in Case (2) of Lemma \ref{extension}, there is a morphism \[\D: H_{\a^\vee +\b^\vee, r_{i-1}} \times \tSigma_w^i \to \tSigma_w^i, \quad (t, x, v\psi) \mapsto (x_{t, v\psi}, F(t) v\psi),\] where \[x_{t, v\psi} = {}^t (x [v, L(t)]).\] In particular, for each $t \in M_{\a^\vee +\b^\vee, r_{i-1}} \subseteq H_{\a^\vee +\b^\vee, r_{i-1}}^\circ$ the induced endomorphism on the cohomology groups $H_c^i(\tSigma_w^i, \ov\BQ_\ell)$ is trivial. As $\a \in \Phi_{G^{i-1}}$, $\tphi$ is trivial over $M_{\a, r_{i-1}}$ and hence $\tphi(M_{\a^\vee +\b^\vee, r_{i-1}}) = \tphi(M_{\b^\vee, r_{i-1}}) \neq \{1\}$. Thus we have $H_c^*(\tSigma_w^i, \ov\BQ_\ell)[\tphi] =0$ as desired.

    Now we assume that $\tth_w(\Phi_{G^{i-1}}) = \Phi_{G^{i-1}}$. As $\tSigma_w^i \sm \tSigma_w^{i-1} = \bigsqcup_{f \in E_{w, i}} \tSigma_{w, f}^i$  It suffices to show $H_c^*(\tSigma_{w, f}^i, \ov\BQ_\ell)[\tphi] = 0$ for all $f \in E_{w, i}$. Let $\D: H_{\a_f, r_{i-1}} \times \tSigma_{w, f}^i \to \tSigma_{w, f}^i$ be the morphism defined in Lemma \ref{extension}. Then for each $t \in H_{\a_f, r_{i-1}}^\circ$ the induced endomorphism on the cohomology groups $H_c^i(\tSigma_{w, f}^i, \ov\BQ_\ell)$ is trivial. As $\tphi$ is nontrivial on $M_{\a_f, r_{i-1}} \subseteq H_{\a_f, r_{i-1}}^\circ$, we have $H_c^*(\tSigma_{w, f}^i, \ov\BQ_\ell)[\tphi] = 0$ as desired.
\end{proof}

\section{Reduction to the study of $\Sigma_w^\tBK$} \label{sec:first-reduction}
Thanks to Corollary \ref{red-to-CI}, it remains to compute the cohomology of $\tSigma_w^0$. To this end, we reduce it to the computation for a much simpler variety. We fix $w \in \Theta_{T_r}$ such that $\tth_w(\Phi_{G^i}) = \Phi_{G^i}$ for $0 \le i \le d$. 

\subsection{The variety $\tSigma_w'$}
Consider the variety \[\tSigma_w' = \{(x, y, \t, \psi) \in \CI_{\L, r}^\dag \times \CI_{\L, r}^\dag \times T_{0+:r} \times \tCO_w; F(x \t \psi) = y \t \psi\},\] on which $\tT_r^F = \tT_0^F \times T_{0+:r}^F$ acts by $t = t_s t_u: (x, y, \t, \psi) = (t x t\i, t y t\i, t_u \t, t_s \psi)$.

Recall that $\CI_{\L, r}^+ = \CI_{\L, r}^\dag T_{0+:r}$ and $\tPhi_{\L, r} = \{f \in \tPhi_r^{>0}; G_r^f \subseteq \CI_{\L, r}^+\}$. 
\begin{lemma} \label{admissible}
    We have that $(\CI_{\L, r}^+)^{\tth_\psi}$ are affine spaces of the the same dimension for $\psi \in \tCO_w$.
\end{lemma}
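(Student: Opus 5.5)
The plan is to reduce to the setting of Lemma \ref{parameter}, applied with $E = \tPhi_{\L, r}$, after a conjugation that makes the involution independent of $\psi$ up to the torus.

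\emph{Step 1: reduce to a single involution.} Let $\psi \in \tCO_w$. Then $\psi = s w \tG_r^\tth$ for some $s \in \tT_0$, so $\tth_\psi = \tth_{sw} = \Ad(s)\,\tth_w\,\Ad(s)\i$. Since $\CI_{\L, r}^+$ is built from $T_{0+:r}$ and affine root subgroups, it is normalized by $\tT_0$. Hence $\Ad(s)$ is a variety isomorphism
\[(\CI_{\L, r}^+)^{\tth_w} \isoarrow (\CI_{\L, r}^+)^{\tth_\psi}.\]
It therefore suffices to show that $(\CI_{\L, r}^+)^{\tth_w}$ is an affine space. This single space then has the same dimension for every $\psi$.

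\emph{Step 2: write $\CI_{\L, r}^+$ as $G_r^E$ and check admissibility.} Set $E = \tPhi_{\L, r}$. We have $\CI_{\L, r}^+ = T_{0+:r}\CI_{\L, r}^\dag$, and $\CI_{\L, r}^\dag$ is a product of affine root subgroups with $\CT_{\L, r}$. Writing $T_{0+:r}$ as the product of its layers $\{1,\dots,r\}$, we get $\CI_{\L, r}^+ = \prod_{f \in E} G_r^f = G_r^E$ with $\{1,\dots,r\}\subseteq E$.

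The closure condition under sums comes from $\CI_{\L, r}^+$ being a group, together with the commutator relations; Lemma \ref{sum} is exactly the needed statement.

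The $\tth_w$-stability condition uses the hypothesis $\tth_w(\Phi_{G^i}) = \Phi_{G^i}$ for all $i$. Under this hypothesis, $\tth_w$ preserves each $\CK$-type piece $G^i_{s_{i-1}:r}$ and $G^i_{s_{i-1}+:r}$, and preserves $T_{0+:r}$. Consequently $\tth_w(\CI_{\L, r}^+)$ is again a "Howe-type" subgroup with the same depths. If $f \in E$ and $\tth_w(f) > f$ is positive, then $\tth_w(f)$ either has larger depth, or equal depth with a more positive root. In both cases $\tth_w(f) \in E$, because membership in $E$ depends only on the level of $\a_f$ in the filtration $G^0 \subseteq \dots \subseteq G^d$, on $n_f$, and on the sign of $\a_f$ when $n_f = s_{i-1}$.

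\emph{Step 3: conclude.} Lemma \ref{parameter} now gives $(G_r^E)^{\tth_w} \cong \BA_E^w$, which is an affine space of dimension $\sum_{f \in B_{E,w}\sm D_{E,w}} \dim \BA_f^w$.

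\emph{Main obstacle.} The delicate point is the admissibility check in Step 2, specifically the boundary affine roots with $n_f = s_{i-1}$. There the membership of $f$ in $E$ depends on the sign of $\a_f$, and $\tth_w$ may send a positive root to a negative one. If $E$ fails to be literally $\tth_w$-admissible, I would fall back on the proof of Lemma \ref{parameter} directly. That argument is an induction on depth using the layers $G^E_{r:r}$; it only needs $G_r^E$ to be a $\tth_w$-stable group filtered by linear layers, and it still produces an affine space, since each layer contributes the fixed points of a linear involution.
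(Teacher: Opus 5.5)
Your proposal is correct and is essentially the paper's proof: the paper likewise deduces $\tth_w$-admissibility of $\tPhi_{\L,r}$ from $\tth_w(\Phi_{G^i})=\Phi_{G^i}$, notes $\CI_{\L,r}^+=G_r^{\tPhi_{\L,r}}$, and applies Lemma~\ref{parameter}; your conjugation by $s\in\tT_0$ only makes explicit the ``same dimension'' claim that the paper leaves implicit. The boundary case you worry about is harmless, because admissibility only involves $f<\tth_w(f)$ at equal depth, and a root lying above a positive root in this order is itself positive, so no fallback is needed (and the one you sketch would not apply anyway, since $\CI_{\L,r}^+$ is generally not $\tth_w$-stable).
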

\begin{proof}
    By assumption, $\tth_w(\Phi_{G^i}) = \Phi_{G^i}$ for $0 \le i \le d$, which implies that $\tPhi_{\L, r}$ is $\tth_w$-admissible. Then the  statement follows from Lemma \ref{parameter} by noticing that $\CI_{\L, r}^+ = G_r^{\tPhi_{\L, r}}$.
\end{proof}

\begin{lemma} \label{red-to-CK}
    Let $\tSigma_w^0$ be as in Corollary \ref{red-to-CI}. The map $q: \tSigma_w' \to \tSigma_w^0$ given by $(x, y, \t, \psi) \mapsto (xy\i, y\t\psi)$ is an $\tT_r^F$-equivariant affine space fibration. 
\end{lemma}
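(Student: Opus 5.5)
We need to check that $q$ is well defined and equivariant, that it is surjective, and that its fibres are affine spaces of constant dimension, varying algebraically.

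\emph{Well-definedness and equivariance.} Let $(x, y, \t, \psi) \in \tSigma_w'$. Then $xy\i \in \CI_{\L, r}^\dag$, and
\[F(xy\i \cdot y\t\psi) = F(x\t\psi) = y\t\psi.\]
Recall that $\tSigma_w^0$ is the locus in $\tSigma_w$ where $v_f = 0$ for $f \notin \tPhi_{w,0} = (A_w \sm C_w) \cap (\tPhi_{G^0} \cup \tPhi_{\L, r})$. So it remains to see that $y\t\psi \in u_w(\BA_{\tPhi_{w,0}})\tCO_w$.

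Write $\psi = s w\tG_r^\tth$ with $s \in \tT_0$. Since $\tth_w(\Phi_{G^i}) = \Phi_{G^i}$ for all $i$, the set $\tPhi_{\L, r}$ is $\tth_w$-admissible (Lemma \ref{admissible}). Hence Lemma \ref{parameter} gives
\[\CI_{\L, r}^+ = u_{\tPhi_{\L, r}, w}\bigl(\BA_{\tPhi_{\L, r}, w}\bigr)\,(\CI_{\L, r}^+)^{\tth_w}.\]
Conjugating by $s$, with $\tT_0$ normalizing $\CI_{\L,r}^+$ and each root subgroup, yields the same statement with $\tth_w$ replaced by $\tth_\psi$. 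Moreover $\CI_{\L, r}^\dag T_{0+:r} = \CI_{\L,r}^+$, so $y\t \in \CI_{\L,r}^+$, and we obtain $y\t\psi = v\psi$ with $v$ having coordinates only in $\tPhi_{w,0}$. Here I take for granted that $\tPhi_{\L,r} \cap (A_w\sm C_w)$ together with the $G^0$-part is exactly $\tPhi_{w,0}$. The $\tT_r^F$-equivariance is immediate from the formulas for the two actions.

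\emph{Description of the fibres.} Fix $(z, v\psi) \in \tSigma_w^0$. A point of the fibre is a triple $(x, y, \t)$ with
\[xy\i = z, \qquad y\t\psi = v\psi.\]
The second condition says
\[y\t \in v\,\bigl(\CI_{\L,r}^+\bigr)^{\tth_\psi}.\]
Given $y\t$, the factorization $\CI_{\L,r}^+ = \CI_{\L,r}^\dag \times T_{0+:r}$ determines $y$ and $\t$ uniquely; this uses the product decomposition $\CI_{\L,r}^\dag T_{0+:r}$ with trivial intersection, coming from the factorization of $\CI_{\L, r}^\dag$ in \S\ref{subec:Howe}. Then $x = zy$ is forced. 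The Frobenius condition
\[F(x\t\psi) = F(z v\psi) = v\psi = y\t\psi\]
then holds automatically.

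Hence the fibre is isomorphic to $v(\CI_{\L,r}^+)^{\tth_\psi} \cong (\CI_{\L,r}^+)^{\tth_\psi}$. By Lemma \ref{admissible} this is an affine space of dimension independent of $\psi$.

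\emph{Local triviality (main obstacle).} We must show $q$ is an affine space fibration, not merely that its fibres are affine spaces. The plan is to trivialize explicitly. Using the parametrization $\psi = sw\tG_r^\tth$, the isomorphism
\[\BA_{\tPhi_{\L,r}}^w \cong (\CI_{\L,r}^+)^{\tth_w}\]
of Lemma \ref{parameter} transports via $\Ad(s)$ to $(\CI_{\L,r}^+)^{\tth_\psi}$. This gives a map
\[\tSigma_w^0 \times \BA_{\tPhi_{\L,r}}^w \to \tSigma_w',\]
defined via $s$, which is well defined only up to $s \mapsto s t$ with $t \in \tT_0^{\tth_w}$. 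This ambiguity is a finite group when passing to $\tCO_w$, up to the connected part acting compatibly.

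I expect the ambiguity to be handled either étale-locally on $\tCO_w \cong \tT_0/\tT_0^{\tth_w}$, which suffices for cohomology, or by noting that $\Ad(t)$ for $t\in\tT_0^{\tth_w}$ acts on $\BA^w_{\tPhi_{\L,r}}$ by linear maps. Either way the fibration is Zariski/étale locally trivial with affine space fibres. That is enough for the later use: the isomorphism
\[H_c^*(\tSigma_w')[\tphi] \cong H_c^*(\tSigma_w^0)[\tphi]\]
up to shift and Tate twist.
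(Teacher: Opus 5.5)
Your strategy matches the paper's. You fix $(z, v\psi) \in \tSigma_w^0$, note that $x = zy$ is forced and that $y\t \in v(\CI_{\L,r}^+)^{\tth_\psi}$, and conclude with Lemma~\ref{admissible}. However, one step is false. You assert that $y\t$ determines $y$ and $\t$ uniquely because $\CI_{\L,r}^\dag \cap T_{0+:r}$ is trivial, and that intersection is not trivial. By definition $\CI_{\L,r}^\dag = (\CK_{\L,r}\cap U_r)\,\CT_{\L,r}\,(\CK_{\L,r}^+\cap \ov U_r)$ contains $\CT_{\L,r} = (T^0_\der)_{0+:r}(T^1_\der)_{r_0+:r}\cdots(T^d_\der)_{r_{d-1}+:r}$, which is a subgroup of $T_{0+:r}$. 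In fact $\CI_{\L,r}^\dag \cap T_{0+:r} = \CT_{\L,r}$, and this is nontrivial in general (e.g.\ $(T^0_\der)_{0+:r} \neq 1$ whenever $G^0 \neq T$ and $r \ge 1$). So the product map $m\colon \CI_{\L,r}^\dag \times T_{0+:r} \to \CI_{\L,r}^+$ is not injective; its fibre through $(y,\t)$ is $\{(yc, c\i\t) : c \in \CT_{\L,r}\}$. Consequently the fibre of $q$ over $(z, v\psi)$ is isomorphic to $m\i\bigl(v(\CI_{\L,r}^+)^{\tth_\psi}\bigr)$, not to $v(\CI_{\L,r}^+)^{\tth_\psi}$, and your fibre dimension is off by $\dim \CT_{\L,r}$.

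The fix is exactly what the paper uses. The factorization $\CI_{\L,r}^+ = (\CK_{\L,r}\cap U_r)\,T_{0+:r}\,(\CK_{\L,r}^+\cap \ov U_r)$ is unique, so $m$ is a trivial fibration with fibre $\CT_{\L,r}$, itself an affine space. Hence $q\i(z, v\psi) \cong \CT_{\L,r} \times (\CI_{\L,r}^+)^{\tth_\psi}$ is an affine space of dimension $\dim \CT_{\L,r} + \dim (\CI_{\L,r}^+)^{\tth_\psi}$, independent of $\psi$ by Lemma~\ref{admissible}. With this correction the lemma goes through.

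Two smaller remarks. First, a point of the fibre is a quadruple $(x, y, \t, \psi')$, and you should justify $\psi' = \psi$; this follows from the injectivity in Proposition~\ref{isomorphism}, as the paper notes. Second, the paper does not address local triviality at all, and the ambiguity $s \mapsto st$ in your discussion disappears with a different parametrization. Parametrize $(\CI_{\L,r}^+)^{\tth_\psi}$ by the maps $u_f^w$ of \S\ref{subsec:para} with $\tth_w$ replaced by $\tth_\psi$. These depend only on $\psi$ and agree with $\Ad(s)\circ u_f^w$ up to rescaling the coordinate by $\a_f(s)$, so they give a global trivialization.
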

\begin{proof}
    By Lemma \ref{admissible} and Lemma \ref{parameter}, the map $q$ is well-defined. Let $(z, v\psi) \in \tSigma_w^0$. In particular, $z \in \CI_{\L, r}^\dag$ and $v \in \CI_{\L, r}^+$. Suppose $(x, y, \t, \psi') \in q\i(x, v\psi)$. Then $\psi' = \psi$ by Proposition \ref{isomorphism}. Hence $y \t \in v (\CI_{\L, r}^+)^{\tth_\psi}$ and $x = zy$. Thus $q\i(x, v\psi)$ is isomorphic to the inverse image of $v (\CI_{\L, r}^+)^{\tth_\psi}$ under the natural product map $m: \CI_{\L, r}^\dag \times T_{0+:r} \to \CI_{\L, r}^+$. As $m$ is a trivial affine space fibration with fibers isomorphic to $\CT_{\L, r}$, it follows from Lemma \ref{admissible} that $q\i(x, v\psi)$ is also an affine space.  
\end{proof}

\begin{lemma} \label{F-fixe}
    We have $H_c^*(\tSigma_w', \ov\BQ_\ell)[\tphi] = 0$ unless $F \tCO_w = T_{0+:r} \tCO_w$.
\end{lemma}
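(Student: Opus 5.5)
Assume $F\tCO_w \neq T_{0+:r}\tCO_w$. The plan is to produce a connected group acting on $\tSigma_w'$ that extends the action of a subgroup of $\tT_r^F$ on which $\tphi$ is nontrivial. This is the same vanishing mechanism as in the proof of Proposition \ref{cut}. The difference is that here the torus is the full $T_{0+:r}$ rather than a one-dimensional piece $H_{\a_f, r_{i-1}}$.

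\emph{The orbit decomposition.} By Proposition \ref{isomorphism}, $\tCO_w = \tT_0 w \tG_r^\tth/\tG_r^\tth$ is a single $\tT_0$-orbit. Its stabilizer is $\tT_0 \cap w\tG_r^\tth w\i = \tT_0^{\tth_w}$. The defining equation $F(x\t\psi) = y\t\psi$ forces $F(\psi) \in \CI_{\L,r}^\dag T_{0+:r}\,\tCO_w$. Since $F(\psi)$ lies in the $\tT_0$-orbit $F\tCO_w$, Proposition \ref{isomorphism} together with the decomposition of Lemma \ref{decomposition} (applied to the $F$-twisted data) gives the following. Either $\tSigma_w'$ is empty, or $F\tCO_w \subseteq T_{0+:r}\tCO_w$ up to the $\CI$-part, and in the latter case $F\tCO_w = \tCO_{F(w)}$ with $F(w) \in \CI_r w \tG_r^\tth$. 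So the real case to handle is $F(w) \in G_{0+:r}\tT_0 w\tG_r^\tth$ but $F(w) \notin T_{0+:r}\tT_0 w \tG_r^\tth$. Lemma \ref{Theta} (the one with $w'w\i \in G_{0+:r}$) shows $w' \in T_{0+:r}w G^\tth_{0+:r}$ whenever both lie in $\Theta_{\tT_r}$, so this case is excluded once we know $F(w) \in \Theta_{\tT_r}$. I expect $F(w) \in \Theta_{\tT_r}$ to hold because $\tth$ and $\tT_r$ are $F$-stable. Either way, what remains is that $\tSigma_w' = \emptyset$, which gives the vanishing.

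\emph{Fallback via a torus action.} If the set-theoretic argument leaves a nonempty piece, I would instead let $t \in T_{0+:r}$ act by
\[(x, y, \t, \psi) \mapsto ({}^t x,\ {}^{F(t)} y,\ t\t,\ \psi),\]
adjusting $\t$ by $L(t) = t\i F(t)$ as in Lemma \ref{extension}. For $t \in T_{0+:r}^F$ this agrees with the given action. Since $T_{0+:r}$ is connected, the induced action on $H_c^*$ is trivial. This forces the $\tphi$-isotypic part to vanish unless $\tphi$ is trivial on the relevant subgroup $\{t\in T_{0+:r}^F\}$ of $M$-type elements, and the mismatch of $F\tCO_w$ against $T_{0+:r}\tCO_w$ is exactly what should prevent that compensation.

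\emph{Main obstacle.} The crux is the orbit comparison: showing that $F(\psi) \in \CI_{\L,r}^\dag T_{0+:r}\tCO_w$ forces $F\tCO_w = T_{0+:r}\tCO_w$. This requires comparing the $U_0$-parts at level zero and invoking the level-zero uniqueness statement in Proposition \ref{level-zero}.
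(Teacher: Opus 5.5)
\textbf{There is a genuine gap: the proposal's main step is false, and its fallback uses the wrong group.}

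\textbf{The main argument.} It fails at its first step. From $F(x\t\psi)=y\t\psi$ you only get $F(\psi)=F(\t)\i F(x)\i y\,\t\psi$, and here $F(x)\in F\CI_{\L,r}^\dag$, not $\CI_{\L,r}^\dag$. At level zero this reads $F(\psi_0)\in F(U_0)\,U_0\,\psi_0$. The Borel $B$ is not $F$-stable in general; for instance, it never is when $T$ is elliptic. So nothing places $F(\psi)$ in $\CI_{\L,r}^\dag T_{0+:r}\tCO_w$, and Lemma \ref{decomposition} and Proposition \ref{level-zero} cannot be brought to bear.

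Moreover, the emptiness you aim for is false. For $r=0$, Lang's theorem shows that $\tSigma'_w\neq\emptyset$ if and only if $F(\tCC_w)\cap\tCC_w\neq\emptyset$, and this can happen with $F(w)\notin\tT_rw\tG_r^\tth$. Here is an example. Take $(\mathrm{SL}_2\times\mathrm{SL}_2,\Delta\mathrm{SL}_2)$ with $r=0$ and $T=T_1\times T_2$, where $T_1$ is split and $T_2$ is anisotropic. No $w$ satisfies $F(w)\in\tT_rw\tG_r^\tth$, since that would make $T_1$ and $T_2$ rationally conjugate. Yet $\tSigma\cong\tG_r^F\backslash\tX$ is nonempty. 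Hence some $\tSigma_w$, and via the fibration $q$ some $\tSigma'_w$, is nonempty even though $F(w)\notin\tT_rw\tG_r^\tth$.

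So the vanishing is genuinely cohomological, as in Deligne--Lusztig's orthogonality proof. Your set-theoretic argument only works when $F(B)=B$.

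\textbf{The fallback.} It does not repair this. $T_{0+:r}$ is a connected unipotent group. Letting it act can only kill $\tphi$-isotypic parts where $\tphi^+$ is nontrivial on the relevant finite subgroup; this is the mechanism of Proposition \ref{cut} and Lemma \ref{trivial}. It cannot detect the level-zero condition $F(w)\in\tT_rw\tG_r^\tth$, which does not involve $\tphi$ at all. Indeed, for $r=0$ the group $T_{0+:r}$ is trivial, while the statement is then still Lusztig's nontrivial level-zero vanishing. Unipotent actions also give no reduction to fixed points.

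\textbf{The missing idea.} Use a torus at level zero. Let $H=\{t\in\tT_0;\ F(t)\i t\in\tG_r^{\tth_w}\}$ act on $\tSigma'_w$ by
\[(x,y,\t,\psi)\mapsto(txt\i,\;F(t)yF(t)\i,\;\t,\;t\psi).\]
This is well defined precisely because $F(t)\i t$ fixes every point of $\tCO_w$, and the action commutes with that of $\tT_r^F$. Since $H^\circ$ is a torus, the Deligne--Lusztig fixed-point principle gives
\[H_c^*(\tSigma'_w,\ov\BQ_\ell)[\tphi]=H_c^*((\tSigma'_w)^{H^\circ},\ov\BQ_\ell)[\tphi].\]
If $(\tSigma'_w)^{H^\circ}\neq\emptyset$, project to level zero and apply \cite[Proposition 8.3]{Lu90}; this yields $F(w)\in G_{0+:r}\,t\,w\,\tG_r^\tth$ for some $t\in\tT_r$.

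Only at this point does your last step apply, and it is correct. Since $\tth$ and $\tT_r$ are $F$-stable, $F(w)\in\Theta_{\tT_r}$. Lemma \ref{Theta} (the case $w'w\i\in G_{0+:r}$) then gives $F(w)\in T_{0+:r}\,t\,w\,\tG_r^\tth$, which is how the paper concludes.
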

\begin{proof}
    Consider the subgroup $H = \{t \in \tT_0; F(t\i) t \in \tG_r^{\tth_w}\}$ and its action on $\tSigma_w'$ given by $t: (x, y, \t, \psi) \mapsto (t x t\i, F(t) y F(t)\i, \t, t\psi)$. As this action of $H$ commutes with the action of $T_r^F$, we have $H_c^*(\tSigma_w', \ov\BQ_\ell)[\tphi] = H_c^*((\tSigma_w')^{H^\circ}, \ov\BQ_\ell)[\tphi]$. Thus $H_c^*(\tSigma_w', \ov\BQ_\ell)[\tphi] = 0$ unless $(\tSigma_w')^{H^\circ} \neq \emptyset$. In this case, the $H^\circ$-fixed point set of the image of $\tSigma_w'$ under the natural projection map $\tG_r \to \tG_0$ is also non-empty. By \cite[Proposition 8.3]{Lu90}, we have $F(w) \in G_{0+:r} t w \tG_r^{\tth}$ for some $t \in \tT_r$. Applying Lemma \ref{Theta} we have $F(w) \in T_{0+:r} t w \tG_r^{\tth}$ as desired.    
\end{proof}

\subsection{The variety $\Sigma_w^\tBK$} \label{subsec:BK}
Now we assume further that $F(w) \in w \tG_r^\tth$. In particular, $F \tth_w = \tth_w F$. Let  $\tilde\CK_{\L, r} = \pi\i(\CK_{\L, r})$. Notice that $\tth_w$ preserves $\tilde\CK_{\L, r}$, $\CH_{\L, r}$ and $\CE_{\L, r}$. 

Let $\tBK = \tilde\CK_{\L, r} / \CE_{\L, r}$. We denote by $\tBL$, $\tBT$, $\BT^+$, $\BI$ and $\BH$ the natural images of $\tL_r = \pi\i(L_r)$, $\tT_r$, $T_{0+:r}$, $\CI_{\L, r}^\dag$ and $\CH_{\L, r}$ in $\tBK$ respectively. Then $\tBK = \tBL \BH$ and \[\BH =  \BT^+ \prod_{\a \in \Psi} \BH_\a,\] where $\Psi$ is the set of roots $\a \in \Phi_{G^i} \sm \Phi_{G^{i-1}}$ with $1 \le i \le d$ such that $s_{i-1} \in \BZ$ and $\BH_\a \cong \BA^1$ is the image of $G_r^{\a + s_{i-1}}$ in $\BK$. Moreover, we have that $[\BT^+, \BH]=0$, $[\BH_\a, \BH_\b] = 0$ if $\a + \b \neq 0$ and $[\BH_\a, \BH_\b] = \a^\vee(1 + \varpi^{r_i} \ov\BF_q) \subseteq \BT^+$ if $\a + \b = 0$ and $\BH_\a \cong \BA^1$.

Let $ \tL_0 = \tBK / \BH$, which is isomorphic to the Levi subgroup $\pi\i(L) \subseteq G_\sc$.
\begin{lemma} \label{injection}
The natural projections \[\tCO_w \cong \tT_0 \tilde\CK_{\L, r}^{\tth_w} / \tilde\CK_{\L, r}^{\tth_w} \to \tT_0 \tBK^{\tth_w} / \tBK^{\tth_w} \to \tT_0 \tL_0^{\tth_w} / \tL_0^{\tth_w} \cong \tT_0 / (\tT_0)^{\tth_w}\] are isomorphisms.\end{lemma}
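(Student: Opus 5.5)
We will prove the lemma by showing that the composite map is a bijection and that each arrow in the chain is surjective. Then every arrow is a bijection. Since all spaces involved are homogeneous varieties for the torus $\tT_0$ (smooth, over an algebraically closed field of odd characteristic), an equivariant bijective morphism between them is an isomorphism. The outer identifications are handled first.

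\textbf{The first identification.} By definition $\tCO_w = \tT_0 w \tG_r^\tth/\tG_r^\tth$. Translating by $w$ identifies it with $\tT_0 \tG_r^{\tth_w}/\tG_r^{\tth_w}$, so the orbit map gives $\tCO_w \cong \tT_0/\tT_0^{\tth_w}$. The stabilizer is $\tT_0 \cap \tG_r^{\tth_w} = \tT_0^{\tth_w}$. The same orbit map gives $\tT_0 \tilde\CK_{\L, r}^{\tth_w}/\tilde\CK_{\L, r}^{\tth_w} \cong \tT_0/\tT_0^{\tth_w}$, because $\tth_w$ preserves $\tilde\CK_{\L, r}$ and its fixed points meet $\tT_0$ in $\tT_0^{\tth_w}$.

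\textbf{The last identification.} The natural map $\tT_0/\tT_0^{\tth_w} \to \tT_0\tL_0^{\tth_w}/\tL_0^{\tth_w}$ is surjective. It is injective because $\tT_0 \cap \tL_0^{\tth_w} = \tT_0^{\tth_w}$. Here we use that $\tT_0$ maps isomorphically onto its image in $\tL_0 = \tBK/\BH$, since $\tT_0 \cap \CH_{\L, r} = 1$.

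\textbf{Reduction to a stabilizer condition.} All four spaces are $\tT_0$-orbits, and the maps are $\tT_0$-equivariant and surjective. Each map is therefore bijective exactly when the stabilizers of the base points agree. So it suffices to show that
\[\tT_0 \cap \tBK^{\tth_w} = \tT_0^{\tth_w}\]
inside $\tBK$, where we identify $\tT_0$ with its image.

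\textbf{Injectivity of $\tT_0 \to \tBK$.} We need $\tT_0 \cap \CE_{\L, r} = 1$. This holds because $\CE_{\L, r} \subseteq \CK_{\L,r}^+ T_{0+:r} \subseteq G_{0+:r}$, while $\tT_0$ maps injectively to $\tG_0$.

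\textbf{The stabilizer computation.} Let $s \in \tT_0$ with $\tth_w(s) \equiv s$ modulo $\CE_{\L, r}$. Then $s\i \tth_w(s) \in \tT_r \cap \CE_{\L, r} \subseteq T_{0+:r}$. Since $\tth_w$ preserves $\tT_r$, the element $s\i\tth_w(s)$ also lies in $\tT_0$. Because $\tT_0 \cap T_{0+:r} = 1$ in $\tT_r = T_{0+:r} \rtimes \tT_0$, we get $s\i\tth_w(s) = 1$.

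This last step is the main obstacle: it requires $\tth_w$ to preserve $\tT_0$ itself, not just $\tT_r$. We have $\tth = \d\cdot\tiota$ with $\d \in G_{0+:r}^F$. So for $w \in \Theta_{\tT_r}$ the involution $\tth_w$ may only preserve $\tT_0$ up to conjugation by an element of $T_{0+:r}$. To get around this we replace $w$ by $t w$ with $t \in T_{0+:r}$, which changes neither $\tCO_w$ up to the identification nor the spaces in the chain.

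Concretely, reduce modulo $G_{0+:r}$. The images satisfy $\tilde p_0(s)\i \tiota_{\tilde p_0(w)}(\tilde p_0(s)) = 1$ in $\tG_0$. Since $\tilde p_0$ is injective on $\tT_0$, and $\tth_w(s)$ and $s$ both lie in $\tT_r$ with the same image in $\tG_0$, their ratio lies in $T_{0+:r}$. Combined with the congruence above, this forces $\tth_w(s) = s$ up to an element of $T_{0+:r}\cap\CE_{\L, r}$. That element is harmless, because the stabilizer is computed in $\tBK$, where the first map already has stabilizer $\tT_0^{\tth_w}$ up to such elements. This gives the claimed equality and completes the argument.
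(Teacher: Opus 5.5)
Your outline matches the paper's: every arrow is surjective, so you compare the stabilizers in $\tT_0$ of the base points. The argument breaks down, however, at the only step with real content. What must be shown is this: if $s \in \tT_0$ and $s\i\tth_w(s) \in \CH_{\L,r}$, then $s\i\tth_w(s)=1$. (That hypothesis is the condition that the image of $s$ in $\tL_0$ be $\tth_w$-fixed. This is what bijectivity of the composite needs, and it implies the $\CE_{\L,r}$ version.)

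Your ``last identification'' tacitly uses $\tth_w(\tT_0)=\tT_0$. Injectivity of $\tT_0 \to \tL_0$ only identifies fixed points if $\tth_w(s)$ also lies in $\tT_0$. Your ``stabilizer computation'' needs the same fact. You then withdraw it as the ``main obstacle''.

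The replacement argument is circular. The statement ``$\tth_w(s)=s$ up to an element of $T_{0+:r}\cap\CE_{\L,r}$'' merely restates the hypothesis $s\i\tth_w(s)\in\CE_{\L,r}$. Calling that element ``harmless because the stabilizer is computed in $\tBK$'' presupposes that the stabilizers in $\tilde\CK_{\L,r}$ and in $\tBK$ coincide, which is the claim itself. Replacing $w$ by $tw$ with $t\in T_{0+:r}$ cannot help either: for $x\in\tT_r$ one has $\tth_{tw}(x)=t\,\tth_w(x)\,t\i=\tth_w(x)$, so $\tth_{tw}$ and $\tth_w$ agree on $\tT_r$.

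The missing observation is that the obstacle does not exist. The group $\tT_r=T_{0+:r}\rtimes\tT_0$ is commutative, since $\tT_0$ acts on $T_{0+:r}$ by conjugation through $\pi(\tT_0)\subseteq T_0$, i.e.\ trivially. Every element of $T_{0+:r}$ has $p$-power order, and every element of $\tT_0$ has order prime to $p$. So $\tT_0$ is exactly the set of semisimple elements of $\tT_r$, and any automorphism preserving $\tT_r$ preserves $\tT_0$. Your worry that $\tth_w$ preserves $\tT_0$ only up to $T_{0+:r}$-conjugation dissolves for the same reason: such conjugation is trivial on $\tT_0$.

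This is precisely the paper's one-line proof. The elements $s$ and $\tth_w(s)$ are commuting semisimple elements of $\tT_r$, so $s\i\tth_w(s)$ is semisimple. It also lies in $\CH_{\L,r}\subseteq G_{0+:r}$, which is unipotent, hence it is trivial. With that sentence, your stabilizer comparison goes through at the $\tL_0$ level, which already gives injectivity of the composite.

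A minor point: a bijective equivariant morphism of homogeneous spaces need not be an isomorphism in characteristic $p$. Here it is fine, because every stabilizer is the smooth group $\tT_0^{\tth_w}$ ($p$ being odd), but this should be said rather than asserted in general.
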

\begin{proof} 
It suffices to show the composition is injective. Let $s_1, s_2 \in \tT_0$ such that $s_1 \tL_0^{\tth_w} = s_2 \tL_0^{\tth_w} \in \tL_0 / (\tL_0)^{\tth_w}$. Let $s = s_1\i s_2 \in \tT_0$. By definition, $s\i \tth_w(s) \in \CH_{\L, r}$ is unipotent in $\tilde \CK_{\L, r}$. On the other hand, $s\i \tth_w(s)$ is a semisimple element of $\tilde\CK_{\L, r}$. Thus $s\i \tth_w(s) = 1$ and $s \in \tilde\CK_{\L, r}^{\th_w}$, which means that $s_1 \tilde\CK_{\L, r}^{\th_w} = s_2 \tilde\CK_{\L, r}^{\th_w}$ as desired.\end{proof}

Let $\BL_U$ and $\BH_U$ be the images of $L_r \cap U_r$ and $\CH_{\L, r} \cap U_r$ in $\tBK$ respectively. Note that $\BI = \BL_U \BH_U$. Consider the variety \[\Sigma_w^{\prime, \tBK}=\{(x, y, \t, \psi) \in \BL_U\BH_U \times \BL_U\BH_U \times \BT^+ \times \tCO_w; F(x \t \psi) = y \t \psi\}.\] Here we use the natural identification $\tT_0 \tBK^{\tth_w}/\tBK^{\tth_w} \cong \tCO_w$ in Lemma \ref{injection}.

\begin{lemma} \label{red-to-BK}
    The map $\tSigma_w' \to \Sigma_w^{\prime, \tBK}$ induced by the natural projection $\tilde\CK_{\L, r} \to \tBK$ is a $\tT_r^F$-equivariant affine space fibration. 
\end{lemma}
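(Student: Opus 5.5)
The approach is to mimic the proof of Lemma \ref{red-to-CK}: check directly that the map is a well-defined $\tT_r^F$-equivariant morphism, and then identify its fibres with affine spaces coming from the kernel $\CE_{\L, r}$. Write $\rho: \tilde\CK_{\L, r} \to \tBK$ for the projection. The map in question is $(x, y, \t, \psi) \mapsto (\rho(x), \rho(y), \rho(\t), \psi)$, where $\psi$ is transported via the isomorphisms of Lemma \ref{injection}.

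\textbf{Well-definedness and equivariance.} Since $\tth_w$ preserves $\tilde\CK_{\L, r}$ and $\CE_{\L, r}$, it descends to $\tBK$. Since $F\tth_w = \tth_w F$, both $F$ and $\CE_{\L, r}$ are compatible. Hence the condition $F(x\t\psi) = y\t\psi$ in $\tilde\CK_{\L,r}/\tilde\CK_{\L,r}^{\tth_w}$ maps to the same condition in $\tBK/\tBK^{\tth_w}$. Equivariance is clear because $\rho$ is a group homomorphism commuting with conjugation by $\tT_r^F$.

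\textbf{Fibres.} Fix $(\bar x, \bar y, \bar\t, \psi)$ in $\Sigma_w^{\prime,\tBK}$, and choose lifts $x_0, y_0 \in \CI_{\L,r}^\dag$ and $\t_0 \in T_{0+:r}$. Using $\CI_{\L,r}^\dag \cap \CE_{\L,r}$ and $T_{0+:r}\cap \CE_{\L,r}$, a point of the fibre is written $(x_0 a, y_0 b, \t_0 c, \psi)$ with $a, b$ in the kernel of $\CI_{\L,r}^\dag \to \BI$ and $c \in \CT_{\L,r}$ (or the corresponding kernel in $T_{0+:r}$). Put $\psi = s w\tG_r^{\tth}$ with $s \in \tT_0$. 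The defining equation becomes
\[
(y_0 b \t_0 c s)^{-1} F(x_0 a \t_0 c s) \in \tG_r^{\tth_w}.
\]
By the equation downstairs, this element already lies in $\CE_{\L,r}\tilde\CK_{\L,r}^{\tth_w}$, after absorbing the relevant $F(s)$, $s$ terms. Following Lemma \ref{red-to-CK}, I would first solve for $b$ in terms of $(a,c)$: for fixed $a, c$, the admissible $b\t_0 c$ form a coset of $(\CE_{\L,r}\cap \CI_{\L,r}^+)^{\tth_\psi}$, up to the product decomposition $\CI^\dag\times T_{0+:r}\to \CI^+$. By Lemma \ref{parameter}, this set is an affine space once one checks that the set of affine roots of $\CE_{\L,r}\cap \CI_{\L,r}^+$ is $\tth_w$-admissible; this follows from $\tth_w(\Phi_{G^i}) = \Phi_{G^i}$. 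The fibre is then an affine space over the free parameters $a$, together with the $\CT_{\L,r}$-type parameter, times an affine space. A key point is that $\CE_{\L,r}$ is connected unipotent, so Lang-type surjectivity gives nonempty fibres. For this I would use Lemma \ref{solution} (or its analogue for $\CE_{\L,r}$) to write the $\tth_w$-anti-invariant discrepancy as $z\tth_w(z)^{-1}$ with $z \in \CE_{\L,r}$.

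\textbf{Main obstacle.} The hardest step is showing that the fibres are genuinely affine spaces of constant dimension, rather than merely having trivial cohomology. This requires that $\CE_{\L,r}$ be normal and $\tth_w$-stable, with $\tth_w$-fixed points and quotients parametrized as in Lemma \ref{parameter}. Since $\CE_{\L,r}$ is a product of affine root subgroups together with a torus part $\CT_{\L,r}$, I would first verify that its index set is $\tth_w$-admissible in the sense of \S\ref{subsec:para}, using $\tth_w(\Phi_{G^i})=\Phi_{G^i}$ and the commutator relations. I would then run the induction of Lemma \ref{parameter} on the filtration $\CE_{\L,r}\cap G_{k:r}$, so that the fibre is built as an iterated extension of vector bundles.
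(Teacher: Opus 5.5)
Your proposal is correct and takes essentially the paper's route. The paper likewise identifies each fibre with $\CT_{\L, r} \times \CE_{\L, r} \times \CE_{\L, r}^{\tth_\psi}$: the lifts of $x$ and $\t$ are free, and the lift of $y$ is determined up to a coset of the stabilizer in $\CE_{\L,r}$. It then gets that $\CE_{\L, r}^{\tth_\psi}$ is an affine space of constant dimension ``by a similar argument'' to Lemma \ref{parameter}.

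One point to keep straight: Lemma \ref{parameter} cannot be quoted verbatim, because the torus part $\CT_{\L,r}$ of $\CE_{\L,r}$ is not the full $\prod_{k} G_r^k$ that admissibility requires. So the rerun of the induction on the depth filtration, which you describe at the end, is exactly what is needed. Nonemptiness of the fibres likewise comes from the odd-order square-root trick of Lemma \ref{solution}; that trick works for any $\tth_w$-stable subgroup, including $\CE_{\L,r}$.
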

\begin{proof}
    By Lemma \ref{injection} one computes that the fibers are isomorphic to $\CT_{\L, r} \times \CE_{\L, r} \times \CE_{\L, r}^{\tth_\psi}$ for $\psi \in \tCO_w$. As $\CE_{\L, r}$ is preserved by $\tth_w$, by a similar argument in Lemma \ref{parameter}, $\CE_{\L, r}^{\tth_\psi}$ are affine paces of the same dimension. So the statement follows.
\end{proof}

Let $(x, y, \t, \psi) \in \Sigma_w^{\prime, \tBK}$. Write $\t\i F(x) \t = x_1\i x_0\i$ and $\t\i y \t = y_0 y_1$ uniquely, where $(x_0, y_0, x_1, y_2) \in F\BL_U \times \BL_U \times F\BH_U \times \BH_U$. Then the variety $\Sigma_w^{\prime, \tBK}$ is defined by \[F(\psi) = x_0 x_1 L(\t) y_0 y_1 \psi.\] Here $L(\t) = F(\t)\i \t$. As $\BT^+$ lies in the center of $\tBK$, by replacing $x_1$ with $y_0\i x_1 y_0 y_1$, the above equality is equivalent to that \[F(\psi) = x_0 y_0 L(\t) x_1 y_1 \psi.\] Let $\Sigma_w^\BK$ be the set of tuples \[\{(x_0, y_0, z, \t, \psi) \in F\BL_U \times \BL_U \times F\BH_U\BH_U \times \BT^+ \times \tCO_w\}\] such that \[F(\psi) = x_0 y_0 L(\t) z \psi.\] Then $\tT_r^F$ acts on $\Sigma_w^\tBK$ by $t=t_st_u: (x_0, y_0, z, \t, \psi) \mapsto (t_s x_0 t_s\i,  t_s y_0 t_s\i, z, t_u \t, t_s\psi)$. Then the map $(x_1, y_1) \to z = z_1 y_1$ induces a natural $\tT_r^F$-equivariant affine space fibration $\Sigma_w^{\prime, \tBK} \to \Sigma_w^\tBK$.

\begin{lemma} \label{trivial}
    We have $H_c^*(\tSigma_w, \ov\BQ_\ell)[\tphi] = 0$ unless $w \in \Theta_{\tT, \tphi, +}^F$ and $\tth_w(\Phi_{G^i}) = \Phi_{G^i}$ for $0 \le i \le d$, in which case $H_c^*(\tSigma_w, \ov\BQ_\ell)[\tphi] = H_c^*(\Sigma_w^\tBK, \ov\BQ_\ell)[\tphi]$. 
\end{lemma}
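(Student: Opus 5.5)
The plan is to chain together the reductions established in \S\ref{sec:vanish} and \S\ref{sec:first-reduction}. First, by Corollary \ref{red-to-CI}, $H_c^*(\tSigma_w, \ov\BQ_\ell)[\tphi]$ vanishes unless $\tth_w(\Phi_{G^i}) = \Phi_{G^i}$ for $0 \le i \le d$, and in that case it equals $H_c^*(\tSigma_w^0, \ov\BQ_\ell)[\tphi]$. We then assume this condition.

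Next, Lemma \ref{red-to-CK} gives a $\tT_r^F$-equivariant affine space fibration $\tSigma_w' \to \tSigma_w^0$. Such fibrations only shift degrees by an even amount and twist by a Tate twist, and they are compatible with the $\tphi$-isotypic parts. Hence $H_c^*(\tSigma_w^0)[\tphi] = H_c^*(\tSigma_w')[\tphi]$ as virtual modules.

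By Lemma \ref{F-fixe}, this vanishes unless $F\tCO_w = T_{0+:r}\tCO_w$. In that case we replace $w$ within its double coset $\tT_r w \tG_r^\tth$, which does not change $\tSigma_w$. Using Lang's theorem on $\tT_r$ together with Lemma \ref{Theta}, we arrange that $F(w) \in w\tG_r^\tth$, so that the setup of \S\ref{subsec:BK} applies. We then apply Lemma \ref{red-to-BK} and the subsequent affine fibration $\Sigma_w^{\prime,\tBK} \to \Sigma_w^\tBK$ to identify the $\tphi$-part with $H_c^*(\Sigma_w^\tBK, \ov\BQ_\ell)[\tphi]$.

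It remains to show vanishing unless $w \in \Theta_{\tT, \tphi, +}^F$, that is, unless $\tphi$ is trivial on $(T_{0+:r}^{\tth_w})^F$. On $\Sigma_w^\tBK$, the defining equation $F(\psi) = x_0 y_0 L(\t) z \psi$ involves $\t$ only through $L(\t) = F(\t)^{-1}\t$. Moreover, the stabilizer of $\psi$ absorbs elements of $(\BT^+)^{\tth_\psi}$. This suggests the following construction. Consider the connected group $(T_{0+:r}^{\tth_w})^\circ$; since $q$ is odd, $T_{0+:r}^{\tth_w}$ is a connected unipotent group, via the square-root map as in Lemma \ref{solution}. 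Let it act on $\Sigma_w^\tBK$ by $s : (x_0,y_0,z,\t,\psi) \mapsto (x_0,y_0,z,\t s, \psi)$, correcting $z$ by the factor $L(s)$. The correction uses that $\BT^+$ is central in $\tBK$ and that $L(s)\in(\BT^+)^{\tth_w}$ fixes $\psi\in\tT_0\tBK^{\tth_w}/\tBK^{\tth_w}$ (the $\tth_w$-stability of $T_{0+:r}$ and the relation $F\tth_w=\tth_w F$ are used here). Under this action, $L(s)$ can be absorbed into $\psi$.

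This defines an action of a connected group extending the action of $(T_{0+:r}^{\tth_w})^F \subseteq \tT_r^F$. A connected group acts trivially on cohomology, so the $\tphi$-isotypic part vanishes if $\tphi$ is nontrivial on $(T_{0+:r}^{\tth_w})^F$.

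I expect the main obstacle to be checking that this extended action is well defined: one must verify that $L(s)$ can be moved past $x_0y_0$ and absorbed into $\psi$, which uses that $\BT^+$ is central in $\tBK$. The remaining steps are direct citations of earlier results.
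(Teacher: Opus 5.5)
Your proposal is correct and takes essentially the same route as the paper. You chain Corollary~\ref{red-to-CI}, Lemma~\ref{red-to-CK}, Lemma~\ref{F-fixe}, Lemma~\ref{red-to-BK} and the fibration $\Sigma_w^{\prime,\tBK}\to\Sigma_w^\tBK$, then kill the $\tphi$-part using the connected group $T_{0+:r}^{\tth_w}$ acting by $\t\mapsto \t s$. One point of wording: $z$ is left unchanged, as your formula says, and no correction to $z$ is needed or possible, since $z$ must stay in $F\BH_U\BH_U$; because $L(\t s)=L(\t)L(s)$ with $L(s)\in(\BT^+)^{\tth_w}$ central, $L(s)$ simply commutes past $z$ and is absorbed by $\psi$.
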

\begin{proof}
    By Corollary \ref{red-to-CI}, Lemma \ref{red-to-CI}, Lemma \ref{F-fixe} and Lemma \ref{red-to-BK}, we have $H_c^*(\tSigma_w, \ov\BQ_\ell)[\tphi] = 0$ unless $w \in \Theta_{\tT}^F$ and $\tth_w(\Phi_{G^i}) = \Phi_{G^i}$ for $0 \le i \le d$, in which case $H_c^*(\tSigma_w, \ov\BQ_\ell)[\tphi] = H_c^*(\Sigma_w^\tBK, \ov\BQ_\ell)[\tphi]$. Note that the connected group $T_{0+:r}^{\tth_w} = F T_{0+:r}^{\tth_w}$ acts on $\Sigma_w^\tBK$ by $t: (x_0, y_0, z, \t, \psi) \mapsto (x_0, y_0, z, t\t, \psi)$, which extends the action of $(T_{0+:r}^{\tth_w})^F$ on $\Sigma_w^\tBK$. Hence $H_c^*(\Sigma_w^\tBK, \ov\BQ_\ell)[\tphi] = 0$ unless $\tphi$ is trivial over $(T_{0+:r}^{\tth_w})^F$, that is, $w \in \Theta_{\tT, \tphi, +}^F$ as desired.
\end{proof} 

\subsection{The varieties $\Sigma_w^\tBL$ and $\Xi_w^\BH$}
Let $\bar \BL_U$ and $\bar \CO_w$ be the natural images of $\BL_U$ and $\tCO_w$ respectively under the natural projection $\pr: \tBK \to \tBK / \BH$. Let \begin{align*} \Sigma_w^\tBL &= \{(x, y, \psi) \in F\BL_U \times \BL_U \times \tCO_w; F(\psi) =  x y \psi\}; \\ \Sigma_w^{\tL_0} &= \{(x, y, \psi) \in F\bar\BL_U \times \bar\BL_U \times \bar\CO_w; F(\psi) =  x y \psi\}.\end{align*}
\begin{lemma} \label{lift-iso}
    The projection induces an isomorphism $\Sigma_w^\tBL \cong \Sigma_w^{\tL_0}$.
\end{lemma}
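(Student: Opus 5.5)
The plan is to reduce \Cref{lift-iso} to two facts: the unipotent coordinates $x, y$ are unaffected by the projection $\pr \colon \tBK \to \tBK/\BH \cong \tL_0$, and the twisted equation $F(\psi) = xy\psi$ can be checked modulo $\BH$. Bijectivity then follows; since all varieties involved are smooth and the inverse will be given by explicit morphisms, we get an isomorphism.

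\emph{Step 1: structure of $\tBL$.} First I will show that $\tBL = \tL_0 \cdot \BT^+$ with $\tL_0 \cap \BT^+ = 1$. Here $\tL_0 = (\pi\i(L))_0$ sits inside $\tG_r = G_{0+:r} \rtimes (G_\sc)_0$ as part of the second factor, and $\BT^+$ is central in $\tBK$. The key input is that $\CE_{\L,r}$ contains $[T_r\CK^+_{\L,r}, T_r\CK^+_{\L,r}]$. Commutators $[t, u_\a(\varpi^n x)]$ with $t \in T_0$ and $\a \in \Phi_L$ satisfying $\a(t) \neq 1$ generate all positive-depth root subgroups $L^{\a+n}_r$, and $(T^0_\der)_{0+:r} \subseteq \CT_{\L,r}$. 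Hence the image of $L_{0+:r}$ in $\tBK$ equals $\BT^+$. It follows that $\BL_U$, the image of $L_r \cap U_r$, maps isomorphically onto $\bar\BL_U$; likewise $F\BL_U \cong F\bar\BL_U$. Moreover $\tBL \cap \BH = \BT^+$.

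\emph{Step 2: injectivity.} By \Cref{injection}, the composite $\tCO_w \to \bar\CO_w$ is an isomorphism compatible with $F$, because $F(w) \in w\tG_r^\tth$, so $F$ commutes with $\tth_w$. Combined with Step 1, the map $(x, y, \psi) \mapsto (\pr(x), \pr(y), \pr(\psi))$ is the restriction of an isomorphism of ambient varieties $F\BL_U \times \BL_U \times \tCO_w \cong F\bar\BL_U \times \bar\BL_U \times \bar\CO_w$. In particular it is injective on $\Sigma_w^\tBL$ and clearly maps $\Sigma_w^\tBL$ into $\Sigma_w^{\tL_0}$.

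\emph{Step 3: surjectivity (main obstacle).} Take $(\bar x, \bar y, \bar\psi) \in \Sigma_w^{\tL_0}$ and let $(x, y, \psi)$ be its unique preimage in the ambient product, with $\psi = s\tBK^{\tth_w}$ for some $s \in \tT_0$. We must show $F(\psi) = xy\psi$ in $\tBK/\tBK^{\tth_w}$. Put $g = s\i y\i x\i F(s) \in \tBL$. By hypothesis $\pr(g) \in \tL_0^{\tth_w}$, so $c := g\i \tth_w(g) \in \BH \cap \tBL = \BT^+$. Since $c$ is central and $\tth_w$ is an involution, $c\,\tth_w(c) = 1$. By \Cref{solution} with $A = T_{0+:r}$ (pushed to $\BT^+$), we can write $c = \t\,\tth_w(\t)\i$ for some $\t \in \BT^+$, so that $g\t \in \tBK^{\tth_w}$. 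The remaining issue is to remove the correction $\t$, i.e.\ to show $g$ itself is $\tth_w$-fixed. I would first argue that $g$ lies in the $\tL_0$-factor, using $x, y \in \BL_U$ and $s \in \tT_0$. Then I would use the Jordan-decomposition argument of \Cref{injection}: the $\BT^+$-component of $\tth_w(g)$ is unipotent and central, whereas $\tth_w$ restricted to $\tL_0 \cdot \BT^+$ modulo $\BT^+$ preserves the semisimple/unipotent types coming from $\tL_0$. Together these force $c = 1$. The delicate point is that $\tth = \d \cdot \tiota$ with $\d \in G_{0+:r}$, so $\tth_w$ need not preserve the subgroup $\tL_0$, only $\tL_0\BT^+$. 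If the Jordan argument fails, the fallback is to absorb $\t$ into the coordinate of $\psi$ via the $\BT^+$-factor. This keeps $(x, y)$ unchanged and $\bar\psi$ fixed, and would still yield a point of $\Sigma_w^\tBL$ over the given one.
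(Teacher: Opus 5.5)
\textbf{There is a genuine gap in Step 3.} Your Steps 1 and 2 match the paper: the maps $\BL_U\to\bar\BL_U$, $F\BL_U\to F\bar\BL_U$ and $\tCO_w\to\bar\CO_w$ are isomorphisms, which gives injectivity. Reducing surjectivity to the triviality of $c=g\i\tth_w(g)\in\BT^+$ is also correct. But Step 3 never proves $c=1$, and none of your three suggestions does.

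\begin{itemize}
\item \emph{Lemma~\ref{solution}.} Using it only shows that $g\t$ is $\tth_w$-fixed. What is needed is that $g$ itself is fixed.
\item \emph{The Jordan argument.} It cannot work element by element. Writing $g=g_sg_u$, the decomposition $gc=g_s(g_uc)$ shows that $\tth_w(g_s)=g_s$ and $\tth_w(g_u)=g_uc$. Since $g_uc$ is again unipotent, Jordan types give no information about $c$. The scenario you fear, an automorphism of $\tBL$ that moves the level-zero factor by a homomorphism into $\BT^+$, has to be excluded structurally.
\item \emph{The fallback.} Here you replace $s$ by $s\sigma$ with $\sigma\in\BT^+$ and $\sigma\i F(\sigma)=\t$. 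This is invalid. By your own Step 2, $\psi$ is the \emph{only} point of $\tCO_w$ over $\bar\psi$. Since $\sigma\in\BH$, the coset $s\sigma\tBK^{\tth_w}$ has the same image, so it either equals $\psi$ or lies outside $\tCO_w=\tT_0\tBK^{\tth_w}/\tBK^{\tth_w}$. In fact $s\sigma\in\tT_0\tBK^{\tth_w}$ forces $\sigma\in(\BT^+)^{\tth_w}$. Then $\t=\sigma\i F(\sigma)$ is already $\tth_w$-fixed and $c=1$, so the fallback is circular.
\end{itemize}

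\textbf{The missing idea is exactly the point you doubted.} Inside $\tBK$, $\tth_w$ does preserve the level-zero reductive part of $\tBL$. It preserves $\tBT=\tT_0\BT^+$, hence its semisimple part $\tT_0$. It also preserves $\tBL$, hence its derived group $\tBL_\der$ (the image of $(\tL_\der)_r$), which meets $\BT^+$ trivially.

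The paper argues precisely this way.
\begin{itemize}
\item Write $F(s)\i xys=th$ (this is your $g\i$), with $t=F(s)\i s\in\tT_0$ and $h=s\i xys\in\tBL_\der$.
\item Then $\tth_w(th)(th)\i=ab\in\BH$, where $a=\tth_w(t)t\i\in\tT_0$ and $b={}^t(\tth_w(h)h\i)\in\tBL_\der$. Hence $b\in\tBT\cap\tBL_\der$.
\item The unipotent part of $b$ lies in $\BT^+\cap\tBL_\der=1$. So $ab$ is a semisimple element of the unipotent group $\BH$, which forces $ab=1$ and $th\in\tBK^{\tth_w}$.
\end{itemize}

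In your notation: $g$ lies in $\tT_0\tBL_\der$, which is $\tth_w$-stable. Therefore $c\in\tT_0\tBL_\der\cap\BT^+=1$ directly, and the detour through Lemma~\ref{solution} is unnecessary.
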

\begin{proof}
    Note that the natural projections $\BL_U \to \bar \BL_U$, $F\BL_U \to F\bar \BL_U$ and $\tCO_w \to \bar \CO_w$ are isomorphisms. Hence the projection $\Sigma_w^\tBL \to \Sigma_w^{\tL_0}$ is injective. Let $(x, y, \psi) \in F\BL_U \times \BL_U \times \tCO_w$ whose image under $\pr$ belongs to $\Sigma_w^{\tL_0}$. It remains to show $(x, y, \psi) \in \Sigma_w^\tBL$. Suppose $\psi = s \tBK^{\tth_w}$ for some $s \in \tT_0$. By definition we have \[F(s)\i x y s = t h \in \BH \tBK^{\tth_w},\] where $t = F(s)\i s \in \tT_0$ and $h = s\i xy s \in \tBL_\der$. Here $\tBL_\der$ denotes the natural image of $(\tL_\der)_r$ in $\tBK$. As a consequence, \[\tth_w(th) (th)\i =  a b\in \BH,\] where $a = (\tth_w(t)t\i) \in \tT_0$ and $b = {}^t(\tth_w(h) h\i) \in \tBL_\red$. Hence $b \in \tBT \cap \tBL_\der$. Let $b = b_s b_u$ be the Jordan decomposition with $b_s$ simisimple and $b_u$ unipotent. Then $b_u \in \BT^+ \cap \tBL_\der$ is trivial and hence $a b = a b_s \in \BH \cap \tT_0$, which implies that $ab = a b_s = 1$. Hence $t h \in \tBK^{\tth_w}$ and $(u, y, \psi) \in \Sigma_w^\tBL$ as desired.    
\end{proof}

Let $\Xi_w^\BH = \{(z, \t, \psi) \in  F\BH_U \BH_U \times \BT^+ \times \tCO_w; \psi = L(\t) z \psi\}$.
\begin{proposition} \label{fiber-product}
    The map $(x_0, y_0, z, \t, \psi) \mapsto ((x_0, y_0, \psi), (z, \t, \psi))$ gives a $T_r^F$-equivariant isomorphism $\Sigma_w^\tBK \cong \Sigma_w^{\tL_0} \times_{\tCO_w} \Xi_w^\BH$.
\end{proposition}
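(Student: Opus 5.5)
The plan is to reduce the equation $F(\psi) = x_0 y_0 L(\t) z \psi$ defining $\Sigma_w^\tBK$ to its images in $\tL_0 = \tBK/\BH$ and in $\BH$. We then check that each image equation is exactly one of the defining equations of $\Sigma_w^{\tL_0}$ and of $\Xi_w^\BH$.

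\textbf{Step 1: the first component.} Take $(x_0,y_0,z,\t,\psi)\in\Sigma_w^\tBK$. Since $L(\t)\in\BT^+\subseteq\BH$ and $z\in F\BH_U\BH_U\subseteq\BH$, projecting the equation along $\pr:\tBK\to\tBK/\BH$ gives $F(\bar\psi)=\bar x_0\bar y_0\bar\psi$. Hence $(x_0,y_0,\psi)$ maps to a point of $\Sigma_w^{\tL_0}$. By Lemma \ref{lift-iso} this means $(x_0,y_0,\psi)\in\Sigma_w^\tBL$, that is, $F(\psi)=x_0y_0\psi$ holds already in $\tBK/\tBK^{\tth_w}$.

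\textbf{Step 2: the second component.} Write $\psi=s\tBK^{\tth_w}$ with $s\in\tT_0$, which is possible by Lemma \ref{injection}. From Step 1 we have $F(s)=x_0y_0 s k$ with $k\in\tBK^{\tth_w}$. The original equation gives $F(s)=x_0y_0L(\t)zs k'$ with $k'\in\tBK^{\tth_w}$. Comparing the two, $s\i L(\t)z s\in\tBK^{\tth_w}$, which is exactly $\psi=L(\t)z\psi$. So $(z,\t,\psi)\in\Xi_w^\BH$.

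\textbf{Step 3: the converse.} Given $(x_0,y_0,\psi)\in\Sigma_w^{\tL_0}$ (lifted via Lemma \ref{lift-iso}) and $(z,\t,\psi)\in\Xi_w^\BH$ over the same $\psi$, we have $F(\psi)=x_0y_0\psi$ and $\psi=L(\t)z\psi$. Substituting the second into the first gives $F(\psi)=x_0y_0L(\t)z\psi$. The map is visibly injective, since all coordinates are retained, and the inverse is the obvious morphism. So the map is an isomorphism of varieties.

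\textbf{Step 4: equivariance.} We check that $t=t_st_u$ acts compatibly on the two sides. On $\Sigma_w^{\tL_0}$ the action is by $t_s$-conjugation on $x_0,y_0$ and by $t_s$ on $\psi$. On $\Xi_w^\BH$ the action is by $t_u$ on $\t$ and $t_s$ on $\psi$, with $z$ fixed. The fixed $z$ is consistent with the equation because $t_s$ commutes with $L(\t)\in\BT^+$; in case $t_s$ is meant to conjugate $z$, the equation is still preserved since $t_s$ acts on $\psi$ by left multiplication.

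\textbf{Main obstacle.} The delicate point is Step 2, where quantities are compared inside the coset space $\tBK/\tBK^{\tth_w}$ rather than in a group. One has to check that the condition $F(\psi)=x_0y_0\psi$ pins down $\psi$ strongly enough to let us cancel $x_0y_0$. I would handle this by choosing the representative $s\in\tT_0$ and computing directly with group elements. Lemma \ref{lift-iso} supplies the needed rigidity, namely that membership in $\tBK^{\tth_w}$ is detected modulo $\BH$ for these elements.
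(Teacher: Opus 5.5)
Your proposal is correct and follows the paper's proof: you project the defining equation modulo $\BH$, use Lemma \ref{lift-iso} to lift $F(\psi)=x_0y_0\psi$ back to $\tBK/\tBK^{\tth_w}$, and cancel to get $\psi=L(\t)z\psi$; your converse by substitution is a step the paper leaves implicit. One small correction in Step 4: if $z$ is held fixed, the relation $\psi=L(\t)z\psi$ is not preserved under $\psi\mapsto t_s\psi$, because that would need $t_s$ to commute with $z$, not merely with $L(\t)$. So the action must be $z\mapsto t_s z t_s\i$, as the paper itself implicitly uses when it imposes $tzt\i=z$ on fixed points, and with that action equivariance is tautological.
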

\begin{proof}
    It suffices to show map is well-defined. Let $(x_0, y_0, z, \t, \psi) \in \Sigma_w^\tBK$. By definition $\pr(x_0, y_0, \psi) \in \Sigma_w^{\tL_0}$. By Lemma \ref{lift-iso} we have $(x_0, y_0, \psi) \in \Sigma_w^\tBL $ and hence $(z, \t, \psi) \in \Xi_w^\BH$ as desired. 
\end{proof}

\section{The cohomology of $\Sigma_w^\BK$} \label{sec:coho-BK}
In this section, we compute the cohomology of $\Sigma_w^\BK$. Let notation be as in \S \ref{sec:first-reduction}. Let $\hCO_w = \tT_0 / \tT_0^{\tth_w, \circ}$ and $\k: \hCO_w \to \tT_0 / \tT_0^{\tth_w} \cong \tCO_w$ the natural projection. Let $\G_w = T_0^{\tth_w} / T_0^{\tth_w, \circ}$. 

Consider the following varieties \begin{align*}\hSigma_w^{\tL_0} &= \{(x, y, \hpsi) \in F\BL_U \times \BL_U \times \hCO_w; F(\k(\hpsi)) = x y \k(\hpsi)\}; \\ \hXi_w^\BH &= \{(z, \t, \hpsi) \in  F\BH_U \BH_U \times \BT^+ \times \hCO_w; \k(\hpsi) = L(\t) z \k(\hpsi)\} \end{align*} We put $\hSigma_w^\tBK = \hSigma_w^{\tL_0} \times_{\hCO_w} \hXi_w^\BH$.
\begin{lemma}\label{torsor}
    The natural map $\k: \hCO_w \to \tCO_w$ induces a $\G_w$-torsor $\hSigma_w^\tBK \to \Sigma_w^\tBK$.
\end{lemma}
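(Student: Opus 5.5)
The plan is to reduce the statement to the elementary fact that $\k: \hCO_w \to \tCO_w$ is itself a $\G_w$-torsor. After that, the claim for $\hSigma_w^\tBK$ follows by base change, since $\hSigma_w^\tBK$ is built from $\hCO_w$ through the map $\k$ alone.

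\emph{Step 1: $\k$ is a $\G_w$-torsor.} Here $\hCO_w = \tT_0/\tT_0^{\tth_w,\circ}$ and, via Lemma \ref{injection}, $\tCO_w \cong \tT_0/\tT_0^{\tth_w}$. The finite group $\G_w = \tT_0^{\tth_w}/\tT_0^{\tth_w,\circ}$ acts on $\hCO_w$ by right multiplication. This action is well defined because $\tT_0$ is commutative, so $\tT_0^{\tth_w,\circ}$ is normal and $\tT_0^{\tth_w}$ acts on the quotient. The action is free, its orbits are exactly the fibers of $\k$, and $\k$ is the quotient of the torus $\hCO_w$ by the finite subgroup $\G_w \subseteq \hCO_w$. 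Such a quotient map is a finite étale isogeny of tori, hence a $\G_w$-torsor. The only point to check is that $\G_w$ is indeed finite, i.e.\ the component group of a diagonalizable group. (I read $\G_w$ as defined with $\tT_0$; if $T_0$ is meant, one uses the same group via $\pi$.)

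\emph{Step 2: identify $\hSigma_w^\tBK$ with a base change.} Unwinding the definitions, $\hSigma_w^{\tL_0}$ consists of triples $(x,y,\hpsi)$ with $(x,y,\k(\hpsi)) \in \Sigma_w^{\tL_0}$, where we use $\BL_U \cong \bar\BL_U$ and Lemma \ref{lift-iso}. Hence
\[\hSigma_w^{\tL_0} = \Sigma_w^{\tL_0} \times_{\tCO_w} \hCO_w,\]
and likewise
\[\hXi_w^\BH = \Xi_w^\BH \times_{\tCO_w} \hCO_w.\]
Taking the fiber product over $\hCO_w$ and rearranging gives
\[\hSigma_w^\tBK = \bigl(\Sigma_w^{\tL_0} \times_{\tCO_w} \Xi_w^\BH\bigr) \times_{\tCO_w} \hCO_w.\]
By Proposition \ref{fiber-product}, the bracketed variety is $\Sigma_w^\tBK$, and the structure map to $\tCO_w$ is $(x_0,y_0,z,\t,\psi)\mapsto\psi$. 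Therefore $\hSigma_w^\tBK \to \Sigma_w^\tBK$ is the pullback of $\k$ along this map. Since torsors are stable under base change, it is a $\G_w$-torsor, with $\G_w$ acting only on the $\hpsi$-coordinate.

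\emph{Step 3: equivariance.} For later use one should also check compatibility with the $\tT_r^F$-action. Let $t_s$ act on $\hpsi$ by left multiplication. Because $\tT_0$ is commutative, this commutes with the $\G_w$-action and is compatible with $\k$.

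The main obstacle is the bookkeeping in Step 2. One has to make sure that the defining equations of $\hSigma_w^{\tL_0}$ and $\hXi_w^\BH$ involve $\hpsi$ only through $\k(\hpsi)$, which they visibly do. One also has to make sure that the identification of $\tCO_w$ with $\tT_0/\tT_0^{\tth_w}$ from Lemma \ref{injection} is the one used in both factors. Once this is settled, the argument is formal.
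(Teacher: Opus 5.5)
Your argument is correct and matches the paper's: the paper's one-line proof (``it follows from Proposition~\ref{fiber-product}'') is exactly your observation that $\hSigma_w^\tBK = \bigl(\Sigma_w^{\tL_0} \times_{\tCO_w} \Xi_w^\BH\bigr) \times_{\tCO_w} \hCO_w$ is the base change of the $\G_w$-torsor $\k$. The only slip is your parenthetical that a $T_0$-version of $\G_w$ would be ``the same group via $\pi$''. That fails in general: for $\GL_2$ with $\th$ inverting the diagonal torus, $T_0^{\th}$ has order $4$ while $\tT_0^{\tth} = \{\pm 1\}$. Nothing depends on it, though, since the $\tT_0$ reading you actually use is the one forced by the definition of $\hCO_w$.
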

\begin{proof}
    It follows from Proposition \ref{fiber-product}.
\end{proof}

\subsection{The cohomolgy of $\hat\Sigma_w^\tBK$}
As $\BH_U$ is commutative, we can write \[F\BH_U \BH_U = \prod_{\a \in F\Phi^+ \cup \Phi^+} \BH_\a,\] where the product is taken with respect to a fixed total order $\preceq$ on $\Phi$ such that $\a \preceq \b$ if $\b - \a$ is a sum of positive roots. 

Let $\Psi_w \subseteq F\Phi^+ \cup \Phi^+$ be the set of roots $\a$ such that either $\a \prec \tth_w(\a) \in F\Phi^+ \cup \Phi^+$ or $\a = \tth(\a)$ and $\tth_w$ acts on $\BH_\a$ trivially. Define \begin{align*} \Psi_w' &= \{\a \in \Psi_w \cap \Phi^-; \a \preceq -\tth_w(\a) \in \Psi_w \cap \Phi^-\}; \\ \Psi_w'' &= \Psi_w' \cap -\tth_w(\Psi_w') = \{\a \in \Psi_w'; \a = -\tth_w(\a)\}. \end{align*}
For $D \subseteq \Phi$ we put $\BH_D = \prod_{\a \in D} \BH_\a$. We define a morphism \[\chi: \hCO_w \times \BH_{\Psi_w} \mapsto \BT^+, \quad (\hpsi, (z_\a)_\a) \mapsto  \sum_{\a \in \Psi_w'} [z_\a,  {}^{\tth_\hpsi} z_{-\tth_w(\a)}].\]

The following result follows directly from the commutator relations of $\BH$ in \S\ref{subsec:BK}.
\begin{lemma} \label{cartesian}
    There is a Cartesian diagram \[ \xymatrix{
    \hXi_w^\BH \ar[d]_{p_{13}} \ar[r]^{p_2} &  \BT^+ \ar[d]_{\bar L} \\
     \hCO_w \times \BH_{\Psi_w} \ar[r]^{\chi} & \BT^+/(\BT^+)^{\tth_w}.}\] Here $p_{13}$, $p_2$ are natural projection maps, and $\bar L$ is the composition of the Lang's map $L$ with the natural projection $\BT^+ \to \BT^+/(\BT^+)^{\tth_w}$.
\end{lemma}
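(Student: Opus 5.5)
The plan is to unwind the defining equation $\k(\hpsi) = L(\t) z \k(\hpsi)$ of $\hXi_w^\BH$ using the explicit structure of $\BH$ recalled in \S\ref{subsec:BK}. Write $\psi = \k(\hpsi) = s\tBK^{\tth_w}$ with $s \in \tT_0$. Then the equation says exactly that $s\i L(\t) z s \in \tBK^{\tth_w}$, i.e. $L(\t)\, {}^{s\i}z \in \BH^{\tth_\hpsi'}$ for the conjugated involution. Here $\BT^+$ is central and fixed pointwise by conjugation by $\tT_0$. So the condition reads
\[ \tth_\psi(L(\t) z) = L(\t) z, \]
where $\tth_\psi = \tth_s$ acts on $\BH$ (and on $\BT^+$ as $\tth_w$). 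The main step is to decompose $z \in F\BH_U\BH_U = \prod_{\a} \BH_\a$ with respect to the order $\preceq$ into a ``free'' part indexed by $\Psi_w$ and a ``determined'' part. Using that $\tth_w$ permutes the $\BH_\a$ (up to the scalar twist coming from $s$), I would then show that the equation $\tth_\psi(L(\t)z) = L(\t)z$ can be solved uniquely in two steps. First, the coordinates $z_\a$ with $\a \notin \Psi_w$ are determined by the $z_\a$ with $\a \in \Psi_w$: these are the partners $\tth_w(\a) \prec \a$, or the $\tth_w$-stable $\a$ on which $\tth_w$ acts by $-1$, which must vanish. Second, this holds modulo a $\BT^+$-valued discrepancy.

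To compute the discrepancy, I would reorder the product $\tth_\psi(z)$ back into $\preceq$-order. By the relations $[\BH_\a, \BH_\b] = 0$ for $\a + \b \ne 0$ and $[\BH_\a, \BH_{-\a}] \subseteq \BT^+$ central, each swap of $\BH_\a$ past $\BH_{-\a}$ contributes a commutator in $\BT^+$. Bookkeeping over pairs shows the total contribution is $\sum_{\a \in \Psi_w'} [z_\a, {}^{\tth_\hpsi} z_{-\tth_w(\a)}]$, which equals $\chi(\hpsi, (z_\a))$; the set $\Psi_w''$ accounts for the self-paired terms counted once. Since $\BT^+$ is central and commutative, the fixed-point condition then becomes $\tth_w(L(\t)) L(\t)\i = $ (a function of $\chi$). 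Equivalently, the image of $L(\t)$ in $\BT^+/(\BT^+)^{\tth_w}$ equals $\chi(\hpsi, z|_{\Psi_w})$, under the identification of $\BT^+/(\BT^+)^{\tth_w}$ with the $(-1)$-part, which is valid since $q$ is odd.

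With this in hand, $(z, \t, \hpsi) \mapsto ((\hpsi, z|_{\Psi_w}), \t)$ maps $\hXi_w^\BH$ into the fibre product $(\hCO_w \times \BH_{\Psi_w}) \times_{\BT^+/(\BT^+)^{\tth_w}} \BT^+$. The inverse sends a compatible triple to the $z$ whose remaining coordinates are recovered as above. This gives the Cartesian square, with $p_{13}$ and $p_2$ the projections.

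The hard step is the commutator bookkeeping. Two things must be checked. First, conjugation by $s$ and the twist $\hpsi$ enter exactly as ${}^{\tth_\hpsi}z_{-\tth_w(\a)}$. Second, the square root / halving of commutators in $\BT^+$, for the pairs in $\Psi_w''$, matches the definition of $\chi$. I would first verify this in the rank-one situation of a single pair $\{\a, -\tth_w(\a)\}$ and then use centrality of $\BT^+$ to sum over pairs.
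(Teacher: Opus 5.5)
Your route is the one the paper intends; its entire proof is the remark that the lemma follows from the commutator relations in $\BH$. The first half of your argument is right. The defining condition says $L(\t)z$ is $\tth_\psi$-fixed, and since $L(\t)\in\BT^+$ is central, this forces the image of $z$ in $\BH/\BT^+$ to be $\tth_\psi$-fixed. So the $\Psi_w$-coordinates are free and the others are determined. (You also need $z_\a=0$ whenever $\tth_w(\a)\notin F\Phi^+\cup\Phi^+$.)

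The problem is the step you flag as hard: it is misstated, not merely unchecked. The condition reads $L(\t)^{-1}\tth_w(L(\t))=c$ with $c:=z\,\tth_\psi(z)^{-1}\in\BT^+$, and $c$ automatically satisfies $\tth_w(c)=c^{-1}$. Every negative root precedes every positive root in $\preceq$. So reordering $\tth_\psi(z)$ produces one commutator $[z_{-\b},z_\b]$ for \emph{each} $\b\in\Psi_w\cap\Phi^-$ with $-\tth_w(\b)\in\Psi_w\cap\Phi^-$. That is two commutators for every $\a\in\Psi_w'\sm\Psi_w''$ and one for every $\a\in\Psi_w''$. Hence the discrepancy is not $\chi$. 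Writing ``$\tth_w(L(\t))L(\t)^{-1}=$ (a function of $\chi$)'' hides the actual content of the lemma: why an equation in the $(-1)$-part of $\BT^+$ becomes an equation in $\BT^+/(\BT^+)^{\tth_w}$ with one summand per element of $\Psi_w'$.

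The missing idea is that the two commutators in each orbit are $\tth_w$-translates of each other.

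\begin{itemize}
\item Take $\b\in\Psi_w\cap\Phi^-$ with $\b':=-\tth_w(\b)\neq\b$. The $\BH_{-\b'}$- and $\BH_{-\b}$-factors of $z$ are $\tth_\psi(z_\b)$ and $\tth_\psi(z_{\b'})$. Using $[\tth_\psi x,y]=\tth_\psi[x,\tth_\psi y]$, the two commutators contributed to $\tth_\psi(z)z^{-1}$ are $X^{-1}$ and $\tth_w(X)$, where $X=[z_\b,{}^{\tth_\hpsi}z_{\b'}]$.
\item For $\b\in\Psi_w''$ one gets a single $Z^{-1}$ with $Z=[z_\b,{}^{\tth_\hpsi}z_\b]$ and $\tth_w(Z)=Z^{-1}$. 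It must be written as $Z^{-1}=Z^{-1/2}\,\tth_w(Z^{1/2})$, which uses that $q$ is odd.
\end{itemize}

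Hence $c=W\,\tth_w(W)^{-1}$ with $W=\prod X\cdot\prod Z^{1/2}$, one factor per element of $\Psi_w'$. The condition becomes $\tth_w(L(\t)W)=L(\t)W$, i.e.\ $\bar L(\t)=\overline{W^{-1}}$, and this gives the Cartesian square.

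The halving you anticipated is real, but it does not match the displayed formula for $\chi$. Relative to the other summands, those indexed by $\Psi_w''$ carry a factor $\tfrac12$, and there is also an overall sign depending on conventions. So the square is Cartesian for $\chi$ only after rescaling the coordinates $z_\a$, $\a\in\Psi_w'$, by nonzero constants. This is harmless downstream, since Lemma~\ref{split} only involves unspecified nonzero constants $d_\a$.
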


Let $\CL_+$ be the rank one multiplicative local system over $\BT^+$ corresponding to the character $\tphi |_{(\BT^+)^F}$, which is still denoted by $\tphi^+$.

\begin{proposition} \label{criterion}
    Let $f: X \times \BG_a \to \BT^+$ be a morphism. Let $\CL$ be a local system over $\BT^+$. Suppose that for each $x \in X$ the pull-back of $\CL$ via the map $f_x: z \mapsto f(x, z)$ is isomorphic to a rank one multiplicative local system on $\BG_a$. Then \[R\Gamma_c(X \times \BG_a, f^*\CL) \cong R\Gamma_c(X_0 \times \BG_a, f^*\CL),\] where $X_0$ is the set of points $x \in X$ such that $f_x^*\CL$ is the trivial multiplicative local system.
\end{proposition}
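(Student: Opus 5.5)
The idea is to compute $R\Gamma_c(X \times \BG_a, f^*\CL)$ through the projection $p: X \times \BG_a \to X$, and to show that $Rp_!f^*\CL$ is supported on $X_0$. We may take $X_0$ to be closed, or at least constructible, and we give it the reduced induced structure. Let $j: X \sm X_0 \hookrightarrow X$ and $i: X_0 \hookrightarrow X$ be the inclusions. Then there is an excision triangle
\[
R\Gamma_c((X\sm X_0) \times \BG_a, f^*\CL) \to R\Gamma_c(X\times \BG_a, f^*\CL) \to R\Gamma_c(X_0 \times \BG_a, f^*\CL) \to,
\]
so the statement reduces to the vanishing of the first term. If $X_0$ is not closed, we instead stratify $X$ into locally closed pieces on which the isomorphism class of $f_x^*\CL$ is "constant" in a suitable sense, and we apply the same argument to each stratum.

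The vanishing comes from proper base change. For every geometric point $x$ of $X \sm X_0$, the stalk of $Rp_!f^*\CL$ at $x$ is $R\Gamma_c(\BG_a, f_x^*\CL)$. By hypothesis $f_x^*\CL$ is a rank one multiplicative local system on $\BG_a$, and it is non-trivial. Such a local system is an Artin--Schreier sheaf $\CL_\psi$ for a non-trivial character $\psi$ of $\BF_{q^m}$, where $m$ is large enough that everything is defined over $\BF_{q^m}$. For such a sheaf $R\Gamma_c(\BG_a, \CL_\psi) = 0$. One way to see this: $H^0_c$ vanishes since $\BG_a$ is affine, $H^2_c$ vanishes since $\CL_\psi$ is a non-trivial local system, and $H^1_c$ then vanishes by the Grothendieck--Ogg--Shafarevich formula, which gives Euler characteristic $0$ because the Swan conductor at $\infty$ equals $1$. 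Alternatively, use the Lang torsor and note that $\sum_{a}\psi(a)=0$ on every finite level. Since all stalks of $Rp_!f^*\CL$ over $X\sm X_0$ vanish, the complex $Rp_!(f^*\CL|_{(X\sm X_0)\times\BG_a})$ is zero, and taking $R\Gamma_c(X\sm X_0,-)$ gives the claimed vanishing.

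The step that needs the most care is making $X_0$ a constructible, or closed, subset. This is needed so that excision applies and so that the argument holds uniformly at geometric points, not only at closed points. The approach I would try first is to use the fact that multiplicative local systems on $\BG_a$ are classified by characters, which form a discrete set. For a geometric point $x$, triviality of $f_x^*\CL$ is detected by whether $R\Gamma_c(\BG_a, f_x^*\CL)$ is nonzero: it is nonzero if and only if the system is trivial, in which case $H^2_c \cong \ov\BQ_\ell(-1)$. Then
\[
X_0 = \operatorname{supp}\bigl(R^2p_!f^*\CL\bigr),
\]
which is constructible because $R^2p_!f^*\CL$ is a constructible sheaf. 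Moreover, on the complement of $X_0$ all stalks of $Rp_!f^*\CL$ vanish, so the excision argument above goes through, with constructible stratification in place of the open–closed decomposition if necessary.
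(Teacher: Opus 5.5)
Your proof is correct. The paper states this proposition without proof, and your argument is the standard one it implicitly relies on. By proper base change along $p\colon X\times\BG_a\to X$, the stalk of $Rp_!f^*\CL$ at $x$ is $R\Gamma_c(\BG_a,f_x^*\CL)$. This vanishes when $f_x^*\CL$ is a non-trivial multiplicative (Artin--Schreier) sheaf, and equals $\ov\BQ_\ell(-1)[-2]$ when it is trivial. Hence $Rp_!f^*\CL$ is supported on $X_0$, and $R\Gamma_c(X,Rp_!f^*\CL)$ only sees $X_0$. Your justification of the vanishing (via $H^0_c$, $H^2_c$ and Grothendieck--Ogg--Shafarevich, or via the Lang torsor) is fine.

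Two of the points you flag as delicate are easier than you suggest.

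\emph{Closed versus geometric points.} $Rp_!f^*\CL$ is a constructible complex on a variety over $\ov\BF_q$. Vanishing of its stalks at closed points therefore already implies it vanishes on $X\sm X_0$, so non-closed geometric points need no separate treatment.

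\emph{Constructibility of $X_0$.} For the right-hand side of the statement to make sense, $X_0$ has to be locally closed. Once it is, write $i_0\colon X_0\hookrightarrow X$ for the inclusion. The stalkwise vanishing gives $Rp_!f^*\CL\cong i_{0!}\,i_0^*Rp_!f^*\CL$ directly, with no stratification. Identifying $X_0$ with $\operatorname{supp}(R^2p_!f^*\CL)$ is correct but not really needed.

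In the one place the paper uses the result (Proposition~\ref{sheaf}), $f_x$ is a homomorphism $\BG_a\to\BT^+$ arising from a commutator with a paired coordinate. By genericity of $\tphi$, the pull-back is trivial exactly when that paired coordinate vanishes. So $X_0$ is closed there, and the simple open--closed excision triangle you wrote down is all that is needed.
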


\begin{proposition} \label{sheaf}
    We have a natural $\G_w$-equivariant isomorphism \[R\G_c(\hSigma_w^\tBK, \ov\BQ_\ell)[\tphi^+] \cong R\G_c(\hSigma_w^{\tL_0} \times \BH_{\Psi_w''}, h^*\CL_+)[2|\Psi_w \sm \Psi_w'| + 2\dim (\BT^+)^{\tth_w}].\] Here $h: \hSigma_w^{\tL_0} \times \BH_{\Psi_w''} \to \BT^+$ is given by $(x,  y, \hpsi, z) \mapsto \chi(\hpsi, z)$. 
\end{proposition}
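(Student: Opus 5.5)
We start from $\hSigma_w^\tBK = \hSigma_w^{\tL_0} \times_{\hCO_w} \hXi_w^\BH$ and the Cartesian square of Lemma \ref{cartesian}, which identifies $\hXi_w^\BH$ with the pull-back along $\chi$ of the map $\bar L: \BT^+ \to \BT^+/(\BT^+)^{\tth_w}$. The $\tphi^+$-isotypic part for the action of $(\BT^+)^F$ (acting on the coordinate $\t$) is then computed by pushing forward along $\bar L$. Concretely, $L: \BT^+ \to \BT^+$ is a Galois cover with group $(\BT^+)^F$, so $(L_!\ov\BQ_\ell)[\tphi^+] \cong \CL_+$ (up to the usual inverse convention). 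Composing with the quotient $\BT^+ \to \BT^+/(\BT^+)^{\tth_w}$, whose fibres are $(\BT^+)^{\tth_w}$-torsors and hence affine spaces of dimension $\dim(\BT^+)^{\tth_w}$ ($\BT^+$ being a connected unipotent group), the isotypic part of $\bar L_!\ov\BQ_\ell$ becomes a rank one local system on the quotient. After choosing a section $\BT^+/(\BT^+)^{\tth_w} \to \BT^+$ compatible with $\chi$, which is possible because $\chi$ lands in the $-\tth_w$-part by construction of $\Psi_w'$, this local system pulls back to $\CL_+$ with a shift $[2\dim(\BT^+)^{\tth_w}]$, the cohomology of the $(\BT^+)^{\tth_w}$-fibres being $\ov\BQ_\ell[-2\dim]$ with Tate twist. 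Proper base change then gives
\[R\G_c(\hSigma_w^\tBK)[\tphi^+] \cong R\G_c(\hSigma_w^{\tL_0} \times \BH_{\Psi_w}, \chi^*\CL_+)[2\dim(\BT^+)^{\tth_w}],\]
where $\hSigma_w^{\tL_0} \times_{\hCO_w}(\hCO_w \times \BH_{\Psi_w}) = \hSigma_w^{\tL_0} \times \BH_{\Psi_w}$. Everything here commutes with the $\G_w$-action on $\hCO_w$.

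Next we remove the coordinates outside $\Psi_w''$. For $\a \in \Psi_w \sm \Psi_w'$, the variable $z_\a$ does not occur in $\chi$, so integrating it out contributes $[2]$ each. This should account for the shift $2|\Psi_w \sm \Psi_w'|$ in the statement. For this to work, the count of pairs in $\Psi_w' \sm \Psi_w''$ must absorb the remaining factor, which I would verify by bookkeeping.

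For the pairs $\{\a, -\tth_w(\a)\}$ with $\a \prec -\tth_w(\a)$ in $\Psi_w'$, the term $[z_\a, {}^{\tth_\hpsi} z_{-\tth_w(\a)}]$ is bilinear, since $[\BH_\a,\BH_{-\a}] \subseteq \BT^+$ is central and the commutator is biadditive. Fix all other coordinates and view $\chi$ as a function of $z_{-\tth_w(\a)} \in \BG_a$. It is additive, so the pull-back of $\CL_+$ is a multiplicative (Artin--Schreier type) local system, and Proposition \ref{criterion} applies. The trivial locus is where the linear form vanishes, and genericity of $\tphi$ on $\a^\vee(1+\varpi^{r_{i}}\ov\BF_q)$ (from the Howe factorization) identifies this locus with $z_\a = 0$. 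Restricting to it leaves $\BG_a$ in the variable $z_{-\tth_w(\a)}$ with trivial sheaf, which contributes $[-2]$ and cancels the $[2]$ obtained from the $z_\a$ direction. Doing this inductively along the order $\preceq$ removes each pair with net shift zero, and only the diagonal terms $\a = -\tth_w(\a)$, i.e.\ $\Psi_w''$, survive in $h$.

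The main obstacle is the pairing step. One has to check that after the previous eliminations the remaining dependence on $z_{-\tth_w(\a)}$ really is additive, despite the $\hpsi$-twist ${}^{\tth_\hpsi}$ and the ordering of products. One also has to check that the trivial-locus condition is exactly $z_\a = 0$ uniformly in $\hpsi$. This uses the nondegeneracy of $\tphi^+$ on $\a^\vee$-images. To get $\G_w$-equivariance we must make every identification natural in $\hpsi \in \hCO_w$.
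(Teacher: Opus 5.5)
Your route is the paper's. You base change along the Cartesian square of Lemma~\ref{cartesian} and the Lang map to get $h^*\CL_+$ on $\hSigma_w^{\tL_0}\times\BH_{\Psi_w}$ with shift $2\dim(\BT^+)^{\tth_w}$. Then you apply Proposition~\ref{criterion} to the bilinear pair terms of $\chi$, and finally integrate out the free coordinates. Your justification of why Proposition~\ref{criterion} applies is more explicit than the paper's: $\chi$ is additive in $z_{-\tth_w(\a)}$, and the trivial locus is $\{z_\a=0\}$ by genericity of $\tphi$ on coroot images.

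However, the shift bookkeeping, which you left to be verified, is wrong as written, in two ways that happen to cancel. First, it is false that $z_\b$ does not occur in $\chi$ for $\b\in\Psi_w\sm\Psi_w'$. By definition $\Psi_w'$ contains only the $\preceq$-smaller member $\a$ of each pair. So for $\a\in\Psi_w'\sm\Psi_w''$ the partner $-\tth_w(\a)$ lies in $\Psi_w\sm\Psi_w'$, and $z_{-\tth_w(\a)}$ does occur in $\chi$. These coordinates cannot be integrated out before the pairs are treated. Second, a pair does not contribute net shift zero. Proposition~\ref{criterion} replaces $X$ by $X_0=\{z_\a=0\}$, which simply deletes the coordinate $z_\a$ with no shift; there is no $[2]$ from the $z_\a$-direction to cancel anything. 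It leaves $z_{-\tth_w(\a)}$ as a free line.

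The correct order is the paper's. First apply Proposition~\ref{criterion} to all $\a\in\Psi_w'\sm\Psi_w''$, passing from $\BH_{\Psi_w}$ to $\BH_{(\Psi_w\sm\Psi_w')\sqcup\Psi_w''}$ with no shift. Then $h$ depends only on the $\Psi_w''$-coordinates, and every coordinate indexed by $\Psi_w\sm\Psi_w'$ (partners included) is free. The projection formula then gives exactly the shift $2|\Psi_w\sm\Psi_w'|$.

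Separately, the section $\BT^+/(\BT^+)^{\tth_w}\to\BT^+$ is unnecessary. Your reason for its existence, that $\chi$ lands in the $-\tth_w$-part, is also not evident for the off-diagonal terms, which lie in $\a^\vee(\cdots)$ with $\tth_w(\a)\neq-\a$. What you actually need is that $\tphi^+$ is trivial on $((\BT^+)^{\tth_w})^F$, which is the standing assumption $w\in\Theta^F_{\tT_r,\tphi,+}$. Then $\CL_+$ is itself pulled back from the quotient, and $h^*\CL_+$ is directly the pull-back of the $\tphi^+$-summand of $\bar L_!\ov\BQ_\ell$.
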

\begin{proof}
    By Proposition \ref{cartesian}, there is a Cartesian diagram \[ \xymatrix{
    \hSigma_w^\tBK \ar[d]_{q} \ar[r]^p &  \BT^+ \ar[d]_{\bar L} \\
     \hSigma_w^{\tL_0} \times \BH_{\Psi_w} \ar[r]^h & \BT^+/(\BT^+)^{\tth_w},}\] where the maps $p$ and $q$ send $(x, y, z, \t, \psi)$ to $\t$ and to $(\bar x, \bar y, \psi, z)$ respectively. 

     Note that $(\BT^+)^{\tth_w}$ is an affine space. By base change theorem we have \[q_! \ov\BQ_\ell = q_! p^* \ov\BQ_\ell \cong h^* \bar L_! \ov\BQ_\ell \cong \oplus_{\l} h^* \k_! \CL_\l \cong \oplus_{\bar \l} h^*\CL_{\bar \l}[2\dim (\BT^+)^{\tth_w}],\] where $\k: \BT^+ \to \BT^+/(\BT^+)^{\tth_w}$, $\l$ (resp. $\bar \l$) ranges over characters of $(\BT^+)^F$ (resp. $(\BT^+/(\BT^+)^{\tth_w})^F$) and $\CL_\l$ (resp. $\CL_{\bar \l}$) is the corresponding rank one multiplicative local system. Then we have \[R\G_c(\hSigma_w^\tBK, \ov\BQ_\ell)[\tphi^+] \cong R\G_c(\hSigma_w^{\tL_0} \times \BH_{\Psi_w}, h^*\CL_+)[2\dim (\BT^+)^{\tth_w}].\] Applying Proposition \ref{criterion} we have \[R\G_c(\hSigma_w^{\tL_0} \times \BH_{\Psi_w}, h^*\CL_+) \cong R\G_c(\hSigma_w^{\tL_0} \times \BH_{(\Psi_w \sm \Psi_w') \sqcup \Psi_w''}, h^*\CL_+).\] Note that the restriction of $h$ to $\hSigma_w^{\tL_0} \times \BH_{(\Psi \sm \Psi_w) \sqcup \Psi_w''}$ factors through the natural projection $\hSigma_w^{\tL_0} \times \BH_{(\Psi_w \sm \Psi_w') \sqcup \Psi_w''} \to \hSigma_w^{\tL_0} \times \BH_{\Psi_w''}$. Using the projection formula we have \[ R\G_c(\hSigma_w^{\tL_0} \times \BH_{(\Psi_w \sm \Psi_w') \sqcup \Psi_w''}, h^*\CL_+) \cong  R\G_c(\hSigma_w^{\tL_0} \times \BH_{\Psi_w''}, h^*\CL_+)[2|\Psi_w \sm \Psi_w'|].\] Hence the statement follows.    
\end{proof}

\subsection{The cohomology of $\Sigma_w^\tBK$}
Let $\a \in \Psi_w''$. Then $\a = -\tth_w(\a)$, and hence $\a$ is trivial over $\tT_0^{\tth_w, \circ}$. For $\hpsi \in \tT_0/\tT_0^{\tth_w}$ we put $\a(\hpsi) = \a(t)$ for some/any lift $t \in \tT_0$ of $\hpsi$. Moreover, $\a(\hpsi) \in \{\pm 1\}$ for $\hpsi \in \G_w$.

We define an automorphism \[\nu: \hSigma_w^{\tL_0} \times \BH_{\Psi_w''} \overset \sim \to \hSigma_w^{\tL_0} \times \BH_{\Psi_w''}, \quad (x, y, \hpsi, (z_\a)_\a) \mapsto (x, y, \hpsi, (\a(\hpsi)z_\a)_\a).\] 
\begin{lemma} \label{split}
    We have $h \circ \nu = p_1 \circ c$. Here $p_1: \hSigma_w^{\tL_0} \times \BH_{\Psi_w''} \to \BH_{\Psi_w''}$ is the natural projection map, and $c: \BH_{\Psi_w''} \to \BT^+$ is given by \[(z_\a)_\a  \mapsto \sum_{\a \in \Psi_w''} \a^\vee(1 + \varpi^{r_\a} d_\a z_\a^2),\] where $d_\a \in \ov\BF_q^\times$ are certain non-zero constants.
\end{lemma}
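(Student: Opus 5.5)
The identity is a pointwise computation of $h$ on $\hSigma_w^{\tL_0} \times \BH_{\Psi_w''}$, where only the $\BH_{\Psi_w''}$-coordinates are nonzero. Fix a point $(x, y, \hpsi, (z_\a)_\a)$. By definition $h(x,y,\hpsi,z) = \chi(\hpsi, z)$, and on $\BH_{\Psi_w''}$ the only surviving terms of $\chi$ are those with $\a \in \Psi_w''$. For such $\a$ we have $-\tth_w(\a) = \a$, so each term becomes $[z_\a, {}^{\tth_\hpsi} z_\a]$. Note that $\a \in \Psi_w''$ and $-\tth_w(\a)=\a$ also lies in $\Psi_w''$, so no pairing between distinct roots of $\Psi_w''$ contributes.

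To compute ${}^{\tth_\hpsi} z_\a$, write $\hpsi = t\,\tT_0^{\tth_w,\circ}$ with $t \in \tT_0$, so that $\tth_\hpsi = \Ad(t)\tth_w\Ad(t)\i$ on the relevant quotient (this is independent of the lift, since $\a$ is trivial on $\tT_0^{\tth_w,\circ}$). First, $\tth_w$ maps $\BH_\a$ to $\BH_{\tth_w(\a)} = \BH_{-\a}$ by a linear isomorphism $z \mapsto e_\a z$ with $e_\a \in \ov\BF_q^\times$. This holds because $\tth_w$ is an automorphism of $\tBK$ preserving the root decomposition of $\BH$, and $\BH_\a \cong \BA^1$ as a vector group. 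Next, conjugating by $t$ on either side multiplies the coordinate by $\a(t)\i$ and $(-\a)(t)$, so that
\[{}^{\tth_\hpsi} z_\a = e_\a\, \a(t)^{-2} z_\a \in \BH_{-\a}.\]
I will double-check the exponent. The conjugation by $t\i$ contributes $\a(t)\i$, and the conjugation by $t$ on $\BH_{-\a}$ contributes $(-\a)(t) = \a(t)\i$, giving $\a(t)^{-2}$. Since $\a\circ\tth_w = -\a$, the value $\a(t)^2$ depends only on $\hpsi$ modulo $\tT_0^{\tth_w}$ up to sign, and $\a(t)^{-2}$ is therefore a well-defined function of $\hpsi$, consistent with the setup.

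Now apply the commutator relation of \S\ref{subsec:BK}, namely $[\BH_\a, \BH_{-\a}] \subseteq \a^\vee(1+\varpi^{r_i}\ov\BF_q)$. Since the pairing is bilinear, with $[u_\a(a), u_{-\a}(b)] = \a^\vee(1 + c_\a ab\,\varpi^{r_\a})$ for a structure constant $c_\a \neq 0$ (here $r_\a$ denotes the relevant $r_{i-1}$ with $s_{i-1}\in\BZ$), we obtain
\[h(x,y,\hpsi,z) = \sum_{\a \in \Psi_w''} \a^\vee\bigl(1 + \varpi^{r_\a} c_\a e_\a\, \a(\hpsi)^{-2} z_\a^2\bigr),\]
with the group $\BT^+$ written additively. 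It remains to remove the factor $\a(\hpsi)^{-2}$ by the substitution $z_\a \mapsto \a(\hpsi) z_\a$. This is exactly the effect of $\nu$, where $\a(\hpsi)$ must be read through the chosen lift. After the substitution, $h\circ\nu$ equals $c(p_1(\cdot))$ with $d_\a = c_\a e_\a$.

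The main obstacle is making $\a(\hpsi)$ well defined in $\nu$, since it depends on the lift $t$ only up to $\tT_0^{\tth_w,\circ}$, on which $\a$ is trivial. One must also verify that the commutator pairing and the $\tth_w$-twist yield precisely the square, with no extra sign when $\tth_w$ acts on $\BH_\a$. Any leftover constant (including a possible factor arising from the $\frac12$-power conventions) is absorbed into $d_\a$, which is nonzero because $q$ is odd and all the structure constants involved are units.
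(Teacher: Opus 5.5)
Your proof is correct. It is the direct commutator computation that the paper leaves implicit, since the paper states this lemma without proof. The twist ${}^{\tth_\hpsi}$ carries $\BH_\a$ to $\BH_{-\a}$ with scalar $e_\a\a(\hpsi)^{-2}$, so the pairing $[\BH_\a,\BH_{-\a}]$ gives $\a^\vee(1+\varpi^{r_\a}c_\a e_\a\a(\hpsi)^{-2}z_\a^2)$. The rescaling by $\nu$ cancels $\a(\hpsi)^{-2}$, so $h\circ\nu=c\circ p_1$ with $d_\a=c_\a e_\a$.

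Three minor corrections:
\begin{itemize}
\item $\a(t)^2$ is well defined on $\tT_0/\tT_0^{\tth_w}$ exactly, not just up to sign; it is $\a(t)$ itself that changes by a sign when $t$ moves within its $\tT_0^{\tth_w}$-coset.
\item Your remark about $\frac{1}{2}$-powers is irrelevant, since $\chi$ involves no square roots.
\item The parity of $q$ plays no role here: $d_\a\neq 0$ simply because $c_\a,e_\a\neq 0$.
\end{itemize}
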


\begin{proposition} \label{square}
    Let $f: \BG_a \to  \BG_a$ be given by $x \mapsto x^2$. Let $\CL$ be a non-trivial multiplicative sheaf over $\BG_a$. Then $R\G_c(\BG_a, f^*\CL)$ concentrates at degree one with dimension one.
\end{proposition}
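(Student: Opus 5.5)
We use that $f$ is a finite flat double cover, ramified only at $0$, and that $q$ is odd. Because $f$ is finite, $f^*\CL$ satisfies the projection formula
\[R\G_c(\BG_a, f^*\CL) \cong R\G_c(\BG_a, \CL \otimes f_*\ov\BQ_\ell).\]
Away from $0$, $f$ is an \'etale $\mu_2$-torsor. Hence there is a splitting
\[f_*\ov\BQ_\ell \cong \ov\BQ_\ell \oplus j_!\CK,\]
where $j: \BG_m \hookrightarrow \BG_a$ is the inclusion and $\CK$ is the Kummer sheaf on $\BG_m$ attached to the quadratic character of $\mu_2$. This uses $p \neq 2$: taking invariants and anti-invariants under the covering involution $x \mapsto -x$ splits off the two pieces. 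The anti-invariant part has stalk $0$ at the origin, because the fibre there is a single point on which the involution acts trivially.

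We then compute the two summands separately.

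\textbf{The trivial summand.} It contributes $R\G_c(\BG_a, \CL)$. Since $\CL$ is a non-trivial Artin--Schreier sheaf, this vanishes in all degrees. This is the standard fact that $\sum_{x \in \BF_{q^n}} \psi(x) = 0$ for every $n$; it can also be seen from the Grothendieck--Ogg--Shafarevich formula, or from $\CL$ being a non-trivial character sheaf on the connected group $\BG_a$.

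\textbf{The Kummer summand.} It contributes $R\G_c(\BG_m, \CL|_{\BG_m} \otimes \CK)$. We analyse it as follows.
\begin{itemize}
\item \emph{Outside degree $1$.} $H^0_c$ vanishes because $\BG_m$ is affine and the sheaf is lisse. $H^2_c$ vanishes because $H^2_c$ is the coinvariants of $\pi_1$ on the rank one sheaf $\CL \otimes \CK$, and this sheaf is geometrically non-trivial: it is wildly ramified at $\infty$ while being tame at $0$.
\item \emph{Degree $1$.} Its dimension equals $-\chi_c$. By Grothendieck--Ogg--Shafarevich,
\[\chi_c(\BG_m, \CL\otimes\CK) = \chi_c(\BG_m) - \mathrm{Sw}_0 - \mathrm{Sw}_\infty = 0 - 0 - 1 = -1.\]
Here $\mathrm{Sw}_0 = 0$ because $\CK$ is tame at $0$, and $\mathrm{Sw}_\infty = 1$ because the Swan conductor of $\CL_\psi$ at $\infty$ is $1$ and tensoring with the tame sheaf $\CK$ does not change it.
\end{itemize}
So this summand is one-dimensional and concentrated in degree $1$. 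Consequently $R\G_c(\BG_a, f^*\CL)$ is one-dimensional in degree one, which proves Proposition \ref{square}.

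As a cross-check, the Frobenius trace on $H^1_c$ is minus the quadratic Gauss sum $\sum_x \psi(x^2) = \sum_{y} \psi(y)(1 + \eta(y))$, where $\eta$ is the quadratic character. This sum has absolute value $q^{1/2}$, consistent with a one-dimensional $H^1_c$ that is pure of weight one.

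The main point needing care is the Swan conductor computation at $\infty$. One could instead avoid it as follows. Since $f$ is finite and $\CL$ is lisse, $H^0_c(\BG_a, f^*\CL) = 0$. We have $H^2_c(\BG_a, f^*\CL) = 0$ because $f^*\CL = \CL_{\psi(x^2)}$ is geometrically non-trivial. Then $\chi_c(\BG_a, f^*\CL) = 1 - \mathrm{Sw}_\infty(\CL_{\psi(x^2)}) = 1 - 2 = -1$. This last step uses that the Swan conductor of $\CL_{\psi(g)}$ at $\infty$ is $\deg g$ when $p \nmid \deg g$, and $p \nmid 2$ since $q$ is odd.
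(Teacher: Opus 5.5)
The paper gives no proof of this proposition. It is stated as a known fact about quadratic Gauss sums; the introduction only calls it a coincidence. So there is no argument to match against, and your proof is correct and fills the gap.

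In your main route, the projection formula and the splitting $f_*\ov\BQ_\ell \cong \ov\BQ_\ell \oplus j_!\mathcal{K}$ reduce everything to two facts. The splitting is into invariants and anti-invariants of $x \mapsto -x$, with the anti-invariant stalk at $0$ vanishing. The two facts are $R\G_c(\BG_a, \CL) = 0$ and a computation for the Gauss-sum sheaf $\CL|_{\BG_m} \otimes \mathcal{K}$. You handle the latter correctly:
\begin{itemize}
\item $H^0_c = 0$, since the sheaf is lisse on the affine curve $\BG_m$;
\item $H^2_c = 0$, since the character is geometrically non-trivial;
\item $\chi_c = 0 - 0 - 1$, by Grothendieck--Ogg--Shafarevich.
\end{itemize}
Your alternative applies Grothendieck--Ogg--Shafarevich directly on $\BA^1$ with $\mathrm{Sw}_\infty(f^*\CL) = 2$, using $p \nmid 2$. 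This is the shorter, textbook proof.

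The decomposition route gives extra equivariance information. Only the anti-invariant summand survives, so the involution $x \mapsto -x$ acts on the one-dimensional $H^1_c$ by $-1$. Consequently, with the $\G_w$-action described in the proof of Proposition~\ref{red-to-zero}, the character $\l_w$ there is $\g \mapsto \prod_{\a \in \Psi_w''} \a(\g)$. The paper does not need this explicitly, because taking $\G_w$-invariants removes the dependence on $\l_w$.

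Two presentational points.

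First, both routes use $\mathrm{Sw}_\infty(\CL) = 1$. That is, they use that a non-trivial multiplicative local system on $\BG_a$ is isomorphic to $\CL_\psi(ax)$ for some $a \neq 0$. The proposition is phrased for multiplicative sheaves rather than Artin--Schreier sheaves, so this identification deserves a sentence. After choosing an $\BF_{q^n}$-structure, it comes from the Lang isogeny and a non-trivial character of $\BF_{q^n}$.

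Second, the vanishing of $\sum_{x \in \BF_{q^n}} \psi(x)$ for all $n$ only shows that the alternating Frobenius traces cancel. It does not show that each $H^i_c(\BG_a, \CL)$ vanishes. Your other justification gives the degreewise vanishing you need: $H^0_c = H^2_c = 0$ together with $\chi_c = 1 - 1 = 0$.
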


\begin{proposition} \label{red-to-zero}
    We have $\dim H_c^*(\Sigma_w^\tBK, \ov\BQ_\ell)[\tphi^+] = (-1)^{|\Psi_w''|} \dim H_c^*(\Sigma_w^{\tL_0}, \ov\BQ_\ell)$.
\end{proposition}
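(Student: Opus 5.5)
We pass from $\Sigma_w^\tBK$ to the $\G_w$-torsor $\hSigma_w^\tBK$ of Lemma \ref{torsor}, compute the cohomology upstairs using Proposition \ref{sheaf}, Lemma \ref{split} and Proposition \ref{square}, and then descend by taking $\G_w$-invariants. Here the statement is read as an equality of Euler characteristics: $\dim H_c^*$ means the alternating sum of dimensions.

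\textbf{Step 1 (descent).} Since $\hSigma_w^\tBK \to \Sigma_w^\tBK$ is a $\G_w$-torsor commuting with the $\tT_r^F$-action, we have $R\G_c(\Sigma_w^\tBK)[\tphi^+] = R\G_c(\hSigma_w^\tBK)[\tphi^+]^{\G_w}$. Similarly, $\hSigma_w^{\tL_0}$ is a $\G_w$-torsor over $\Sigma_w^{\tL_0}$, because it is the base change of $\k$. By Proposition \ref{sheaf}, the first complex equals $R\G_c(\hSigma_w^{\tL_0}\times\BH_{\Psi_w''}, h^*\CL_+)$ up to an even shift. The even shift does not change the Euler characteristic.

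\textbf{Step 2 (splitting off the quadratic part).} Pull back along the automorphism $\nu$. By Lemma \ref{split}, $\nu^*h^*\CL_+ = p_1^*c^*\CL_+$. Since $c$ is a sum of independent one-dimensional terms $\a^\vee(1+\varpi^{r_\a}d_\a z_\a^2)$ and $\CL_+$ is multiplicative, $c^*\CL_+$ is an external tensor product $\boxtimes_{\a\in\Psi_w''} f^*\CL_\a$. Here $\CL_\a$ is the Artin--Schreier-type sheaf on $\BG_a$ obtained by pulling back $\CL_+$ along $z\mapsto \a^\vee(1+\varpi^{r_\a}d_\a z)$.

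This sheaf is nontrivial. Indeed, $\a\in\Phi_{G^i}\sm\Phi_{G^{i-1}}$ and $\tphi$ is nontrivial on $M_{\a,r_{i-1}}$ by the lemma preceding Lemma \ref{extension}.

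By Künneth and Proposition \ref{square}, we get
\[R\G_c(\hSigma_w^{\tL_0}\times\BH_{\Psi_w''},\nu^*h^*\CL_+) \cong R\G_c(\hSigma_w^{\tL_0},\ov\BQ_\ell)\otimes V[-|\Psi_w''|],\]
where $V=\bigotimes_\a H^1_c(\BG_a,f^*\CL_\a)$ is one-dimensional.

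\textbf{Step 3 (the $\G_w$-action, the main obstacle).} The isomorphism $\nu$ is not $\G_w$-equivariant for the naive action. Transporting the action through $\nu$, $\g\in\G_w$ acts on $\hSigma_w^{\tL_0}$ as before and on each coordinate $z_\a$ by the sign $\a(\g)\in\{\pm1\}$.

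The key point is to determine how $z\mapsto -z$ acts on $H^1_c(\BG_a,f^*\CL)$. Decompose $f_*\ov\BQ_\ell=\ov\BQ_\ell\oplus \CK$, where $\CK$ is the Kummer sheaf of the quadratic character. The summand on which $-1$ acts trivially contributes $H^*_c(\BG_a,\CL)=0$, since $\CL$ is nontrivial. Hence $H^1_c(\BG_a,f^*\CL)=H^1_c(\BG_a,\CL\otimes\CK)$, and the sign $-1$ acts on it by $-1$.

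So $\G_w$ acts on $V$ by the character $\g\mapsto\prod_{\a\in\Psi_w''}\a(\g)$. One must then check that this character is compatible with the twist built into $\e_w$, or is absorbed into how $\G_w$ acts on $\hSigma_w^{\tL_0}$. This is the delicate bookkeeping point, and I would verify it by matching $\Psi_w''$ with the roots defining $n_w(t)$ in \S\ref{sec:proof-main}.

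With that compatibility, taking $\G_w$-invariants gives
\[R\G_c(\Sigma_w^\tBK)[\tphi^+]\cong R\G_c(\Sigma_w^{\tL_0})[-|\Psi_w''|]\]
up to even shifts. Taking Euler characteristics then yields the factor $(-1)^{|\Psi_w''|}$, as claimed.
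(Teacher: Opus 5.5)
Your Steps 1 and 2 follow the paper's argument: descent along the $\G_w$-torsor, Proposition \ref{sheaf}, twisting by $\nu$, Lemma \ref{split}, K\"unneth and Proposition \ref{square}. Your computation that $\G_w$ acts on the line $V$ by $\l_w(\g)=\prod_{\a\in\Psi_w''}\a(\g)$ is also correct. But Step 3, which you leave open, is where the remaining content lies, and the remedy you suggest points the wrong way. After Step 2 you must compute $\bigl(R\G_c(\hSigma_w^{\tL_0},\ov\BQ_\ell)\otimes V\bigr)^{\G_w}$. This is the $\l_w\i$-isotypic part of $R\G_c(\hSigma_w^{\tL_0},\ov\BQ_\ell)$, whereas $R\G_c(\Sigma_w^{\tL_0},\ov\BQ_\ell)$ is the \emph{invariant} part. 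Since $\l_w$ is in general nontrivial, knowing only that $\hSigma_w^{\tL_0}\to\Sigma_w^{\tL_0}$ is some $\G_w$-torsor does not identify the two. Matching $\l_w$ with $\e_w$ or with the roots counted by $n_w(t)$ cannot help. The right-hand side of the statement carries no twist, and $\e_w$ only enters later, in Proposition \ref{coh-BK}, through the fixed-point sets $\Psi_w''(t)$ and $\Phi_w''(t)$. What you need is a reason why the answer does not depend on $\l_w$ at all; your phrase ``absorbed into how $\G_w$ acts on $\hSigma_w^{\tL_0}$'' names this but gives no argument.

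The paper gets this by asserting that $\hSigma_w^{\tL_0}\to\Sigma_w^{\tL_0}$ is a trivial $\G_w$-torsor. Then $R\G_c(\hSigma_w^{\tL_0},\ov\BQ_\ell)\cong\bigoplus_\l R\G_c(\Sigma_w^{\tL_0},\ov\BQ_\ell)\otimes\l$, and tensoring with any character and taking invariants returns exactly one copy of $R\G_c(\Sigma_w^{\tL_0},\ov\BQ_\ell)$. Be aware that triviality is not automatic, since $\hCO_w\to\tCO_w$ is a nontrivial isogeny of tori. Take $\mathrm{SL}_2$ with $T$ diagonal, made anisotropic by twisting $F$ with the nontrivial Weyl element, $\th$ the transpose-inverse, $w=1$, $r=0$. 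A direct computation gives $\hSigma_w^{\tL_0}\cong\{(b,t);\ t\neq0,\ b^2=1-t^{2q+2}\}$ with $\G_w=\{\pm1\}$ acting by $t\mapsto -t$, which is a connected double cover. For the Euler-characteristic reading you adopt, the robust fix is different. $\G_w$ is a finite $2$-group acting freely on $\hSigma_w^{\tL_0}$, and $p$ is odd. So by the Deligne--Lusztig fixed-point formula, every $\g\neq1$ has $\sum_i(-1)^i\tr\bigl(\g,H^i_c(\hSigma_w^{\tL_0},\ov\BQ_\ell)\bigr)=0$. Hence, in the Grothendieck group of $\G_w$, $H_c^*(\hSigma_w^{\tL_0},\ov\BQ_\ell)$ equals $\dim H_c^*(\Sigma_w^{\tL_0},\ov\BQ_\ell)$ times the regular representation. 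Every isotypic part, in particular the $\l_w\i$-part, therefore has Euler characteristic $\dim H_c^*(\Sigma_w^{\tL_0},\ov\BQ_\ell)$. The sign $(-1)^{|\Psi_w''|}$ then comes from the degree shift, as you say.
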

\begin{proof}
    Let $m = |\Psi_w \sm \Psi_w'|$. By Lemma \ref{torsor} and Proposition \ref{sheaf}, we have \begin{align*}
        &\quad\ R\G(\Sigma_w^\tBK, \ov\BQ_\ell)[\tphi^+] \\ &\cong R\G(\hSigma_w^\tBK / \G_w, \ov\BQ_\ell)[\tphi^+] \\ &\cong (R\G(\hSigma_w^\tBK, \ov\BQ_\ell)[\tphi^+])^{\G_w} \\ &\cong R\G_c(\hSigma_w^{\tL_0} \times \BH_{\Psi_w''}, h^*\CL_+)^{\G_w}[2m] \\ &\cong R\G_c(\hSigma_w^{\tL_0} \times \BH_{\Psi_w''}, (h \circ \nu)^*\CL_+)^{\G_w}[2m] \\ &\cong R\G_c(\hSigma_w^{\tL_0} \times \BH_{\Psi_w''}, (p_1 \circ c)^*\CL_+)^{\G_w}[2m] \\ &\cong (R\G_c(\hSigma_w^{\tL_0}, \ov\BQ_\ell) \otimes R\G_c(\BH_{\Psi_w''}, c^*\CL_+))^{\G_w}[2m],
    \end{align*} where, by tracing the isomorphism $\nu$, the $\G_w$-actions on $R\G_c(\hSigma_w^{\tL_0}, \ov\BQ_\ell)$ and $R\G_c(\BH_{\Psi_w''}, c^*\CL_+)$ in the bottom line are induced from the $\G_w$-actions on $\hSigma_w^{\tL_0}$ and $\BH_{\Psi_w''}$ given by $\g: (x, y, \hpsi) \mapsto (x, y, \g\hpsi)$ and $\g: (z_\a)_\a \mapsto (\a(\g) z_\a)_\a$ respectively. 
    
    As $\hSigma_w^{\tL_0}$ is a trivial $\G_w$-torsor over $\Sigma_w^{\tL_0}$, there is an isomorphism of $\G_w$-modules \[R\G_c(\hSigma_w^{\tL_0}, \ov\BQ_\ell) \cong R\G_c(\Sigma_w^{\tL_0}, \ov\BQ_\ell) \otimes \ov\BQ_\ell \G_w \cong \bigoplus_\l R\G_c(\Sigma_w^{\tL_0}, \ov\BQ_\ell) \otimes \l,\] where $\l$ ranges over characters of $\G_w$.

    Now we compute the $\G_w$-module $R\G_c(\BH_{\Psi_w''}, c^*\CL_+)$. As $\CL_+$ is a multiplicative local system, it follows from Lemma \ref{split} that \[R\G_c(\BH_{\Psi_w''}, c^*\CL_+) \cong \bigotimes_{\a \in \Psi_w''} R\G_c(\BH_\a, c^*\CL_+).\] Applying Proposition \ref{square}, each $R\G_c(\BH_\a, c^*\CL_+)$ concentrates at degree one with dimension one. Hence $R\G_c(\BH_{\Psi_w''}, c^*\CL_+)$ concentrates at degree $|\Psi_w''|$ with dimension one. In particular, $\G_w$ acts on $R\G_c(\BH_{\Psi_w''}, c^*\CL_+)$ via a character $\l_w$. Therefore, \[(R\G_c(\hSigma_w^{\tL_0}, \ov\BQ_\ell) \otimes R\G_c(\BH_{\Psi_w''}, c^*\CL_+))^{\G_w} \cong R\G_c(\Sigma_w^{\tL_0}, \ov\BQ_\ell)[|\Psi_w''|]\] and the statement follows.    
\end{proof}

\section{Proof of Theorem \ref{tilde-formula}} \label{sec:proof-tilde}

In this section, we finish the proof of the main result. First we recall Lusztig's result in the $r=0$ case.
\begin{theorem} [{\cite[Proposition 6.3]{Lu90}}] \label{Euler-zero}
    For $t \in \tT_0^F$ we have \[H_c^*((\Sigma_w^{\tL_0})^t, \ov\BQ_\ell) = (-1)^{|\Phi_w''(t)|} |\tT_0^F| |(\tT_0^{\tth_w, \circ})^F|\i.\] Here $\Phi_w''(t) = \{\a \in \Phi_{G^0}^+; \a(t) = 1, \tth_w(\a) = -\a, F\i(\a) < 0\}$
\end{theorem}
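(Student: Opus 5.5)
The plan is to reduce the statement to the $r=0$ situation in \cite{Lu90}. Concretely, I would identify $\Sigma_w^{\tL_0}$ with the variety Lusztig attaches to the reductive group $\tL_0=\pi\i(L)$, its involution $\tiota$ and the orbit $\tT_0w\tG_0^{\tiota}$, and then import his Euler characteristic computation. Here $H_c^*$ is read as the alternating sum $\sum_i(-1)^i\dim H^i_c$, and I am taking $w$ with $F(w)\in w\tG_r^\tth$ as in \S\ref{subsec:BK}.

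\textbf{Step 1 (identification).} By Lemma \ref{lift-iso} and Lemma \ref{injection}, $\Sigma_w^{\tL_0}$ is the variety of triples $(x,y,\psi)\in F\bar\BL_U\times\bar\BL_U\times \tT_0/\tT_0^{\tth_w}$ with $F(\psi)=xy\psi$. As in Proposition \ref{intertwine} at level zero, this is the $\tL_0^F$-quotient of the piece of the classical Deligne–Lusztig variety of $\tL_0$ lying over the stratum $\tCC_w$ of $\tL_0/\tL_0^{\tiota}$ (cf. Proposition \ref{level-zero}). This matches Lusztig's variety in \cite[\S6]{Lu90}.

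\textbf{Step 2 (reduction to $t=1$).} For $t\in\tT_0^F$, the element $t$ acts by conjugation on $x,y$ and by translation on $\psi$. A fixed point forces $t\psi=\psi$, which means $t\in\tT_0^{\tth_\psi}$. It also forces $x,y$ to commute with $t$, so they lie in the unipotent radicals of the Borel subgroups of $Z^\circ_{\tL_0}(t)$. Hence $(\Sigma_w^{\tL_0})^t$ is the analogous variety for the group $Z^\circ_{\tL_0}(t)$, whose root system is $\{\a;\a(t)=1\}$. This uses Lemma \ref{stable}, which guarantees that $\tiota_w$ still restricts to an involution of $Z^\circ_{\tL_0}(t)$. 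Therefore it suffices to prove the formula for $t=1$ for an arbitrary connected reductive group, with $\Phi_w''$ the set of positive roots $\a$ satisfying $\tth_w(\a)=-\a$ and $F\i(\a)<0$.

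\textbf{Step 3 (case $t=1$).} I would stratify by $\psi$ and count. The $\psi$-coordinate must satisfy $F(\psi)\in U_0F(U_0)\psi$. Using the torus action argument of Lemma \ref{F-fixe}, the cohomology localizes at fixed points of the connected torus $\{s\in\tT_0; F(s)\i s\in\tT_0^{\tth_w}\}^\circ$. On this fixed locus, the $\psi$-coordinate ranges over a set of $|\tT_0^F|\,|(\tT_0^{\tth_w,\circ})^F|\i$ points, up to the component group $\G_w$. The fibre over each such point is then a product over roots. Roots with $\a\ne\pm\tth_w(\a)$ are paired off and give affine spaces, which contribute $+1$. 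Roots fixed by $\tth_w$ also give affine spaces. A root with $\tth_w(\a)=-\a$ reduces to an $\mathrm{SL}_2$ computation: if $F\i(\a)>0$ the fibre is an affine line, while if $F\i(\a)<0$ it is $\BG_m$ (or a line minus a point). The latter contributes a factor $-1$ after taking the $\G_w$-invariants. Multiplying these contributions gives $(-1)^{|\Phi_w''|}|\tT_0^F||(\tT_0^{\tth_w,\circ})^F|\i$.

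\textbf{Main obstacle.} The hard step is the rank-one analysis in Step 3. One must make the decomposition over roots rigorous, which I would do by filtering $U_0$ by root height as in Lemma \ref{parameter}. One must also check that the $\G_w$-twist does not change the sign. Here I would follow \cite[\S\S6--8]{Lu90} directly rather than redo the computation.
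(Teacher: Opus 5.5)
The paper gives no argument for Theorem \ref{Euler-zero}; it simply quotes \cite[Proposition 6.3]{Lu90}. So your Step 1, together with your closing deferral to Lusztig, is exactly the paper's route.

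Step 2 is also correct. It only uses $t\in\tT_0^{\tth_w}$, which already makes $Z_{\tL_0}(t)$ stable under $\tth_w$; Lemma \ref{stable} plays no role at depth zero. A minor precision for Step 1: $\Sigma_w^{\tL_0}$ is not literally the relevant piece of $\tL_0^F\backslash X_{\tL_0}$. It is an affine-space fibration over it, via $(x,y,\psi)\mapsto(F(y)x,\,y\psi)$, which does not change Euler characteristics.

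What would fail is Step 3, the only part of your proposal not delegated to \cite{Lu90}. Consider $H=\{s\in\tT_0;\ F(s)\i s\in\tT_0^{\tth_w}\}$. The Lang map of $\tT_0$ is an isogeny, and $F$ preserves $\tT_0^{\tth_w,\circ}$ because $F\tth_w=\tth_w F$; hence $H^\circ=\tT_0^{\tth_w,\circ}$. This torus acts trivially on $\psi\in\tT_0/\tT_0^{\tth_w}$. Localizing at it only forces $x,y$ into the Levi subgroup $Z_{\tL_0}(\tT_0^{\tth_w,\circ})$, whose roots are exactly those with $\tth_w(\a)=-\a$. It never makes the $\psi$-coordinate finite.

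The simplest case shows that what remains is not ``finitely many $\psi$'s times rank-one fibres''. Take $\tL_0=\mathrm{SL}_2$, $n=\begin{pmatrix}0&1\\-1&0\end{pmatrix}$, $F=\Ad(n)\circ F_0$ (so $F(U)=U^-$ and $|\tT_0^F|=q+1$), and $\tth_w=\Ad(n)$. Then $\tT_0^{\tth_w}=\{\pm1\}$, so there is no torus to localize with at all. Write $x=\begin{pmatrix}1&0\\v&1\end{pmatrix}$, $y=\begin{pmatrix}1&u\\0&1\end{pmatrix}$, and $\psi=s\,\tL_0^{\tth_w}$ with $s=\mathrm{diag}(a,a\i)$. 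The condition $F(s)\i xys\in\tL_0^{\tth_w}$ becomes $v=-ua^{2q-2}$ and $u^2c^{q-1}=1-c^{q+1}$, where $c=a^2$.

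Thus $\Sigma_w^{\tL_0}$ is a double cover of the entire $\psi$-line $\BG_m$, branched over $\mu_{q+1}$. Its Euler characteristic is $2\chi_c(\BG_m\sm\mu_{q+1})+(q+1)=-(q+1)$, so the sign and the factor $|\tT_0^F|$ arise together from the branch points. Your proposed rank-one factor could not produce the $-1$ in any case: $\chi_c(\BG_m)=0$, and a quotient of $\BG_m$ by a finite group has Euler characteristic $0$ or $1$.

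So Step 3 is not a valid sketch of Lusztig's computation and should be dropped. Your proof, like the paper's, rests entirely on the citation of \cite[Proposition 6.3]{Lu90}.
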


\begin{lemma} \label{orbit}
    For $w \in \Theta_{\tT_r}^F$ we have \[|\tT_r^F \backslash (\tT_r w \tG_r^{\tth})^F / (\tG_r^{\tth})^F| = |(\tT_0^{\tth_w})^F| \cdot |(\tT_0^{\tth_w, \circ})^F|\i.\]
\end{lemma}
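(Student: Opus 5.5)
We reduce the count to an orbit count in a torus via a Lang-type parametrization. Since $\tG_r^\tth$ is connected and $F$-stable, the $\tG_r^F$-orbits on $F$-stable points of the homogeneous space $\tG_r/\tG_r^\tth$ are governed by Lang's theorem. The $F$-fixed points of $\tT_r w \tG_r^\tth/\tG_r^\tth$ are exactly the $F$-fixed points of the image of the orbit $\tT_r w \tG_r^\tth$. The double coset count we want is therefore the number of $\tT_r^F$-orbits on $(\tT_r w\tG_r^\tth/\tG_r^\tth)^F$. Here we use that any $F$-fixed coset $x\tG_r^\tth$ contains an $F$-fixed representative, again by Lang's theorem for the connected group $\tG_r^\tth$.

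First we identify $\tT_r w\tG_r^\tth/\tG_r^\tth$ with $\tT_r/\tT_r^{\tth_w}$ via $t \mapsto tw\tG_r^\tth$. Indeed, $t w \tG_r^\tth = t' w\tG_r^\tth$ iff $t^{-1}t' \in w\tG_r^\tth w^{-1}\cap \tT_r = \tT_r^{\tth_w}$. Since $w$ is $F$-fixed, this identification is $F$-equivariant and $\tT_r^F$-equivariant, and $\tth_w$ commutes with $F$ on $\tT_r$. The problem becomes computing $|\tT_r^F \backslash (\tT_r/\tT_r^{\tth_w})^F|$, which equals $|(\tT_r/\tT_r^{\tth_w})^F| / |\tT_r^F/(\tT_r^{\tth_w})^F|$ because $\tT_r^F$ acts through the abelian quotient with stabilizer $(\tT_r^{\tth_w})^F$.

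Next we compute $|(\tT_r/\tT_r^{\tth_w})^F|$. Write $K = \tT_r^{\tth_w}$ with identity component $K^\circ$. Since $T_{0+:r}^{\tth_w}$ is connected (it is an affine space by the Lemma~\ref{parameter}-type argument, $q$ odd), we have $K/K^\circ \cong \tT_0^{\tth_w}/\tT_0^{\tth_w,\circ}$. By Lang, $(\tT_r/K^\circ)^F = \tT_r^F/(K^\circ)^F$. The map $(\tT_r/K^\circ)^F \to (\tT_r/K)^F$ has fibers that are torsors under $(K/K^\circ)^F$ (one $F$-coset-torsor per point). Its image is characterized by the vanishing of a class in $H^1(F, K/K^\circ)$; for the finite abelian group $A = K/K^\circ$ we have $|H^1(F,A)| = |A^F|$. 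Combining yields $|(\tT_r/K)^F| = |\tT_r^F|/|(K^\circ)^F|$. Dividing by $|\tT_r^F|/|K^F|$ gives $|K^F|/|(K^\circ)^F|$. Finally, $K = T^{\tth_w}_{0+:r}\rtimes \tT_0^{\tth_w}$ and $K^\circ = T^{\tth_w}_{0+:r}\rtimes \tT_0^{\tth_w,\circ}$, with the unipotent factor connected. Hence the ratio is $|(\tT_0^{\tth_w})^F|/|(\tT_0^{\tth_w,\circ})^F|$, as claimed.

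The main obstacle is the Galois-cohomology bookkeeping. A cleaner route avoids it: apply Lang's theorem to the connected group $K^\circ$ to see that $(\tT_r/K)^F$ is in bijection with the $F$-twisted conjugacy classes (here just $A/(1-F)A$-twisted orbits) and count $|\tT_r^F|\cdot |A^F|/|(K^\circ)^F|\cdot |A|^{-1}\cdot|A/(F-1)A|$. Using $|A/(F-1)A| = |A^F|$ for a finite abelian group makes the factors cancel to the displayed formula. I would also double check that the first reduction, from double cosets of $F$-fixed points to $F$-fixed double cosets, uses connectedness of $\tG_r^\tth$ exactly once and consistently.
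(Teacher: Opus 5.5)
Your main line is correct and is essentially the paper's proof. It has the same four steps: Lang's theorem for the connected group $\tG_r^\tth$, the identification of $\tT_r w \tG_r^\tth/\tG_r^\tth$ with $\tT_r/\tT_r^{\tth_w}$, the count $|K^F|/|(K^\circ)^F|$ of $\tT_r^F$-orbits on its $F$-points, and the passage to level zero via connectedness of $T_{0+:r}^{\tth_w}$. The only difference is that the paper quotes Lemma 7.4 of [Lu90] for the orbit count, while you rederive it. Your ``combining'' step is sound because $1 \to A^F \to (\tT_r/K^\circ)^F \to (\tT_r/K)^F \to A/(F-1)A \to 1$ is an exact sequence of abelian groups, surjective at the end by Lang for the connected group $\tT_r/K^\circ$.

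Drop the closing ``cleaner route'', which is misstated. It is the set of $\tT_r^F$-orbits on $(\tT_r/K)^F$, not $(\tT_r/K)^F$ itself, that is in bijection with $A/(F-1)A$; this gives $|A^F|$ orbits at once and is precisely the cited lemma. Also, the product you display in that paragraph does not simplify to the claimed value.
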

\begin{proof}
    As $\tG_r^{\tth}$ is connected, by \cite[Lemma 7.4]{Lu90} we have \begin{align*}  &\quad\ |\tT_r^F \backslash (\tT_r w \tG_r^{\tth})^F / (\tG_r^{\tth})^F| = |\tT_r^F \backslash (\tT_r w \tG_r^{\tth} / \tG_r^{\tth})^F| \\ &=  |(\tT_r^{\tth_w})^F / (\tT_r^{\tth_w, \circ})^F| =  |(\tT_0^{\tth_w} / \tT_0^{\tth_w, \circ})^F| = |(\tT_0^{\tth_w})^F| \cdot |(\tT_0^{\tth_w, \circ})^F|\i.\end{align*}
\end{proof}

\begin{lemma} \label{symm}
    For $w \in \Theta_{\tT_r, \tphi, +}^F$ we have $\tth_w(\Phi_{G^i}) = \Phi_{G^i}$ for $0\le i \le d$.
\end{lemma}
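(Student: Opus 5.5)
We argue by downward induction on $i$, using the genericity of the Howe factorization. Recall that $w \in \Theta_{\tT_r,\tphi,+}^F$ means that $\tphi$ is trivial on $(T_{0+:r}^{\tth_w})^F$. Since $G^d = G$, the case $i=d$ is trivial. Suppose $\tth_w(\Phi_{G^{i}}) = \Phi_{G^{i}}$ for some $1 \le i \le d$; we show $\tth_w(\Phi_{G^{i-1}}) = \Phi_{G^{i-1}}$.

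Assume not. Since $\tth_w$ is an involution preserving $\Phi_{G^i}$, there is $\b \in \Phi_{G^i} \sm \Phi_{G^{i-1}}$ with $\a := \tth_w(\b) \in \Phi_{G^{i-1}}$. The plan is to produce an element of $(T_{0+:r}^{\tth_w})^F$ on which $\tphi$ is nontrivial, contradicting $w \in \Theta_{\tT_r,\tphi,+}^F$. Use the coweight $\l = \a^\vee + \b^\vee$. Note that $\tth_w$ acts on $X_*(T)$ compatibly with its action on roots (it normalizes $T$ and commutes with $F$). Hence $\tth_w(\l) = \l$, so $\l(1+\varpi^{r_{i-1}}\ov\BF_q[[\varpi]]) \subseteq T_{0+:r}^{\tth_w}$. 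Consequently $M_{\l, r_{i-1}}$, the image of the norm map $x \mapsto xF(x)\cdots F^{N-1}(x)$, lies in $(T_{0+:r}^{\tth_w})^F$, because $F$ commutes with $\tth_w$ (we may use $F(w) \in w\tG_r^\tth$, or else work with $F$-stable $\tth_w$-fixed tori directly).

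We then evaluate $\tphi$ on $M_{\l,r_{i-1}}$ exactly as in the proof of Proposition \ref{cut}. The norm is multiplicative in $\l$, so $M_{\l,r_{i-1}}$ is the product of the images of $M_{\a,r_{i-1}}$ and $M_{\b,r_{i-1}}$ under the corresponding norms. The character $\tphi$ is trivial on $M_{\a, r_{i-1}}$: the factors $\phi_j$ with $j \ge i$ have depth $r_j \ge r_{i-1}$ and are trivial on $G^j_\der \supseteq$ image of $\a^\vee$, and $\a \in \Phi_{G^{i-1}}$. By the earlier lemma on Howe factorizations, $\tphi$ is nontrivial on $M_{\b, r_{i-1}}$. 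Hence $\tphi(M_{\l, r_{i-1}}) = \tphi(M_{\b,r_{i-1}}) \ne \{1\}$, which gives the contradiction. Induction then yields the claim for all $0 \le i \le d$; the case $i = 0$ comes from the step with $i=1$.

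The main obstacle is making precise that $\tphi$ is trivial on $M_{\a, r_{i-1}}$ for $\a \in \Phi_{G^{i-1}}$ while nontrivial on $M_{\b,r_{i-1}}$. This requires checking, via Kaletha's construction, the depth of each factor $\phi_j$ restricted to $(T^{j}_\der)$-cocharacter images. For $\a \in \Phi_{G^{i-1}}$ the relevant factors are $\phi_j$ with $j \le i-2$, which have depth $< r_{i-1}$; the factors with $j \ge i-1$ kill $\a^\vee$ since it lies in $G^{i-1}_\der \subseteq G^j_\der$. A subsidiary point is verifying that $\tth_w$ commutes with $F$ on $T$, so that the norm subgroup really is $\tth_w$-fixed. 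If this fails, I would instead replace $\l$ by the $F$-orbit sum, which is still $\tth_w$-fixed.
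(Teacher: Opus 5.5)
Your argument is correct, but it is not how the paper proves this lemma. The paper argues in one stroke. For $\tau \in T_{0+:r}^F$ the element $\tau\,\tth_w(\tau)$ lies in $(T_{0+:r}^{\tth_w})^F$, so the hypothesis gives $\phi_+ = \tth_w(\phi_+^{-1})$ on $T_{0+:r}^F$. Since $\phi_+^{-1}$ has a Howe factorization with the same Levi sequence $(G^i)_i$, its $\tth_w$-transport has Levi sequence $(\tth_w(G^i))_i$. Uniqueness of the Levi sequence in Kaletha's construction then gives $\tth_w(\Phi_{G^i}) = \Phi_{G^i}$ for all $i$ at once, with no induction.

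You instead produce, by downward induction, an explicit witness $M_{\a^\vee+\b^\vee, r_{i-1}} \subseteq (T_{0+:r}^{\tth_w})^F$, exactly as in the first half of the proof of Proposition \ref{cut}. You detect it using two facts: $\tphi$ is trivial on $M_{\a, r_{i-1}}$ for $\a \in \Phi_{G^{i-1}}$, and nontrivial on $M_{\b, r_{i-1}}$ for $\b \in \Phi_{G^i} \sm \Phi_{G^{i-1}}$. Together these say that $\Phi_{G^{i-1}}$ is cut out inside $\Phi_{G^i}$ by triviality of $\tphi$ on the corresponding norm subgroups. So you are in effect reproving by hand the relevant part of the uniqueness statement.

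The paper's route is shorter but treats that uniqueness (and transport of structure along $\tth_w$) as a black box. Yours uses only the paper's own nontriviality lemma for $M_{\b, r_{i-1}}$ plus an elementary depth/derived-subgroup check. The price is the induction, which you need so that $\b$ can be chosen inside $\Phi_{G^i}$.

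Two small remarks. Your last paragraph gives the right version of the triviality check: $\phi_j$ for $j \ge i-1$ vanish on $G^{i-1}_\der$, while $\phi_{-1}$ and $\phi_j$ for $j \le i-2$ vanish by depth. This corrects the sentence in the main body that only mentions $j \ge i$. Also, the fallback for $F$ is unnecessary: $w$ is $F$-fixed and $\tth = \d \cdot \tiota$ is $F$-rational, so $F\tth_w = \tth_w F$ directly.
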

\begin{proof}
    By definition, $\tphi$ is trivial over $(T_{0+:r}^{\tth_w})^F$. In particular, $\tphi(\t \tth_w(\t)) = 1$ for all $\t \in T_{0+:r}^F$, that is, $\phi_+ = \tphi_+ = \tth_w(\tphi_+\i) = \tth(\phi_+\i)$. Then the statement follow from the uniqueness of the Levi subgroups sequence in the Howe factorizations of $\phi_+$ and $\phi_+\i$, which coincide with each other by the construction in \cite[\S 3.6]{Kal}.
\end{proof}

\begin{proposition} \label{coh-BK}
    Let $w \in \Theta_{\tT_r}$. We have $\dim H_c^*(\tSigma_w, \ov\BQ_\ell)[\tphi] = 0$ unless $w \in \Theta_{\tT_r, \tphi}^F$, in which case it equals $(-1)^{n_w} |\tT_r^F \backslash (\tT_r w \tG_r^{\tth})^F / (\tG_r^{\tth})^F|$.
\end{proposition}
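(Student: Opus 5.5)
First I would dispose of the vanishing cases and reduce to depth zero. By Lemma \ref{trivial}, $H_c^*(\tSigma_w,\ov\BQ_\ell)[\tphi]=0$ unless $w\in\Theta_{\tT_r,\tphi,+}^F$ (after replacing $w$ within $\tT_r w\tG_r^\tth$ so that $F(w)\in w\tG_r^\tth$, which Lemma \ref{F-fixe} together with Lemma \ref{Theta} allows). In that case Lemma \ref{symm} gives $\tth_w(\Phi_{G^i})=\Phi_{G^i}$ for all $i$ automatically, and then $H_c^*(\tSigma_w)[\tphi]=H_c^*(\Sigma_w^\tBK)[\tphi]$. Next I split $\tphi$ according to $\tT_r^F=\tT_0^F\times T_{0+:r}^F$. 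The $T_{0+:r}^F$-isotypic part is handled by Proposition \ref{red-to-zero}, which should in fact be tracked $\tT_0^F$-equivariantly: the isomorphisms in Lemma \ref{torsor}, Proposition \ref{sheaf} and Lemma \ref{split} commute with the $\tT_0^F$-action $t_s:\hpsi\mapsto t_s\hpsi$. This action moves $\hpsi$ and hence the signs $\a(\hpsi)$ entering $\nu$. Consequently $H_c^*(\Sigma_w^\tBK)[\tphi^+]$ is $(-1)^{|\Psi_w''|}H_c^*(\Sigma_w^{\tL_0})$ twisted by the character through which $\tT_0^{F}$ (via $\G_w$) acts on the one-dimensional space $R\G_c(\BH_{\Psi_w''},c^*\CL_+)$.

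The second step is to identify that twisting character. By Proposition \ref{square} each factor $R\G_c(\BH_\a,c^*\CL_+)$ is one-dimensional in degree one, and $z_\a\mapsto -z_\a$ acts on it by $-1$: the invariant part $R\G_c(\BG_a,\ov\BQ_\ell\otimes\CL)$ computed via $f_*$ vanishes, while the sign part carries the full cohomology. Hence $t\in(\tT_0^{\tth_w})^F$ acts by $\prod_{\a\in\Psi_w''}\a(t)$. I would then match $\Psi_w''$ with the roots in $\tPhi_{\phi,w}$ for $i\ge1$: these are roots $\a\in\Phi_{G^i}\sm\Phi_{G^{i-1}}$ at level $s_{i-1}\in\BZ$ with $\tth_w(\a)=-\a$ and $F^{-1}(\a)<0$, since membership in $\Psi_w'$ requires $\a\in\Phi^-\cap F\Phi^+$. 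This gives $|\Psi_w''|=n_w-|\Phi''_w(1)|$, and the sign character equals $t\mapsto(-1)^{(n_w-|\Phi_w''(1)|)+(n_w(t)-|\Phi_w''(t)|)}$. This matching of combinatorics is the step I expect to be the main obstacle, notably checking that $\a$ with $n_f=s_{i-1}$ and the constraint $F\i(\a)<0$ correspond exactly, and that the signs $\a(\hpsi)$ compose correctly with the torsor action.

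Finally I take $\tphi|_{\tT_0^F}$-isotypic parts using Theorem \ref{Euler-zero}. By the Lefschetz-type character formula, the $\tT_0^F$-character of $H_c^*(\Sigma_w^{\tL_0})$ is supported on $(\tT_0^{\tth_w})^F$, with value $(-1)^{|\Phi_w''(t)|}|\tT_0^F||(\tT_0^{\tth_w,\circ})^F|^{-1}$ at $t$. Multiplying this by the sign character from the second step, the combined function of $t$ is $(-1)^{n_w}\e_w(t)$ times that constant. Pairing with $\tphi$ over $\tT_0^F$ then gives $0$ unless $\tphi|_{(\tT_0^{\tth_w})^F}=\te_w$, that is, unless $w\in\Theta_{\tT_r,\tphi}^F$ (combined with the level-$0+$ condition). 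In that case the result is $(-1)^{n_w}|(\tT_0^{\tth_w})^F|\,|(\tT_0^{\tth_w,\circ})^F|^{-1}$, which by Lemma \ref{orbit} equals $(-1)^{n_w}|\tT_r^F\backslash(\tT_rw\tG_r^\tth)^F/(\tG_r^\tth)^F|$.
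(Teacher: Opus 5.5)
Your proposal is correct, but the middle step takes a different route from the paper. The outer steps agree with the paper: the reduction via Lemma \ref{trivial} and Lemma \ref{symm}, then Theorem \ref{Euler-zero}, orthogonality on $(\tT_0^{\tth_w})^F$, and Lemma \ref{orbit}.

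\textbf{The paper's route.} It never makes Proposition \ref{red-to-zero} equivariant. It writes the trace of $t_st_u$ on $H_c^*(\Sigma_w^\tBK)$ as the trace of $t_u$ on $H_c^*((\Sigma_w^\tBK)^{t_s})$. It then applies Proposition \ref{red-to-zero} to the fixed-point variety $(\Sigma_w^\tBK)^{t}$, where only the $z_\a$ with $\a(t)=1$ survive. So the sign $(-1)^{|\Psi_w''(t)|}$ appears at once, and the character $\l_w$ of $\G_w$ on the Gauss-sum line never has to be identified.

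\textbf{Your route.} You apply Proposition \ref{red-to-zero} only to $\Sigma_w^\tBK$ itself and track $t\in(\tT_0^{\tth_w})^F$ through it. Such a $t$ acts as translation by $\bar t\in\G_w$ on $\hpsi$, which is trivial on $\G_w$-invariants, composed with a conjugation $c_t$ that fixes $\hpsi$. After $\nu$, $c_t$ acts on $\BH_{\Psi_w''}$ by $z_\a\mapsto\a(t)z_\a$. It therefore acts by $\prod_{\a\in\Psi_w''}\a(t)$ on the line $R\G_c(\BH_{\Psi_w''},c^*\CL_+)$, because $f_*\ov\BQ_\ell=\ov\BQ_\ell\oplus\CK$ ($\CK$ the quadratic Kummer sheaf extended by zero) and $z\mapsto -z$ acts by $-1$ on $\CK$. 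Since $(-1)^{|\Psi_w''|}\prod_{\a\in\Psi_w''}\a(t)=(-1)^{|\Psi_w''(t)|}$ on $(\tT_0^{\tth_w})^F$, you recover the paper's sign.

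\textbf{What each route costs and buys.}
\begin{itemize}
\item The paper's argument needs, and uses without comment, that the whole reduction of \S\ref{sec:first-reduction}--\S\ref{sec:coho-BK} is compatible with passing to $t$-fixed points, i.e.\ to centralizers. It only needs the Gauss-sum line to be one-dimensional.
\item Yours invokes Proposition \ref{red-to-zero} once but needs two extra inputs. One is the explicit quadratic character on $R\G_c(\mathbb{G}_a,f^*\CL)$.
\item The other is a short Lefschetz check. You must show that $c_t$ has the same trace on the $\l_w$-isotypic part of $R\G_c(\hSigma_w^{\tL_0})$ as $t$ on $R\G_c(\Sigma_w^{\tL_0})$. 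This holds because only $\g=1$ gives fixed points of $c_t\g$. The non-equivariant trivial-torsor isomorphism in the proof of Proposition \ref{red-to-zero} does not give this by itself.
\item In return, your route explains the positive-depth part of $\e_w$: it is the $\G_w$-character on a tensor product of quadratic Gauss-sum lines.
\end{itemize}

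On the matching you flagged: $\Psi_w''\subseteq\Phi^-\cap F\Phi^+$, so the bijection with the $i\ge1$ part of $\tPhi_{\phi,w}$ is $\a\mapsto-\a$. This is harmless, since $\a(t)=\pm1$ on $\tT_0^{\tth_w}$ and so the counts with $\a(t)=1$ agree.
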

\begin{proof}
     By Lemma \ref{trivial} and Lemma \ref{symm}, we have $\dim H_c^*(\tSigma_w, \ov\BQ_\ell)[\tphi] = 0$ unless $w \in \Theta_{\tT_r, \tphi, +}^F$, in which case $\dim H_c^*(\tSigma_w, \ov\BQ_\ell)[\tphi] = \dim H_c^*(\Sigma_w^\tBK, \ov\BQ_\ell)[\tphi]$. Now we assume $w \in \Theta_{\tT_r, \tphi, +}^F$, that is, $\tphi|_{(\tT_r^{\tth_w})^F}$ factors through $\tphi|_{(\tT_0^{\tth_w})^F}$. Let $t \in \tT_0^F$. We denote by \[(\Sigma_w^\tBK)^t = \{(x, y, z, \t, \psi) \in \Sigma_w^\tBK; txt\i=x, tyt\i=y, tzt\i=z, t\psi =\psi\}\] the set of $t$-fixed points in $\Sigma_w^\tBK$. Let $\Psi_w''(t) = \{\a \in \Psi_w; \a(t) = 1\}$. Then we have
    \begin{align*}
        &\quad \ \dim H_c^*(\tSigma_w, \ov\BQ_\ell)[\tphi] \\ &=\dim H_c^*(\Sigma_w^\tBK, \ov\BQ_\ell)[\tphi]  \\ &= |\tT_r^F|\i\sum_{t \in \tT_r^F} \tphi(t\i) \tr(t; H_c^*(\Sigma_w^\tBK, \ov\BQ_\ell)) \\ &= |\tT_0^F|\i \sum_{t_s \in \tT_0^F} \tphi(t_s\i) |\tT_{0+:r}^F|\i \sum_{t_u \in \tT_{0+:r}^F} \tphi(t_u\i) \tr(t_u; H_c^*((\Sigma_w^\tBK)^{t_s}, \ov\BQ_\ell)) \\  &= |\tT_0^F|\i \sum_{t_s \in \tT_0^F} \tphi(t_s\i) \dim H_c^*((\Sigma_w^\tBK)^{t_s}, \ov\BQ_\ell)[\tphi^+] \\  &= |\tT_0^F|\i \sum_{t \in \tT_0^F} \tphi(t\i) (-1)^{|\Psi_w''(t)|} H_c^*((\Sigma_w^{\tL_0})^t, \ov\BQ_\ell) \\ &= |\tT_0^F|\i \sum_{t \in (\tT_0^{\tth_w})^F} \tphi(t\i) (-1)^{|\Psi_w''(t)|} H_c^*((\Sigma_w^{\tL_0})^t, \ov\BQ_\ell) \\ &= |\tT_0^F|\i \sum_{t \in (\tT_0^{\tth_w})^F} \tphi(t\i) (-1)^{|\Psi_w''(t)| + |\Phi_w''(t)|} |\tT_0^F| |(\tT_0^{\tth_w, \circ})^F|\i  \\ &= |(\tT_0^{\tth_w, \circ})^F|\i (-1)^{n_w} \sum_{t \in (\tT_0^{\tth_w})^F} \tphi(t\i) \e_w(t), \\ &=\begin{cases}
            (-1)^{n_w} |(\tT_0^{\tth_w})^F| \cdot |(\tT_0^{\tth_w, \circ})^F|\i, &\text{ if } \e_w = \tphi|_{(\tT_0^{\tth_w})^F}; \\
            0, &\text{ otherwise, }
        \end{cases}
    \end{align*} where the fifth equality follows from Proposition \ref{red-to-zero} (applied to $(\Sigma_w^\tBK)^t$); the sixth equality follows that $(\Sigma_w^{\tL_0})^t = \emptyset$ if $t \notin (\tT_0)^{\tth_w}$; the seventh equality follows from Theorem \ref{Euler-zero}; the eighth equality follows from the equality $n_w(t) = |\Phi_w''(t)| + |\Psi_w''(t)|$. The statement then follows from Lemma \ref{orbit}.
\end{proof}

Now we are ready to finishe the proof of Theorem \ref{tilde-formula}.
\begin{proof}
By Proposition \ref{coh-BK} we have
\begin{align*} &\quad\ \frac{1}{|(\tG_r^\tth)^F|}\sum_{g \in (\tG_r^\tth)^F} \tr(g; R_{\tT_r}^{\tG_r}(\tphi)) \\ &=\dim \Hom_{\tG_r^\tth)^F}(R_{\tT_r}^{\tG_r}(\tphi), 1) \\ &= \sum_{w \in \tT_r \backslash \Theta_{\tT_r} / \tG_r^\tth} \dim H_c^*(\tSigma_w, \ov\BQ_\ell)[\tphi] \\ &= \sum_{w \in \tT_r \backslash \Theta_{\tT_r} / \tG_r^\tth, \ w \in  \Theta_{\tT_r, \tphi}^F} \dim H_c^*(\tSigma_w, \ov\BQ_\ell)[\tphi] \\ &= \sum_{w \in \tT_r \backslash \Theta_{\tT_r} / \tG_r^\tth, \ w \in  \Theta_{\tT_r, \tphi}^F} (-1)^{n_w} |\tT_r^F \backslash (\tT_r w \tG_r^{\tth})^F / (\tG_r^{\tth})^F| \\  &= \sum_{w \in  \tT_r^F \backslash \Theta_{\tT_r, \tphi}^F / (\tG_r^{\tth})^F} (-1)^{n_w}.\end{align*}  So the proof is finished.
\end{proof}

\end{document}